\theoremstyle{plain}
	\newcommand{\sn}{\mathbb{S}^{n-1}}
\newtheorem{thrm}{Theorem}[section]
\newtheorem{lemma}[thrm]{Lemma}
\newtheorem{prop}[thrm]{Proposition}
\newtheorem{rmrk}[thrm]{Remark}
\newtheorem{dfn}[thrm]{Definition}
\begin{document}
\newcommand{\blue}[1]{\textcolor{blue}{#1}}
\newcommand{\SL}{\mathcal L^{1,p}( D)}
\newcommand{\Lp}{L^p( Dega)}
\newcommand{\CO}{C^\infty_0( \Omega)}
\newcommand{\Rn}{\mathbb R^n}
\newcommand{\Rm}{\mathbb R^m}
\newcommand{\R}{\mathbb R}
\newcommand{\Om}{\Omega}
\newcommand{\Hn}{\mathbb H^n}
\newcommand{\aB}{\alpha B}
\newcommand{\eps}{\ve}
\newcommand{\BVX}{BV_X(\Omega)}
\newcommand{\p}{\partial}
\newcommand{\IO}{\int_\Omega}
\newcommand{\bG}{\boldsymbol{G}}
\newcommand{\bg}{\mathfrak g}
\newcommand{\bz}{\mathfrak z}
\newcommand{\bv}{\mathfrak v}
\newcommand{\Bux}{\mbox{Box}}
\newcommand{\e}{\ve}
\newcommand{\X}{\mathcal X}
\newcommand{\Y}{\mathcal Y}
\newcommand{\W}{\mathcal W}
\newcommand{\la}{\lambda}
\newcommand{\vf}{\varphi}
\newcommand{\rhh}{|\nabla_H \rho|}
\newcommand{\Ba}{\mathcal{B}_\gamma}
\newcommand{\Za}{Z_\beta}
\newcommand{\ra}{\rho_\beta}
\newcommand{\na}{\nabla_\beta}
\newcommand{\vt}{\vartheta}
\newcommand{\La}{\mathcal{L}}

\numberwithin{equation}{section}

\newcommand{\RN} {\mathbb{R}^N}
\newcommand{\Sob}{S^{1,p}(\Omega)}
\newcommand{\Dxk}{\frac{\partial}{\partial x_k}}
\newcommand{\Co}{C^\infty_0(\Omega)}
\newcommand{\Je}{J_\ve}
\newcommand{\beq}{\begin{equation}}
\newcommand{\bea}[1]{\begin{array}{#1} }
\newcommand{\Dt}{\partial_{t}}
\newcommand{\eeq}{ \end{equation}}
\newcommand{\ea}{ \end{array}}
\newcommand{\eh}{\ve h}
\newcommand{\Dxi}{\frac{\partial}{\partial x_{i}}}
\newcommand{\Dyi}{\frac{\partial}{\partial y_{i}}}
\newcommand{\aBa}{(\alpha+1)B}
\newcommand{\GF}{\psi^{1+\frac{1}{2\alpha}}}
\newcommand{\GS}{\psi^{\frac12}}
\newcommand{\HFF}{\frac{\psi}{\rho}}
\newcommand{\HSS}{\frac{\psi}{\rho}}
\newcommand{\HFS}{\rho\psi^{\frac12-\frac{1}{2\alpha}}}
\newcommand{\HSF}{\frac{\psi^{\frac32+\frac{1}{2\alpha}}}{\rho}}
\newcommand{\AF}{\rho}
\newcommand{\AR}{\rho{\psi}^{\frac{1}{2}+\frac{1}{2\alpha}}}
\newcommand{\PF}{\alpha\frac{\psi}{|x|}}
\newcommand{\PS}{\alpha\frac{\psi}{\rho}}
\newcommand{\ds}{\displaystyle}
\newcommand{\Zt}{{\mathcal Z}^{t}}
\newcommand{\XPSI}{2\alpha\psi \begin{pmatrix} \frac{x}{|x|^2}\\ 0 \end{pmatrix} - 2\alpha\frac{{\psi}^2}{\rho^2}\begin{pmatrix} x \\ (\alpha +1)|x|^{-\alpha}y \end{pmatrix}}
\newcommand{\Z}{ \begin{pmatrix} x \\ (\alpha + 1)|x|^{-\alpha}y \end{pmatrix} }
\newcommand{\ZZ}{ \begin{pmatrix} xx^{t} & (\alpha + 1)|x|^{-\alpha}x y^{t}\\
     (\alpha + 1)|x|^{-\alpha}x^{t} y &   (\alpha + 1)^2  |x|^{-2\alpha}yy^{t}\end{pmatrix}}
\newcommand{\norm}[1]{\lVert#1 \rVert}
\newcommand{\ve}{\varepsilon}
\newcommand{\ee}{e^{-\frac{\beta}{2} (\log \sigma)^{2}}}
\newcommand{\eer}{e^{\beta (\log \sigma_{1}(r_1/2, r_1, T))^2}}
\newcommand{\ees}{e^{\beta (\log (\sigma_{1}(r_2, r_3, T))^2}}
\newcommand{\suplog}{e^{\beta (\log (\sup\limits_{(r_1, r_2)}\sigma(x,t)))^2}}
\newcommand{\eesr}{e^{\beta \bigg[(\log (\sigma_1(r_1/2, r_1, T)))^2-(\log (\sigma_2(r_1, {M r_2}/{N}, T)))^2\bigg]}}
\newcommand{\eelog}{e^{\beta \bigg[(\log (\sigma_1(r_2, r_3, T)))^2-(\log (\sigma_2(r_1, {M r_2}/{N}, T)))^2\bigg]}}
\title{}
\title[Carleman estimates and unique continuation]{ strong unique continuation  for  variable coefficient parabolic operators with Hardy type potential}
\author{Agnid Banerjee}
	\address{Tata Institute of Fundamental Research\\
		Centre For Applicable Mathematics \\ Bangalore-560065, India}\email[Agnid Banerjee]{agnid@tifrbng.res.in}
	\author{Pritam Ganguly}
	\address{Department of Mathematics\\ Indian Institute of Science\\ Bangalore-560012, India}\email[Pritam Ganguly]{pritam1995.pg@gmail.com}
	
	\author{Abhishek Ghosh}
\address{Tata Institute of Fundamental Research\\
		Centre For Applicable Mathematics \\ Bangalore-560065, India}\email[Abhishek Ghosh]{abhi21@tifrbng.res.in}

\thanks{A.B is supported in part by  Department of Atomic Energy,  Government of India, under
project no.  12-R \& D-TFR-5.01-0520.}
\thanks{P.G is supported in part by the J.C. Bose fellowship of Prof. S. Thangavelu, grant no. DSTO-2036.}


%
%
%
\keywords{}
\subjclass{35A02, 35B60, 35K05}

\maketitle

\tableofcontents

\begin{abstract}
In this paper, we prove the strong unique continuation property at the origin for  solutions of the following scaling critical  parabolic differential inequality 
		\[
			|\operatorname{div} (A(x,t) \nabla u) - u_t| \leq \frac{M}{|x|^{2}} |u|,\ \ \ \
		\]
		 where the coefficient matrix $A$ is Lipschitz continuous in $x$ and $t$. Our main result sharpens a previous one of Vessella concerned with the subcritical case as well as extends a recent  result of one of us with Garofalo and Manna  for the heat operator.
		
\end{abstract}

\section{Introduction}
Roughly speaking, a differential operator $P$ is said to have the strong unique continuation property (sucp) if a solution $u$ to $Pu=0$  vanishes to infinite order at a point  in a connected domain, then $u \equiv 0$. The study of unique continuation problems has  its root in an old paper of Carleman written in 1939, in which, Carleman studied the unique continuation problem associated with the Schr\"odinger operator $H=-\Delta+V$ with bounded potential $V$ in $\R^2$ by means of  a weighted  apriori  estimate. Such an estimate was subsequently extended to variable coefficient elliptic operators with Lipschitz principal part in \cite{AKS}, see also \cite{A} where the case of $C^{2}$ coefficients was earlier considered.   We refer to some of  the later prominent works \cite{ ChS, EF, Ho, JK, KT}   in this subject  and an interested reader can find other references therein. An alternate method based on the almost monotonicity of a generalization of the frequency function, first introduced by Almgren in \cite{Al} came up in the work of Garofalo and Lin in \cite{GL}.  Using this approach, they were able to obtain new quantitative uniqueness information for the solutions to divergence form elliptic equations with Lipschitz coefficients which in particular encompassed the results  in \cite{AKS}. We recall that unique continuation fails in general when the coefficients are only H\"older continuous, see for instance \cite{Pl, Mi}.

 In this paper, we obtain strong unique continuation property at the origin for solutions to the differential inequality
\begin{align}
    |\operatorname{div} (A(x,t) \nabla u) - u_t| \leq \frac{M}{|x|^{2}} |u|,
    \label{main-diff-ineq}
\end{align}
where $M>0$ and   the matrix valued function $A$ satisfies the following uniform ellipticity and the Lipschitz growth condition
\begin{equation}\label{assump}
\begin{cases}
\Lambda^{-1} \mathbb I\leq A(x,t) \leq \Lambda \mathbb I\ \text{for some $\Lambda >1$},
\\
|A(x,t) - A(y,s)| \leq K(|x-y| + |t-s|).
\end{cases}
\end{equation}

 It is well-known that  the inverse-square potential $V(x) = \frac{M}{|x|^2}$ represents a critical scaling threshold in quantum mechanics \cite{BG}, and it is equally known that its singularity is the limiting case for the sucp for the differential inequality $|\Delta u| \le \frac{M}{|x|^m} |u|$, see the counterexample in \cite{GL}. Such potential fails to be in $L^{n/2}_{loc}$, and in general does not have small $L^{n/2,\infty}$ seminorm, thus in the context of the Laplacian the sucp cannot be treated by the celebrated result of Jerison and Kenig in \cite{JK} or the subsequent improvement by Stein in the appendix to the same paper. 
We recall that, in the time-independent case of the Laplacian, the sucp for the unrestricted inverse square potential was proved by Pan in \cite{Pa}. This was later extended to Lipschitz principal part  by Regbaoui  in \cite{Reg}. We refer to \cite{LNW} for quantitative results in this setting and also to  \cite{BM} for  a similar result in the subelliptic framework of a subclass of  Gr\"ushin type operators.

Now in the parabolic setting, in \cite{V} (see also \cite{EV}) Vessella proved a general sucp result for sub-critical parabolic equations of the type 
\begin{equation}\label{v}
|\operatorname{div} (A(x,t) \nabla u) - u_t| \leq \frac{M}{|x|^{2-\delta}} |u|,\ \ \ \ \delta>0,
\end{equation}
under the same  Lipschitz regularity assumptions on the principal part $A(x,t)$ as in \eqref{assump} above.   Recently,  one of us with Garofalo and Manna in \cite{BGM} proved the sucp for solutions to the  scaling critical differential inequality \eqref{main-diff-ineq} in the case when $A=\mathbb{I}$. This was done  by means of an improved Carleman estimate in the case of the heat operator $\Delta -\partial_t$ in space-time cylinders.  Similar to the time-independent case in \cite{Pa} and \cite{Reg}, the proof of such a scaling critical Carleman estimate in \cite{BGM} exploited  the spectral gap on $\sn$ combined with another delicate apriori estimate  which was an important new feature in the parabolic  setting.  The purpose of this work  is to  extend the sucp result in \cite{BGM} to parabolic operators with Lipschitz principal part. Our main result Theorem \ref{thm1-bgg} constitutes the parabolic counterpart of  the one due to Regbaoui in \cite{Reg}.
\medskip

The following  are the key steps in the proof of our main result.

\medskip

\emph{Step1:}  Inspired by ideas  in \cite{Reg},  we first establish a sharpened version of the scaling critical Carleman estimate in \cite{BGM}  ( see \eqref{est1} below). This has required some delicate  reworking of the ideas in \cite{BGM}. 
Using such an estimate combined with a suitable change of variable in the time-dependent setting, we show that  if a solution $u$ to \eqref{main-diff-ineq} vanishes to infinite order at the origin  in the sense of \eqref{vp} below, then it decays exponentially. See Proposition \ref{expdecay} below.

\medskip

\emph{Step 2:} Then  by means of a parabolic generalization of a Carleman estimate due to  Regbaoui in \cite{Reg}( see \eqref{har1-BGG} below), we then show    that non-trivial solutions to  \eqref{main-diff-ineq} in fact decay less than exponentially which  then lead to a contradiction and  thus establishes our main result Theorem \ref{thm1-bgg}.  We mention that the proof of the corresponding estimate in \cite{Reg} uses in a crucial way the polar decomposition of the frozen constant coefficient operator combined with a  H\"ormander type commutator estimate.   Our proof in the parabolic case instead is based on  a suitable adaptation of a  Rellich type identity and is partly inspired by some ideas in  the recent work \cite{BMa} where a similar estimate has been established for  Gr\"ushin type operators. However as the reader will see, our proof entails some  subtle modifications  in the time dependent setting.  This is in fact one of the key novelties of our work.

The paper is organized as follows. In Section \ref{s:p}, we introduce some relevant notions, gather some known results and then state our main strong unique continuation result.  In Section \ref{s:main}, we prove our main result.

\section{Preliminaries}\label{s:p}
Given $r>0$ we  denote by $B_r(x_0)$ the Euclidean ball centred at  $x_0 \in \R^n$ with radius $r$. When $x_0=0$, we will use the simpler notation $B_r$.   A generic point in space time $\Rn \times (0, \infty)$ will be denoted by $(x,t)$. For notational convenience, $\nabla f$ and  $\operatorname{div}\ f$ will respectively refer to the quantities  $\nabla_x f$ and $ \operatorname{div}_x f$ of a given function $f$.   The partial derivative in $t$ will be denoted by $\p_t f$ and also by $f_t$. We indicate with $C_0^{\infty}(\Omega)$ the set of compactly supported smooth functions in the region $\Omega$  in space-time. By $H^{2,1}_{loc}(\Omega)$ we refer to the parabolic Sobolev class of functions $f\in L^2_{loc}(\Om)$ for which the weak derivatives $\nabla f, \nabla^2 f $ and $\partial_t f$ belong to $L^{2}_{loc}(\Om)$. For a point $x\in \Rn\setminus\{0\}$, we will routinely adopt the notation $r = r(x) = |x|$ and $\omega = \frac{x}{r}\in \sn$, so that $x = r \omega$. The radial derivative of a function $v$ is 
$v_r = <\nabla v,\frac{x}{|x|}>$.

The relevant notion of vanishing to infinite order is as follows.

\begin{dfn}
We say a function $u$ parabolically vanishes to infinite order if for all $k>0$, we have
\begin{equation}\label{vp}
\int_{B_r \times (0, T)} u^2 = O(r^k),\, \text{as}\,\, r \to 0.
\end{equation}    
\end{dfn}

\subsection*{Statement of the main result}
We now  state our main result.
\begin{thrm}\label{thm1-bgg}
Suppose that for some $M, R, T>0$ the function $u \in H^{2,1}_{loc}$ be a solution in $B_R \times (0, T)$ to the  differential inequality \eqref{main-diff-ineq}. If $u$ parabolically vanishes to infinite order at the origin in the sense of \eqref{vp}, then $u \equiv 0$ in $B_R \times (0, T)$. 
\end{thrm}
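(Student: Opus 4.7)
The plan is to argue by contradiction. Suppose $u \not\equiv 0$ in $B_R \times (0,T)$ while $u$ parabolically vanishes to infinite order at the origin in the sense of \eqref{vp}. Exploiting the Lipschitz regularity of $A$, and after a preliminary linear change of variables that does not affect the structure of \eqref{main-diff-ineq}, we may assume $A(0,0) = \mathbb I$. Writing
$$\operatorname{div}(A(x,t)\nabla u) - u_t \;=\; \Delta u - u_t + \operatorname{div}\bigl((A(x,t)-\mathbb I)\nabla u\bigr),$$
the perturbation term is pointwise bounded by $C(|x|+|t|)|\nabla^2 u| + C|\nabla u|$, which is subcritical relative to the Hardy potential $M|x|^{-2}|u|$ near $(0,0)$; after multiplying by a cutoff supported near the origin we may thus work with a flat principal part up to controlled error.

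First I would invoke the sharpened scaling-critical Carleman estimate \eqref{est1}, which refines the one of \cite{BGM} by carrying enough powers of the large parameter $\tau$ on the left-hand side to absorb both the Hardy potential and the Lipschitz perturbation terms. Applying it to $\eta u$ for a suitable space-time cutoff $\eta$, and using \eqref{vp} to kill boundary contributions as $\tau\to\infty$, one derives an exponential decay estimate of the type $\int_{B_r\times(0,T)} u^2 \le C e^{-c/r^\gamma}$ for some $\gamma>0$ and small $r$. A time-dependent change of variable (of the sort appearing in \cite{BGM, V}) lets one convert the usual parabolic Carleman weight into one with a purely spatial singular factor, so that the spectral-gap argument on $\mathbb S^{n-1}$ applies just as in the elliptic setting, while the Lipschitz-in-$t$ behaviour of $A$ contributes only subcritical commutator terms.

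Next I would establish the parabolic Carleman estimate \eqref{har1-BGG} of Regbaoui type, which provides a lower bound on the decay rate of nontrivial solutions. In \cite{Reg} this is obtained via a polar decomposition of the frozen constant-coefficient operator together with a H\"ormander commutator estimate; in our time-dependent setting, following the strategy of \cite{BMa}, I would instead test the equation against a radial multiplier of the form $\phi(r)\, x\cdot\nabla u$ in the weighted space $L^2\bigl(B_r\times(0,T); e^{2\tau\psi}\, dx\, dt\bigr)$, and rearrange the resulting Rellich-type identity to extract a coercive bulk term bounding $\int e^{2\tau\psi}|u|^2$ from below. Applying this estimate to $u$ and combining it with the exponential decay from Step 1, one chooses $\tau$ so that the right-hand side tends to zero while the lower bound forces $u \equiv 0$, contradicting our standing assumption.

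The principal obstacle is the parabolic adaptation of the Rellich identity in Step 2: the time-dependence of $A$ and the $t$-derivative of the Carleman weight generate commutators involving $\partial_t A \cdot \nabla u \cdot \nabla u$ and $\partial_t \psi \, |u|^2$ which interact with the Hardy potential precisely at the critical scale $|x|^{-2}$. Balancing these against the radial multiplier, while retaining sharpness at the critical exponent and using only Lipschitz regularity of $A$, is the delicate point; this is exactly where the paper's novelty over \cite{Reg} and \cite{BMa} lies, and the weight $\psi$ must be carefully chosen so that the coercive and error terms balance after integrating by parts both in $x$ and in $t$.
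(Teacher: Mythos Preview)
Your overall two-step contradiction strategy matches the paper, but there is a genuine gap in how you handle the variable coefficients. You propose to normalize $A(0,0)=\mathbb I$ and then treat $\operatorname{div}((A(x,t)-\mathbb I)\nabla u)$ as a perturbation bounded by $C(|x|+|t|)|\nabla^2 u|+C|\nabla u|$, asserting this is subcritical ``near $(0,0)$''. The problem is that the parabolic vanishing hypothesis \eqref{vp} is only in the spatial variable: the integrals are over $B_r\times(0,T)$ with $T$ fixed, and the time cutoff $\eta$ in the Carleman argument must cover essentially the whole interval $(-T_1,T_1)$ with $T_1=3T/4$. Thus $|t|$ is of order $T$, not small, and the error $|t|\,|\nabla^2 u|$ is $O(1)|\nabla^2 u|$ --- not absorbable by the left side of \eqref{est1} (whose $|\nabla^2 u|$ term carries the small factor $\alpha^{-3}$). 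Your later mention of ``a time-dependent change of variable'' does not repair this, because you have already committed to the flat principal part $\Delta$ and the spatial weight $|x|$.

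The paper's remedy is to freeze only in space: one sets $P(t)=A(0,t)^{1/2}$ and changes variables by $y=P(t)x$, which converts $\Delta_x$ into $\operatorname{Tr}(A(0,t)\nabla_y^2)$ and turns $|x|$ into the \emph{time-dependent} metric weight $\sigma(y,t)=\bigl(\sum g_{ij}(0,t)y_iy_j\bigr)^{1/2}$. The residual perturbation is then $A(y,t)-A(0,t)=O(|y|)$, genuinely small on the support of the spatial cutoff, plus a first-order drift $C(t,y)\cdot\nabla v$ with $|C(t,y)|=O(|y|)$ coming from $\partial_t P$. Both are absorbable provided one restricts the spatial radius to $R_0\le c_0\alpha^{-3/2}$; this is Lemma~\ref{carlemanL} and yields the decay $e^{-C/s^{2/3}}$ in Proposition~\ref{expdecay}. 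Correspondingly, in Step~2 the Rellich multiplier cannot be $x\cdot\nabla$ but must be the adapted field $Z=\sigma\sum a_{ij}\partial_i\sigma\,\partial_j$, and the Carleman weight in \eqref{har1-BGG} is $e^{\beta(\log\sigma)^2}$ rather than $e^{\beta(\log|x|)^2}$. The time-dependence of $\sigma$ then generates the extra terms $\partial_t\sigma=O(\sigma)$ that you correctly anticipate as the delicate point, but these only become tractable once the weight is $\sigma$ rather than $|x|$.
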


\begin{rmrk}\label{H2}
We  remark that even if we assume that  $u \in H^{1}_{loc}$ ( i.e. $u, \nabla u \in L^2_{loc}$)  is a weak solution to \eqref{main-diff-ineq} satisfying parabolic vanishing property \eqref{vp},  then  it follows that $u \in H^{2,1}_{loc}$ and moreover  we have that $\nabla u, \nabla^2 u$ and  $u_t$  vanish to infinite order in the sense of \eqref{vp} above. This is seen as follows. Recall the following estimate from \cite[Chapter 6]{Liberman}
\begin{align}
\label{H2-estminate}
\int_{Q_{2r} (0, t_0)\setminus Q_{r}(0, t_0)} \left(u_{t}^{2}+|\nabla^2 u|^2\right)\lesssim \frac{1}{r^4} \int_{Q_{4r} (0,t_0) \setminus Q_{r} (0, t_0)} u^2,    
\end{align}
for any $t_0 \in (-3T/4, 3T/4)$.
 Since $u$ vanishes parabolically to infinite order, we conclude
\begin{align}
\label{H2-estminate-1}
\int_{Q_{2r}(0, t_0)\setminus Q_{r} (0, t_0)} \left(u_{t}^{2}+|\nabla^2 u|^2\right)\lesssim r^k,  
\end{align}
for all $k \geq 1$.
From \eqref{H2-estminate-1} it follows
\begin{equation}\label{h22}
\int_{B_{2r} \setminus B_r \times (-3T/4, 3T/4)} \left(  u_t^2 + |\nabla^2 u|^2 \right) \lesssim r^k.\end{equation}
 Now for any $r<1$, recursively applying \eqref{h22} with $r_{i}=\frac{r}{2^i}$ for all $i\geq 1$, then summing over the annular regions we obtain the following
\begin{align*}
 \int_{B_{r}\times (-3T/4, 3T/4)} \left(u_{t}^{2}+|\nabla^2 u|^2\right)= O(r^k),   
\end{align*}
for all $k\geq 1$. Moreover  by applying Moser type estimate  in annular regions, it also follows that  a solution $u$ to \eqref{main-diff-ineq} satisfying \eqref{vp} also decays to infinite order in the $L^{\infty}$ norm in the following sense
\begin{equation}\label{vp1}
||u||_{L^{\infty}(B_r \times (-3T/4, 3T/4)}  = O(r^k),
\end{equation}
for all $k \geq 1$. Therefore one only requires $u \in H^{1}_{loc}$ in  Theorem \ref{thm1-bgg} above.
\end{rmrk}

We close this section with the  following  elementary algebraic  inequality that will be needed  in the proof of  Theorem \ref{thm2} below.
\begin{lemma}\label{alineq}
Given $a, b \in \R^n$ and $\delta>0$, the following inequality holds
\begin{equation}\label{ai1}
(1+\delta) |a|^2 + |b|^2 + 2\langle a, b\rangle \geq C(\delta) ( |a|^2 + |b|^2),
\end{equation}
for some $C(\delta) >0$. 
\end{lemma}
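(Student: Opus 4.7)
The plan is to exploit the algebraic identity
\[
(1+\delta)|a|^2 + |b|^2 + 2\langle a, b\rangle = \delta|a|^2 + |a+b|^2,
\]
which rewrites the left-hand side as a manifestly nonnegative sum, and then to extract from this a strictly positive lower bound in terms of $|a|^2 + |b|^2$.

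The cleanest route is a homogeneity and compactness argument. Since the inequality is homogeneous of degree two in $(a,b)$, it suffices to produce a positive constant lower bound for $\delta|a|^2 + |a+b|^2$ on the unit sphere $\{(a,b) : |a|^2 + |b|^2 = 1\}$ in $\mathbb{R}^n \times \mathbb{R}^n$. That set is compact and the functional is continuous, so the infimum is attained; moreover it can vanish only when simultaneously $a = 0$ and $a + b = 0$, forcing $a = b = 0$, which is excluded on the sphere. Therefore the infimum is strictly positive and defines an admissible $C(\delta)$.

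For an explicit value, I would instead apply Cauchy--Schwarz to bound $2\langle a, b\rangle \geq -2|a||b|$ and reduce matters to the scalar inequality
\[
(1+\delta)s^2 - 2st + t^2 \geq C(s^2 + t^2), \qquad s = |a|,\ t = |b| \geq 0.
\]
The associated symmetric matrix $\begin{pmatrix} 1+\delta-C & -1 \\ -1 & 1-C \end{pmatrix}$ is positive semidefinite precisely when $C^2 - (2+\delta)C + \delta \geq 0$, yielding the explicit admissible choice
\[
C(\delta) = \frac{(2+\delta) - \sqrt{4+\delta^2}}{2} > 0.
\]

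There is no substantive obstacle in this argument; the only care required is to track the degenerate locus $a + b = 0$ in the compactness step, where the identity shows the expression still picks up the nontrivial contribution $\delta |a|^2 = \delta/2$ on the unit sphere.
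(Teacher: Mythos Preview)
Your proof is correct, but proceeds along a different route from the paper's. The paper applies Young's inequality in the weighted form
\[
2\langle a,b\rangle \geq -\Bigl(1+\tfrac{\delta}{2}\Bigr)|a|^2 - \frac{1}{1+\tfrac{\delta}{2}}|b|^2,
\]
which after substitution immediately yields the explicit (though non-sharp) constant $C(\delta)=\dfrac{\delta}{2+\delta}$. Your approach instead rewrites the left side via the identity $\delta|a|^2+|a+b|^2$ and then either invokes compactness on the unit sphere (clean, but non-constructive) or reduces to a $2\times 2$ eigenvalue problem, obtaining the sharp constant $C(\delta)=\tfrac{1}{2}\bigl((2+\delta)-\sqrt{4+\delta^2}\bigr)$. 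The paper's argument is a one-line computation tailored to produce some explicit $C(\delta)$ with minimal effort; your spectral route requires a bit more bookkeeping but delivers the optimal value, while your compactness argument gives existence of $C(\delta)$ without quantifying it. For the purposes of the paper, where the lemma is used only qualitatively (any positive $C(\delta)$ suffices), all three arguments are equally serviceable.
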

\begin{proof}
By an application of Cauchy-Schwartz inequality, we have
\begin{equation}\label{ai0}
2\langle a, b\rangle \geq  - \left(1 +\frac{\delta}{2}\right) |a|^2 - \frac{1}{1+\frac{\delta}{2}} |b|^2.
\end{equation}
Using \eqref{ai0} we find
\begin{align}
&(1+\delta) |a|^2 + |b|^2 + 2\langle a, b\rangle \geq \frac{\delta}{2} |a|^2 + \frac{\delta}{2(1+\frac{\delta}{2})} |b|^2\\
& \geq \frac{\delta}{2(1+\frac{\delta}{2})}  (|a|^2 +|b|^2).\notag
\end{align}

Thus the  inequality \eqref{ai1} is seen to hold with $C(\delta)=  \frac{\delta}{2(1+ \frac{\delta}{2}) }.$

\end{proof}
	
	\section{ Proof of the main result}\label{s:main}
	\subsection{Carleman estimate I}
	We first state and prove our first Carleman estimate with singular weights which can be regarded as a certain sharpened version of Theorem 1.1 in \cite{BGM}. The difference of our estimate as in \eqref{est1} below  from that in \cite[Theorem 1.1]{BGM} is the incorporation of higher order terms. This   is needed  in order to  absorb certain error terms that arises due to the perturbation of the principal part.  Such an estimate  in conjunction with the infinite  vanishing property \eqref{vp} leads to the exponential decay of solutions to \eqref{main-diff-ineq}.  As the reader will see, this requires  some subtle  reworking of the proof of Theorem 1.1 in \cite{BGM}.
	\begin{thrm} \label{thm2}
		Let $R<1$ and let  $u \in C_0^{\infty}((B_R \setminus \{0\}) \times (0,T))$. For all $\alpha$ sufficiently large of the form $\alpha= k +\frac{n+1}{2}$ with $k \in \mathbb N$, and every $0<\ve<<1$ very small, we have 
		\begin{align}\label{est1}
			&\alpha\int_{B_R \times (0,T)} |x|^{-2\alpha-4} e^{2\alpha |x|^{\ve}} \, u^2  dxdt  +\alpha^3\int_{B_R \times (0, T)} |x|^{-2\alpha- 4+\ve} e^{2\alpha |x|^{\ve}}  u^2  \\  
			&+\frac{1}{\alpha}\int_{B_R \times (0,T)} |x|^{-2\alpha-2} e^{2\alpha |x|^{\ve}} \, |\nabla u|^2  dxdt+\frac{1}{\alpha^3}\int_{B_R \times (0,T)} |x|^{-2\alpha} e^{2\alpha |x|^{\ve}} \, |\nabla^2u|^2  dxdt\nonumber\\
			&\leq C\int_{B_R \times (0, T)} |x|^{-2\alpha} e^{2\alpha |x|^{\ve}} (\Delta u - \partial_t u)^2   dx dt \nonumber
		\end{align}
		where $C= C(\ve, n)>0$.
	\end{thrm}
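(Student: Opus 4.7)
The plan is to follow the blueprint of \cite[Theorem 1.1]{BGM} for the scaling-critical Carleman estimate with weight $|x|^{-\alpha}e^{\alpha|x|^{\ve}}$, but to sharpen it so that $|\nabla u|^2$ and $|\nabla^2 u|^2$ appear on the left hand side with the claimed prefactors $\alpha^{-1}$ and $\alpha^{-3}$. These higher-order terms are precisely what is needed in Step~1 of the overall SUCP argument in order to absorb the error produced by freezing $A(x,t)$ at $A(0,t_0)$, so the emphasis is on tracking the $\alpha$-powers carefully, not on finding a fundamentally new weight.

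The core estimate (on $u$ alone) is obtained by a conjugation. Setting $\sigma(x) = |x|^{-\alpha} e^{\alpha|x|^{\ve}}$ and $v = \sigma u$, the conjugated operator $P = \sigma(\Delta-\p_t)\sigma^{-1}$ splits as $P = S + A$ into its symmetric and antisymmetric parts with respect to the natural $L^2$ pairing, and the fundamental identity
\begin{equation*}
\int |Pv|^2 \, dx dt = \int (Sv)^2 \, dxdt + \int (Av)^2 \, dxdt + \int [S,A]v \cdot v \, dxdt
\end{equation*}
is the starting point. Passing to logarithmic radial coordinates $r=e^{-s}$ and expanding $v(s,\omega,t) = \sum_k v_k(s,t) Y_k(\omega)$ in spherical harmonics on $\sn$ makes the scale-critical part of the weight autonomous in $s$ and decouples the frequencies. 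The spectral gap $\lambda_k = k(k+n-2)$ on $\sn$, together with the diophantine choice $\alpha = k+(n+1)/2$, guarantees strict positivity of $[S,A]$ in each mode, and summing up recovers the $u^2$-type terms on the left hand side of \eqref{est1} (the $\alpha$-weighted $|x|^{-4}$ bound coming from the $|x|^{-\alpha}$ part of the weight, the $\alpha^3$-weighted $|x|^{-4+\ve}$ bound coming from the $e^{\alpha|x|^{\ve}}$ correction).

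To bring $|\nabla u|^2$ into the picture, I test the equation $Lu := \Delta u - \p_t u$ against $|x|^{-2\alpha-2} e^{2\alpha|x|^{\ve}} u$ and integrate by parts in $x$ (the $\p_t u$ piece contributes no time-boundary term since $u$ has compact support in $(0,T)$):
\begin{equation*}
\int |x|^{-2\alpha-2} e^{2\alpha|x|^{\ve}} |\nabla u|^2 \, dxdt = -\int |x|^{-2\alpha-2} e^{2\alpha|x|^{\ve}} u\, Lu \, dxdt + (\text{weight-derivative terms}).
\end{equation*}
Cauchy--Schwarz on the first term distributes it between the RHS of \eqref{est1} and a $u^2$-weighted piece, while the weight-derivative terms scale like $\alpha^2 \int |x|^{-2\alpha-4} e^{2\alpha|x|^{\ve}} u^2$ and are absorbed by the $u^2$ bound of the previous paragraph at the cost of dividing by $\alpha$; this is the origin of the $\alpha^{-1}$ prefactor in \eqref{est1}. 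For the Hessian term I integrate by parts twice to obtain $\int \sigma^2 (\Delta u)^2 = \int \sigma^2 |\nabla^2 u|^2 +(\text{lower order})$, then use $(\Delta u)^2 \leq 2(Lu)^2 + 2u_t^2$ and handle the $u_t^2$ contribution via a separate energy identity obtained by testing against $\sigma^2 u_t$ and integrating in $t$. The heavier $\alpha^{-3}$ price reflects the additional powers of $\alpha$ carried by the weight-derivative errors at second order.

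The main obstacle is the bookkeeping at the commutator step. The additive correction $e^{\alpha|x|^{\ve}}$ on top of the pure scaling-critical factor $|x|^{-\alpha}$ generates numerous cross terms in $[S,A]$ that were absent in the original BGM setup, and one must verify that these contribute a genuinely positive term of order $\alpha^3 |x|^{-4+\ve}$ (which is the source of the new second term on the LHS of \eqref{est1}) while not destroying the positivity of the $\alpha|x|^{-4}$ commutator that was already there. Lemma~\ref{alineq} with $\delta$ proportional to $\alpha\ve$ is tailor-made for completing the square between diagonal and cross contributions in this computation, and the hypotheses $\ve \ll 1$ and $\alpha = k + (n+1)/2$ with $k \in \mathbb{N}$ are precisely what make the positivity margin close uniformly across all spherical harmonic modes.
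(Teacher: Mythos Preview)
Your outline is a plausible alternative route, but it is \emph{not} the one the paper takes, and you have misidentified where Lemma~\ref{alineq} enters.

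\textbf{What the paper actually does.} The paper does not bootstrap $|\nabla u|^2$ and $|\nabla^2 u|^2$ a posteriori from a pre-existing zero-order Carleman estimate. Instead it re-runs the BGM computation with a different $a/b$ split in $(a+b)^2\ge a^2+2ab$: the term $r^2 v_{rr}$ is kept in $a$ (not in $b$), so that $a^2$ produces, in addition to the spectral piece $(\beta(\beta+n-2)v+\Delta_{\sn}v)^2$, genuine second-order quantities $\int r^{-n+4}v_{rr}^2$, $\int r^{-n+4}|\nabla_T v_r|^2$, $\int r^{-n}(\Delta_{\sn}v)^2$. The term $I$ is then estimated in two different ways --- once via the spectral gap and once by brute expansion of the square --- and a weighted combination of the two (this is the idea borrowed from Regbaoui) yields simultaneously the $\alpha^2 v^2$, $|\nabla v|^2$, and $\alpha^{-2}|\nabla^2 v|^2$ bounds. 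Only at the very end are these converted from $v$ back to $u$, and \emph{that} is where Lemma~\ref{alineq} is used: with a fixed $\delta$ (e.g.\ $\delta=1/2$), it absorbs the cross terms coming from $\nabla v = r^{-\beta}e^{\alpha r^\ve}\nabla u + (\text{weight-derivative})\cdot u$, and similarly at second order. The $u_t^2$ term is not handled by testing against $\sigma^2 u_t$; it is controlled by the separate BGM Lemma~\ref{L:del}, which already gives $\frac{d}{\alpha}\int r^{-2\alpha}e^{2\alpha r^\ve}u_t^2$ together with the crucial $\alpha^3\int r^{-2\alpha-4+\ve}e^{2\alpha r^\ve}u^2$.

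\textbf{Where your sketch diverges.} Your bootstrap strategy --- take BGM's zero-order estimate as a black box, then test against $w'u$ for $|\nabla u|^2$, then relate $|\nabla^2 u|^2$ to $(\Delta u)^2$ and control $u_t^2$ separately --- can be made to work with the claimed $\alpha$-powers, but it is a different argument and you should be aware that the ``lower order'' terms in each step land \emph{exactly} at the scale of the previous step (e.g.\ the weight-derivative error in the gradient identity is $C\alpha^2\int r^{-2\alpha-4}e^{2\alpha r^\ve}u^2$, the same $\alpha$-power as the BGM zero-order term), so nothing is absorbed by an $\alpha$-margin; everything hinges on chaining the estimates in the right order. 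Your description of Lemma~\ref{alineq} (``with $\delta$ proportional to $\alpha\ve$ \dots\ for completing the square between diagonal and cross contributions in $[S,A]$'') does not match its actual use; in your bootstrap approach you would not need that lemma at all. Also, the commutator / logarithmic-coordinate formalism you invoke is not what BGM or the present paper use; they both work directly in polar coordinates with the ad hoc $(a+b)^2\ge a^2+2ab$ decomposition.
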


 We  record the following simple lemma which can be regarded as an integration by parts formula for radial derivatives,  will be repeatedly used in our analysis. See for instance Lemma 2.1  \cite{BGM}. 	\begin{lemma}\label{L:radial}
		Let $f\in C^\infty_0(\Rn\setminus\{0\})$, $g\in C^\infty(\Rn\setminus\{0\})$, then
		\[
		\int_{\Rn} f_r g dx = - \int_{\Rn} f g_r dx - (n-1) \int_{\Rn} r^{-1} f g dx.
		\]
	\end{lemma}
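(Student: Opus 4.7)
The plan is to view the identity as an integration-by-parts in the radial direction, and realize it via a single application of the divergence theorem to the vector field $\mathbf V := fg\,\tfrac{x}{|x|}$ on $\R^n$. Since $f\in C_0^\infty(\R^n\setminus\{0\})$, the product $fg$ is smooth and compactly supported away from the origin, so $\mathbf V$ is a smooth compactly supported vector field on $\R^n\setminus\{0\}$; in particular there are no boundary contributions (neither at infinity nor on any small sphere around the origin), and the integral $\int_{\R^n}\operatorname{div}\mathbf V\,dx$ vanishes identically.

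Next I would compute $\operatorname{div}\mathbf V$ via the product rule. The preliminary computation is $\operatorname{div}(x/|x|)=(n-1)/|x|$, which follows from $\operatorname{div}(x)=n$ together with $\nabla(|x|^{-1})=-x/|x|^3$ and the product rule $\operatorname{div}(\varphi X)=\varphi\,\operatorname{div}X+\langle\nabla\varphi,X\rangle$. Applying the product rule once more gives
\[
\operatorname{div}\mathbf V \;=\; fg\,\operatorname{div}\!\Bigl(\tfrac{x}{|x|}\Bigr)\,+\,\Bigl\langle \tfrac{x}{|x|},\nabla(fg)\Bigr\rangle \;=\; \tfrac{n-1}{|x|}\,fg \,+\, f_r\,g \,+\, f\,g_r,
\]
where I used that $\langle x/|x|,\nabla h\rangle=h_r$ for any smooth $h$. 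Integrating this identity over $\R^n$ and using $\int\operatorname{div}\mathbf V\,dx=0$ yields
\[
\int_{\R^n} f_r g\,dx \;=\; -\int_{\R^n} f g_r\,dx \;-\;(n-1)\int_{\R^n} r^{-1} fg\,dx,
\]
which is precisely the claimed formula.

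There is essentially no obstacle; the only point requiring a moment's care is the justification that no boundary terms appear. This is where the hypothesis $f\in C_0^\infty(\R^n\setminus\{0\})$ (as opposed to merely $C_0^\infty(\R^n)$) is used: it keeps $\mathbf V$ away from the singularity of $x/|x|$ at the origin, so the divergence theorem applies on any sufficiently large annulus containing $\operatorname{supp} f$ with vanishing boundary integrals. As an alternative route, I could instead pass to polar coordinates $x=r\omega$, write $dx=r^{n-1}dr\,d\sigma(\omega)$, and integrate by parts in $r$ on $(0,\infty)$; the factor $r^{n-1}$ differentiates to produce the $(n-1)r^{-1}$ term, yielding the same identity. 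Either approach is short and self-contained.
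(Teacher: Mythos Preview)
Your proof is correct; the divergence theorem applied to $\mathbf V = fg\,x/|x|$ is exactly the right device, and your handling of the support condition to avoid boundary terms at the origin is clean. The paper itself does not supply a proof of this lemma---it simply cites \cite{BGM}, Lemma 2.1---so there is nothing to compare against here, but your argument (either via the divergence theorem or the equivalent polar-coordinates integration by parts you sketch) is the standard one and would be accepted without issue.
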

	
		\begin{proof}[Proof of Theorem \ref{thm2}] 
		 Before we proceed with the proof, in order to avoid any notational confusion, we declare that, in what follows, the domain of all the integrations will be the parabolic cylinder $B_R \times (0,T)$ (or, for that matter, the whole of $\Rn\times \R$, in view of the support property of the integrands), but this will not be explicitly indicated. Nor  we will explicitly write the measure $dx dt$ in any of the integrals involved.  Let $r=|x|$ and  also let $v=r^{-\beta} e^{\alpha r^\ve}  u$, where as in \cite{BGM}, $\beta$ is chosen in the following way
		 \begin{equation}\label{rel}
		 2 \beta - 2 \alpha -4 =-n.
		 \end{equation}

		  Thus  $u= r^{\beta} e^{-\alpha r^\ve} v$ and an easy calculation yields
		\[
		\Delta(r^{\beta} e^{-\alpha r^\ve})=\left(\alpha^2 \ve^2 r^{\beta+2\ve-2} + \beta(\beta+n-2)r^{\beta-2}- \alpha\ve\left((2\beta+\ve+ n-2)\right) r^{\beta+\ve-2}\right) e^{-\alpha r^{\ve}}
		\]
		leading to
		\begin{align*}
			\Delta u & =r^{\beta} e^{-\alpha r^\ve}  \Delta v+\left(\alpha^2 \ve^2 r^{\beta+2\ve-2} + \beta(\beta+n-2)r^{\beta-2}- \alpha\ve\left((2\beta+\ve+ n-2)\right) r^{\beta+\ve-2}\right) e^{-\alpha r^{\ve}} v
			\\
			& + \left(2\beta r^{\beta-2} -2 \ve \alpha r^{\beta+\ve-2}\right) e^{-\alpha r^{\ve}} <x, \nabla v>.
		\end{align*}
		Since $\Delta v(x,t) =v_{rr}(r\omega,t)+\frac{n-1}{r} v_r(r\omega,t)+\frac{1}{r^2} \Delta_{\sn} v(r\omega,t),$
		where $\omega\in \sn$ and $\Delta_{\sn}$ denotes the Laplacian on $\mathbb{S}^{n-1}$,  we obtain
		\begin{align}\label{comp1}
			& \Delta u -\partial_tu = r^\beta e^{-\alpha r^{\ve}} \bigg[\left(\alpha^2 \ve^2 r^{2\ve-2} + \beta(\beta+n-2)r^{-2}- \alpha\ve\left(2\beta+\ve+ n-2\right) r^{\ve-2}\right) v
			\\
			& + \left((2\beta+n-1) r^{-1} -2 \alpha \ve r^{\ve-1}\right) v_r + r^{-2} \Delta_{\sn} v+ v_{rr} - v_t\bigg].\notag
		\end{align}
		We now apply the algebraic inequality $(a+b)^2 \geq a^2 + 2ab$, with 
		\[
		a= r^{\beta -2} e^{-\alpha r^\ve} \left( \beta(\beta+n-2) v + \Delta_{\sn} v + (2\beta+ n-1)r v_r  +r^2v_{rr}\right),
		\] 
		and 
		\[
		b = r^\beta e^{-\alpha r^\ve}  \left(\alpha^2 \ve^2 r^{2\ve-2} v - \alpha\ve(2\beta+\ve+ n-2) r^{\ve-2} v
		-2 \alpha \ve r^{\ve-1} v_r - v_t\right),
		\]
		obtaining 
		\begin{equation}
			\label{rt12nd}
			\int r^{-2\alpha}  e^{2\alpha r^\ve}(\Delta u - \partial_t u)^2  \geq I+II+III+IV
		\end{equation}
	where \begin{equation}
		\label{defI}
		I:= \int r^{-2\alpha+2\beta-4} (\beta(\beta+n-2) v + \Delta_{\sn}v )^2,
	\end{equation}
	\begin{equation}
	\label{defII}
II:=	2\beta(\beta+n-2)(2\beta+n-1)\int r^{-2\alpha+2\beta-3} v v_r,
	\end{equation}
		\begin{align}
			\label{defIII}
			III:&=(2\beta+n-1)^2 \int r^{-2\alpha + 2\beta -2} v_r^2 +\int r^{-2\alpha+2\beta}v_{rr}^2
			\\
			& + 2(2\beta+n-1)\int r^{-2\alpha+2\beta-3} v_r \Delta_{\sn}v
			\notag
			\\
			& +2\beta(\beta+n-2)\int r^{-2\alpha+2\beta-2}v v_{rr} \notag
			+2\int r^{-2\alpha+2\beta-2}v_{rr}\Delta_{\sn}v
			\notag\\
			& + 2(2\beta+n-1) \int r^{-2\alpha + 2\beta -1} v_r v_{rr},~ \text{and}\notag 
		\end{align} 
	\begin{align}
		\label{defIV}
		&IV:= 2 \alpha^2 \ve^2 \beta(\beta+n-2)   \int r^{-2\alpha +2\beta +2\ve -4} v^2-  2 \alpha \ve \beta(\beta+n-2)  (2 \beta+ \ve+n-2)  \int r^{-2\alpha + 2\beta +\ve -4} v^2 
		\\
		&- 4  \alpha \ve \beta(\beta+n-2) \int r^{-2\alpha+2\beta +\ve -3} vv_r+2 \alpha^2\ve^2  \int r^{-2\alpha +2\beta +2\ve -4} v \Delta_{\sn} v
		\notag\\
		&-  2 \alpha\ve\left(2\beta+\ve+ n-2\right) \int r^{-2\alpha + 2\beta + \ve -4} v \Delta_{\sn} v- 4  \alpha  \ve \int r^{-2\alpha +2\beta+ \ve-3 } v_r \Delta_{\sn} v    
		\notag\\
		& - 2\alpha \ve (2\beta+n-1) (2 \beta+ \ve + n-2) \int r^{-2\alpha + 2\beta+\ve -3} vv_r 
		+ 2 \alpha^2 \ve^2 (2\beta+n-1)  \int r^{-2\alpha+2\beta+2\ve -3} vv_r 
		\notag\\
		& - 4 \alpha \ve( 2\beta+n-1)  \int r^{-2 \alpha +2\beta + \ve -2} v_r^2 +2\alpha^2\ve^2\int r^{-2\alpha+2\beta+2\ve-2}vv_{rr}
		\notag\\
		&-2\alpha\ve(2\beta+\ve+n-2)\int r^{-2\alpha+2\beta+\ve-2}vv_{rr}-4\alpha\ve \int r^{-2\alpha+2\beta+\ve-1} v_rv_{rr}\notag \\
		 &-2\beta(\beta+n-2)\int r^{-2\alpha+2\beta-2} v v_t-2\int r^{-2\alpha+2\beta-2}v_t\Delta_{\sn}v \notag\\
			& -2 (2\beta+n-1) \int r^{-2\alpha+2\beta-1} v_t v_r-2\int r^{-2\alpha+2\beta} v_tv_{rr}. \notag
 	\end{align}
	 Now we proceed to estimate  each integral listed above. For the convenience of the reader we have grouped the integrals in the above way. More precisely, I, II, III cover all the terms coming from `$a^2$' while $IV$ contains all the terms that comes from `$2ab$'.  Also note that  in our case, the choice of   $a$ and $b$ are different from that in \cite{BGM}. With   $\beta$ as in \eqref{rel}, we find  that the integral $II$ vanishes by an application of Lemma \ref{L:radial} analogous to that in \cite{BGM}.  
	 
	 \medskip

\noindent\textbf{{\it\underline{Estimate for $III$:}}} 

We now turn our attention to the terms of $III.$ 
		From \cite[(2.6), (2.11) and (2.12)]{BGM}, it follows
		\begin{align}\label{zer0} 
			& 2 \int r^{-2\alpha+2\beta-3} v_r \Delta_{\sn}v =0,
		\end{align}	\[
	\int r^{-2\alpha+2\beta-2} v v_{rr}  =- \int r^{-n+2} v_{r}^2,
	\]
and
\[\int r^{-2\alpha + 2\beta -1} v_r v_{rr}=-\int r^{-n+2}v_r^2.\] 	Again another application of Lemma \ref{L:radial} gives
\begin{align} \label{3.9}
	& 2\int r^{-2\alpha+2\beta-2}v_{rr}\Delta_{\sn}v = 2\int r^{-n+2} v_{rr} \Delta_{\sn} v 
	\\
	&= -2 \int r^{-n+2} v_r \Delta_{\sn} v_r - 2 (n-1)\int r^{-n+1} v_r \Delta_{\sn} v    
	\nonumber\\
	&=2\int_{0}^T \int_0^{\infty} r\int_{\mathbb{S}^{n-1}} |\nabla_{\sn} v_r|^2 d\omega dr dt,
	\nonumber
\end{align}
since $\int r^{-n+1} v_r \Delta_{\sn} v=0$. Now we use the notation $\nabla_Tv:=r^{-1}\nabla_{\sn} v$ in view of which, we have $$|\nabla v|^2=v_r^2+|\nabla_Tv|^2.$$ So, this notation along with \eqref{3.9} yield
\begin{equation}\label{hj}
	2\int r^{-2\alpha+2\beta-2}v_{rr}\Delta_{\sn}v=2\int r^{-n+4}|\nabla_Tv_r|^2.
\end{equation} 

Thus from \eqref{defIII}-\eqref{hj} it follows 
\begin{align}
	\label{est:III}
	III& \geq 2\alpha^2 \int r^{-n+2} v_{r}^2+\int r^{-n+4}v_{rr}^2+2\int r^{-n+4}|\nabla_Tv_r|^2.\end{align}
We now proceed to estimate $IV.$

\noindent\textbf{{\it\underline{Estimate for $IV$:}}}  First we notice  by integrating  by parts  in $t$ as in \cite{BGM} that the following holds	\begin{equation}\label{ut1}
		\int r^{-2\alpha+2\beta-2} v v_t =0,
		\end{equation}
	and		\begin{align}\label{ut2}
			& 2 \int r^{-2\alpha+2\beta-2} v_t \Delta_{\sn}v   = 0.
		\end{align}
		
	Next, as in \cite[(2.7)-(2.9)]{BGM} we have		\begin{equation}\label{sp1}
			- 4  \alpha \ve \int r^{-2\alpha + 2\beta + \ve - 3} v_r \Delta_{\sn} v  = -2  \alpha  \ve^2 \int r^{-n+\ve} |\nabla_{\sn} v|^2,
		\end{equation}
		
		\begin{align}\label{sp2}
			& -  2 \alpha\ve\left(2\beta+\ve+ n-2\right)\int r^{-2\alpha+2\beta + \ve - 4} v \Delta_{\sn} v
			= 2 \alpha\ve\left(2\alpha+\ve+ 2\right)  \int r^{-n+\ve}  |\nabla_{\sn} v|^2,
		\end{align}
		and
		\begin{equation}\label{sp22}
			2\alpha^2 \ve^2  \int r^{-2\alpha+2\beta + 2\ve - 4} v \Delta_{\sn} v=- 2\alpha^2 \ve^2    \int r^{-n+2\ve}  |\nabla_{\sn} v|^2\geq - 2\alpha^2 \ve^2    \int r^{-n+\ve}  |\nabla_{\sn} v|^2
		\end{equation}
	where in the last inequality we have used $r^{\ve}<1$ as $0<r\le R<1.$
	 So, from \eqref{sp1}, \eqref{sp2} and \eqref{sp22} we observe that for $\ve$ sufficiently small (for instance take $0<\ve<\frac1{20}$ )
		\begin{align}\label{sp4}
			&- 4 \ve \alpha  \int r^{-2\alpha +2\beta+ \ve-3 } v_r \Delta_{\sn} v -  2 \alpha\ve\left(2\beta+\ve+ n-2\right) \int r^{-2\alpha + 2\beta + \ve -4} v \Delta_{\sn} v
			\\
			&  +2 \alpha^2\ve^2  \int r^{-2\alpha +2\beta +2\ve -4} v \Delta_{\sn} v\geq (2\alpha^2(2\ve-\ve^2)+4\alpha\ve)   \int r^{-n+\ve} |\nabla_{\sn} v|^2
			\notag\\
			&\hspace*{5.5cm}\geq \frac{39}{10}\alpha^2\ve\int r^{-n+\ve} |\nabla_{\sn} v|^2.\notag
		\end{align}

		Furthermore, it follows from \cite[(2.14)]{BGM} that for all $\alpha$ sufficiently large and $\ve$ small enough, the following inequality holds for some $C>0$ universal
		\begin{align}\label{v2}
			&- 2 \alpha \ve (2\beta+n-1)  (2 \beta+ \ve + n-2) \int r^{-2\alpha + 2\beta+\ve -3} vv_r  + 2 \alpha^2 \ve^2 (2\beta+n-1)  \int r^{-2\alpha+2\beta+2\ve -3} vv_r 
			\\
			&- 4  \alpha \ve \beta(\beta+n-2) \int r^{-2\alpha+2\beta +\ve -3} vv_r-  2 \alpha \ve \beta(\beta+n-2) (2\beta+\ve+n-2)  \int r^{-2\alpha + 2\beta +\ve -4} v^2  
			\notag
			\\
			& 
			+ 2 \alpha^2 \ve^2 \beta(\beta+n-2) \int r^{-2\alpha +2\beta +2\ve -4} v^2
			\geq - C \alpha^4 \ve \int r^{-n+\ve} v^2.  
			\notag
		\end{align}
		Also as in \cite[(2.16)]{BGM} we have for all $\alpha$ large
		\begin{align}\label{tyu}
			& (2\beta+n-1)^2 \int r^{-2\alpha + 2\beta - 2} v_r^2 + 2 (2\beta+n-1)\int r^{-2\alpha + 2\beta -1} v_r v_{rr}
			\\
			& - 4 \alpha \ve (2\beta+n-1) \int r^{-2\alpha + 2 \beta +\ve -2} v_r^2
			\notag\\
			& \ge \big[(2\alpha+3)^2 - 2(2\alpha+3) - \alpha^2\big]\int r^{-n +2} v_r^2 \ge 2 \alpha^2 \int r^{-n +2} v_r^2.
			\notag
		\end{align}
	Now we will handle the integrals involving the term $v_{rr}.$	Once again using  Lemma \ref{L:radial} we have 
	\[\int r^{-2\alpha+2\beta+2\ve-2}vv_{rr}=-\int r^{-n+2\ve+2}v_r^2-(2\ve+1)\int r^{-n+2\ve+1}v_rv.\] Again applying the same lemma to the last term we get 
	\[\int r^{-n+2\ve+1}v_rv=-\ve \int r^{-n+2\ve}v^2\] which yields 
	\begin{equation}
		\label{vrr-10}
		\int r^{-2\alpha+2\beta+2\ve-2}vv_{rr}=-\int r^{-n+2\ve+2}v_r^2+(2\ve+1)\ve\int r^{-n+2\ve}v^2.
	\end{equation}
Similarly, we can check that 
\begin{equation}
	\label{vrr-11}
	\int r^{-2\alpha+2\beta+\ve-2}vv_{rr}=-\int r^{-n+\ve+2}v_r^2+\frac12(\ve+1)\ve\int r^{-n+\ve}v^2,
\end{equation}
and also\begin{equation}
	\label{vrr-12}
	\int r^{-2\alpha+2\beta+\ve-1} v_rv_{rr}=-(\ve+2)\int r^{-n+\ve+1}v_r^2.
\end{equation}
Hence  we have  for all large enough $\alpha$ and small $\ve$ that the following inequality holds
\begin{align}
	&2\alpha^2\ve^2\int r^{-2\alpha+2\beta+2\ve-2}vv_{rr}-2\alpha\ve(2\beta+\ve+n-2)\int r^{-2\alpha+2\beta+\ve-2}vv_{rr}-4\alpha\ve \int r^{-2\alpha+2\beta+\ve-1} v_rv_{rr}\\
	&\geq (2\alpha^2\varepsilon+2\alpha\varepsilon^2+4\alpha\varepsilon)\int r^{-n+2+\varepsilon}v_r^2 - C \alpha^2 \ve^2\int r^{-n+\varepsilon} v^2.\notag  
\end{align}
Finally, we complete estimates of the integrals involving the term $v_t.$ Recall that we have already dealt with the 13th and 14th integrals of $IV$, which turn out to be zero. See \eqref{ut1} and \eqref{ut2} above, So, we are just left with the  15th and 16th integral in this regard. Now the 15th integral can be handled as follows
		\begin{align}\label{10}
			& \left|2(2\beta+n-1) \int r^{-2\alpha+2\beta-1} v_t v_r\right|\le 4\alpha(1+\frac{3}{2\alpha})\int r^{-n+3} |v_t| |v_r|
			\\
			& \le 5\alpha\left(\frac{\alpha}5 \int  r^{-n+2} v_r^2 + \frac{5}{\alpha} \int  r^{-n+4} v_t^2\right) \le \alpha^2 \int  r^{-n+2} v_r^2 + 25 \int  r^{-n+4} v_t^2.
			\notag
		\end{align}
	Also, the 16th integral can be handled as follows.
	  		\begin{align}\label{vt-16}
	  	\left|2\int r^{-2\alpha+2\beta} v_t v_{rr}\right|\le 2\int r^{-n+4} |v_t| |v_r| \leq  8\int  r^{-n+4} v_t^2 + \frac12\int  r^{-n+4} v_{rr}^2.
	  \end{align}
	
		Therefore, we conclude that 
		\begin{align}
		\label{est:IV}
			 IV &\geq -C\alpha^4\ve\int r^{-n+\ve}v^2 +\frac{39}{10}\alpha^2\ve \int r^{-n+\ve}|\nabla_{\sn}v|^2-\alpha^2\int r^{-n+2}v_{r}^2\\&-33\int r^{-n+4}v_{t}^2-\frac12\int r^{-n+4}v_{rr}^2\notag
		\end{align}
		Hence from \eqref{rt12nd}-\eqref{est:IV} , it follow
		\begin{align}
		\label{est:234}
		II+III+IV&\geq \alpha^2\int r^{-n+2}v_{r}^2+\frac12 \int r^{-n+4}v_{rr}^2 
		-C\alpha^4\varepsilon\int r^{-n+\varepsilon}v^2-33\int r^{-n+4}v_{t}^2\\
		&+ 2 \int r^{-n+4}|\nabla_Tv_r|^2 +\frac{39}{10}\alpha^2\ve \int r^{-n+\ve}|\nabla_{\sn}v|^2.\notag 
		\end{align}

		Now we are just left with estimating the integral $I.$ 
		
		\medskip

		\noindent\textbf{{\it\underline{Estimates for $I$:}}} We estimate I in two ways in order  to incorporate a  critical zero order term and also a   tangential second derivative term. First we estimate I using the spherical harmonic decomposition of $L^2(\sn).$ This part of the argument is similar to that in \cite{BGM} which uses the spectral gap of the spherical Laplacian. However  in our estimate, we also additionally incorporate a tangential gradient term that  is required eventually.  Thus we  provide all the details. For the sake of the convenience of the reader, we recall some notations and basics facts about spherical harmonics here. 
		
		Let $H_k$ denote the space of homogeneous harmonic polynomials of degree $k\geq 0.$ Restrictions of elements of $H_k$ to $\sn$ are called the spherical harmonics of degree $k$, which is denoted by $\mathcal{H}_k.$ The spherical harmonic decomposition reads as $L^2(\sn)=\bigoplus_{k\geq 0}\mathcal{H}_k.$ Here $\mathcal{H}_k$'s are finite dimensional subspaces, an orthonormal basis of which is given by $\{S_{k,j}:1\leq j\leq d_k\}$, where $d_k$ denotes the dimension of $\mathcal{H}_k.$ Then it is well known that  $\{S_{k,j}: 1\leq j\leq d_k, k\geq0\}$ forms an orthonormal basis for $L^2(\sn)$, and  $\Delta_{\sn} S_{k,j} = - k(k+n-2) S_{k,j}$. So, writing $v(x,t) = v(r\omega,t)$, the expansion of $v$ in terms of spherical harmonics takes the form 
		$$v(rw,t)=\sum_{k=0}^\infty\sum_{j=1}^{d_k} v_{k,j}(r,t) S_{k,j}(w)$$ 
		where $v_{k,j}$ dnote the spherical harmonic coefficient of $v$ defined by $$v_k(r,t) = \int_{\sn} v(rw,t) S_{k,j}(w) d\sigma(w).$$
		This leads to the following spectral decomposition of the spherical Laplacian:
		$$\Delta_{\sn} v(rw,t) = - \sum_{k=0}^\infty \sum_{j=1}^{d_k}k(k+n-2) v_k(r,t) S_{k,j}(w)$$
		which will help us in estimating $I.$ This decomposition along with the orthonormality of spherical harmonics gives  
		\begin{align*}
			& \int r^{-2\alpha+2\beta-4} (\beta(\beta+n-2) v + \Delta_{\sn}v )^2 \\
			&= \int r^{-n}( \beta(\beta+n-2) v + \Delta_{\sn} v)^2
			\\
			& =  \int_{0}^T \int_{0}^{\infty} r^{-1} \sum_{k=0}^{\infty}\sum_{j=1}^{d_k} ( \beta(\beta+n-2) - k(k+n-2))^2 v_{k,j} (r,t)^2 dr dt.
		\end{align*}
		Now given that $\alpha$ is of the form $\alpha= k + \frac{n+1}{2}$, in view of \eqref{rel} it follows that $\text{dist} (\beta, \mathbb N) = \frac{1}{2}.$ This implies the following inequality
		$$( \beta(\beta+n-2) - k(k+n-2))^2\geq \frac12 \beta(\beta+n-2) +\frac12k(k+n-2)$$ which yields
		\begin{align*}
		I&\geq \int_{0}^T \int_{0}^{\infty} r^{-1} \sum_{k=0}^{\infty}\sum_{j=1}^{d_k} ( \frac12\beta(\beta+n-2)+\frac12
		k(k+n-2)) v_{k,j} (r,t)^2 dr dt\\
		&= \frac12\beta(\beta+n-2)\int r^{-n}v^2+\frac12 \int r^{-n}|\nabla_{\sn}v|^2.
		\end{align*}
		Now in in view of \eqref{rel}, we have that  $\frac12\beta(\beta+n-2)\geq \frac{\alpha^2}{2}$ which leads to the estimate 
		\begin{equation}
		\label{est:I1}
		I\geq \frac{\alpha^2}{2}\int r^{-n}v^2+\frac12 \int r^{-n+2}|\nabla_{T}v|^2.
		\end{equation}
		
		Before we estimate  $I$ in another way, recalling \eqref{hj} we  note that 
$$(\nabla_Tv)_r=\partial_r(r^{-1}\nabla_{\sn}v)=-r^{-2}\nabla_{\sn}v+r^{-1}\nabla_{\sn}v_r=-r^{-1}\nabla_Tv+\nabla_Tv_r$$ leading to
\begin{align*}
	&\int r^{-n+4}|\nabla_Tv_r|^2=\int r^{-n+4}|(\nabla_Tv)_r+r^{-1}\nabla_Tv|^2\\
	&= \int r^{-n+4}|(\nabla_Tv)_r|^2+2\int r^{-n+3}\langle (\nabla_Tv)_r, \nabla_Tv \rangle +\int r^{-n+2}|\nabla_Tv|^2.
\end{align*}
Now using the algebraic inequality in \eqref{ai1} ( with $\delta=\frac{1}{2}$) we find that for some $0 <c_0 <1/4$ 
\begin{align}\label{jku1}
& \int r^{-n+4}|(\nabla_Tv)_r|^2+2\int r^{-n+3}\langle (\nabla_Tv)_r, \nabla_Tv \rangle +\int r^{-n+2}|\nabla_Tv|^2 +  \frac12 \int r^{-n+2}|\nabla_{T}v|^2\\
& \geq       c_0 \left(  \int r^{-n+2}|\nabla_{T}v|^2 + \int r^{-n+4}|(\nabla_Tv)_r|^2\right).\notag
\end{align}
Thus from \eqref{est:234} it follows  by writing \begin{equation}\label{brk}2 \int r^{-n+4} |\nabla_T v_r|^2 =  \int r^{-n+4} |\nabla_T v_r|^2 +\int r^{-n+4} |\nabla_T v_r|^2 \end{equation} and by applying the estimate in \eqref{jku1}  to one of the term in the right hand side of \eqref{brk} above that the following inequality holds
\begin{align}\label{one}
   &  \int r^{-2\alpha}  e^{2\alpha r^\ve}(\Delta u - \partial_t u)^2 \geq\alpha^2\int r^{-n+2}v_{r}^2+\frac12 \int r^{-n+4}v_{rr}^2 \\
		&-C\alpha^4\varepsilon\int r^{-n+\varepsilon}v^2-33\int r^{-n+4}v_{t}^2\notag\\
		&+  \int r^{-n+4}|\nabla_Tv_r|^2  +  \frac{\alpha^2}{2}\int r^{-n}v^2 +   c_0 \left(  \int r^{-n+2}|\nabla_{T}v|^2 + \int r^{-n+4}|(\nabla_Tv)_r|^2\right).\notag 	
\end{align}
		
		Now we estimate $I$ by simply expanding the square in the integrand as follows:
		\begin{align}
		\label{est:I2}
		I&=\int r^{-2\alpha+2\beta-4} (\beta(\beta+n-2) v + \Delta_{\sn}v )^2\\
		&=\int r^{-n}(\Delta_{\sn}v)^2+(\beta(\beta+n-2))^2\int r^{-n}v^2+2\beta(\beta+n-2)\int r^{-n}v\Delta_{\sn}v\notag \\
		&\geq \int r^{-n}(\Delta_{\sn}v)^2+\alpha^4\int r^{-n}v^2-2\beta(\beta+n-2)\int r^{-n}|\nabla_{\sn}v|^2.\notag
		\end{align}

		Likewise  from  \eqref{est:234} and \eqref{est:I2} we deduce the following estimate
		\begin{align}
		\label{sf-2}
		\int r^{-2\alpha}  e^{2\alpha r^\ve}&(\Delta u - \partial_t u)^2\geq \int r^{-n}(\Delta_{\sn}v)^2+\alpha^4\int r^{-n}v^2-2\beta(\beta+n-2)\int r^{-n}|\nabla_{\sn}v|^2\\  &+\alpha^2\int r^{-n+2}v_{r}^2+\frac12 \int r^{-n+4}v_{rr}^2 + 2\int r^{-n+4} |\nabla_T v_r|^2 \notag\\& -C\alpha^4\varepsilon\int r^{-n+\varepsilon}v^2-33\int r^{-n+4}v_{t}^2.\notag
		\end{align}
	We now employ an idea from \cite{Reg}.	By multiplying \eqref{one} with $\frac{8\beta}{c_0}(\beta+n-1)$ and then adding with \eqref{sf-2}, we obtain 
		\begin{align*}
		&\left(1+\frac{8\beta}{c_0}(\beta+n-1)\right)\int r^{-2\alpha}  e^{2\alpha r^\ve}(\Delta u - \partial_t u)^2\\
		&\geq \frac{4\beta}{c_0}(\beta+n-1)\alpha^2\int r^{-n}v^2 + 4 \beta^2\left(  \int r^{-n+2}|\nabla_{T}v|^2 + \int r^{-n+4}|(\nabla_Tv)_r|^2\right) 	+\alpha^4\int r^{-n+2}v_{r}^2\\	
		&+\frac{\alpha^2}{ 2} \int r^{-n+4}v_{rr}^2+\int r^{-n}(\Delta_{\sn}v)^2
		-(1+\frac{8\beta}{c_0}(\beta+n-1))[C\alpha^4\varepsilon\int r^{-n+\varepsilon}v^2+33\int r^{-n+4}v_{t}^2],
		\end{align*}
		which on dividing both side by $(1+\frac{8\beta}{c_0}(\beta+n-1))$ yields
		\begin{align}
		&\int r^{-2\alpha}  e^{2\alpha r^\ve}(\Delta u - \partial_t u)^2\geq C_1\alpha^2 \int r^{-n}v^2+ C_2 \left[\int r^{-n}|\nabla_{\sn}v|^2+\int r^{-n+2}v_{r}^2\right]\\
		&+\frac{C_3}{\alpha^2}\left[\int r^{-n+4}v_{rr}^2+\int r^{-n}(\Delta_{\sn}v)^2+ \int r^{-n+4}|(\nabla_Tv)_r|^2+\int r^{-n+4}|\nabla_Tv_r|^2\right]\notag\\&-C\alpha^4\varepsilon\int r^{-n+\varepsilon}v^2-33\int r^{-n+4}v_{t}^2.\notag
		\end{align}
	Such an inequality   is valid for sufficiently large $\alpha$ of the form $k + \frac{n+1}{2}$  and  sufficiently small $\varepsilon.$ Now we put $v = r^{-\beta} e^{\alpha r^\ve} u$ in the above inequality and estimate each term in the RHS in terms of $u.$ One immediately has 
		\begin{align}
        \label{sf-3}
		&\int r^{-2\alpha}  e^{2\alpha r^\ve}(\Delta u - \partial_t u)^2\geq C_1\alpha^2 \int r^{-2\alpha-4}e^{\alpha r^{\ve}}u^2+ C_2 \left[\int r^{-2\alpha-4}e^{2\alpha r^{\ve}}|\nabla_{\sn}u|^2+ \int r^{-n+2}v_{r}^2\right]\\
		&+\frac{C_3}{\alpha^2}\int r^{-n+4}v_{rr}^2+\frac{C_3}{\alpha^2}\int r^{-2\alpha-4}e^{2\alpha r^{\ve}}(\Delta_{\sn}u)^2+\frac{C_3}{\alpha^2} \int r^{-n+4}|(\nabla_Tv)_r|^2+\frac{C_3}{\alpha^2}\int r^{-n+4}|\nabla_Tv_r|^2\notag\\&-C\alpha^4\varepsilon\int r^{-2\alpha-4+\varepsilon}e^{2\alpha r^{\ve}}u^2-33\int r^{-2\alpha}e^{2\alpha r^{\ve}}u_{t}^2.\notag
		\end{align} We first handle the integrals involving the terms $v_r.$ Note that by an easy calculation we have 
		$$v_r=e^{\alpha r^{\ve}}(r^{-\beta}u_r-\beta r^{-\beta-1}u+r^{-\beta+\ve-1}\alpha\ve u).$$
		Substituting this in \eqref{sf-3}  and by using \eqref{rel} we get
		\begin{align}\label{jkh1}
		&\int r^{-2\alpha}  e^{2\alpha r^\ve}(\Delta u - \partial_t u)^2\geq C_1\alpha^2 \int r^{-2\alpha-4}e^{\alpha r^{\ve}}u^2+ C_2 \bigg[\int r^{-2\alpha-4}e^{2\alpha r^{\ve}}|\nabla_{\sn}u|^2+ \int r^{-2\alpha -2}e^{2\alpha r^\ve}u_{r}^2\\
		& + \int r^{-2\alpha -4} e^{2 \alpha r^\ve} (\beta u-r^{\ve}\alpha\ve u)^2 - 2 \int r^{-2\alpha -3} e^{2\alpha r^\ve} u_r (\beta u - r^\ve \alpha \ve u ) \bigg]\notag\\ &  +\frac{C_3}{\alpha^2}\int r^{-n+4}v_{rr}^2+\frac{C_3}{\alpha^2}\int r^{-2\alpha-4}e^{2\alpha r^{\ve}}(\Delta_{\sn}u)^2+\frac{C_3}{\alpha^2} \int r^{-n+4}|(\nabla_Tv)_r|^2+\frac{C_3}{\alpha^2}\int r^{-n+4}|\nabla_Tv_r|^2\notag\\&-C\alpha^4\varepsilon\int r^{-2\alpha-4+\varepsilon}e^{2\alpha r^{\ve}}u^2-33\int r^{-2\alpha}e^{2\alpha r^{\ve}}u_{t}^2.\notag
		\end{align} 
		Now by using the inequality \eqref{ai1} we find, 
		\begin{align}\label{aiop}
		&C_1\alpha^2 \int r^{-2\alpha-4}e^{2\alpha r^{\ve}}u^2 	 + C_2   \bigg[ \int r^{-2\alpha -2}e^{2\alpha r^\ve}u_{r}^2\\
		& + \int r^{-2\alpha -4} e^{2 \alpha r^\ve} (\beta u-r^{\ve}\alpha\ve u)^2 - 2 \int r^{-2\alpha -3} e^{2\alpha r^\ve} u_r (\beta u - r^\ve \alpha \ve u ) \bigg]	\notag\\
		& \geq  c_1   \left( \alpha^2 \int r^{-2\alpha-4}e^{2\alpha r^{\ve}}u^2 	  +\int r^{-2\alpha -2} e^{2\alpha r^\ve} u_r^2\right).\notag
			\end{align}	
			Using \eqref{aiop} in \eqref{jkh1} we have
			\begin{align}\label{jkh2}
		&\int r^{-2\alpha}  e^{2\alpha r^\ve}(\Delta u - \partial_t u)^2\geq c_1\alpha^2 \int r^{-2\alpha-4}e^{\alpha r^{\ve}}u^2+ c_1 \bigg[\int r^{-2\alpha-4}e^{2\alpha r^{\ve}}|\nabla_{\sn}u|^2+ \int r^{-2\alpha -2}e^{2\alpha r^\ve}u_{r}^2 \bigg]\\
		 &  +\frac{C_3}{\alpha^2}\int r^{-n+4}v_{rr}^2+\frac{C_3}{\alpha^2}\int r^{-2\alpha-4}e^{2\alpha r^{\ve}}(\Delta_{\sn}u)^2+\frac{C_3}{\alpha^2} \int r^{-n+4}|(\nabla_Tv)_r|^2+\frac{C_3}{\alpha^2}\int r^{-n+4}|\nabla_Tv_r|^2\notag\\&-C\alpha^4\varepsilon\int r^{-2\alpha-4+\varepsilon}e^{2\alpha r^{\ve}}u^2-33\int r^{-2\alpha}e^{2\alpha r^{\ve}}u_{t}^2.\notag
		\end{align} 			
			
		Now we handle the integral involving $(\nabla_Tv)_r.$ An easy calculation yields:
		$$(\nabla_Tv)_r=e^{\alpha r^{\ve}}r^{-\beta}(\nabla_Tu)_r+e^{\alpha r^{\ve}}(\alpha\ve r^{-\beta+\ve-1}-\beta r^{-\beta-1})\nabla_Tu.$$ Using this along with  the algebraic inequality  in \eqref{ai1}  we can proceed as above and deduce the following inequality
		\begin{align}
			\label{dtvr}
			&\frac{C_3}{\alpha^2} \int r^{-n+4}|(\nabla_Tv)_r|^2 + c_1 \int r^{-2\alpha -4} e^{2\alpha r^\ve} |\nabla_{\sn} u|^2 \\&\geq \frac{c_2}{\alpha^2} \int r^{-2\alpha}e^{2\alpha r^{\ve}}|(\nabla_Tu)_r|^2 + c_2 \int r^{-2\alpha -2} e^{2\alpha r^{\ve}} |\nabla_T u|^2.\notag
		\end{align}
		Using \eqref{dtvr} in \eqref{jkh2} we find
		\begin{align}\label{jkh4}
		&\int r^{-2\alpha}  e^{2\alpha r^\ve}(\Delta u - \partial_t u)^2\geq c_1\alpha^2 \int r^{-2\alpha-4}e^{\alpha r^{\ve}}u^2+ c_2 \bigg[\int r^{-2\alpha-2}e^{2\alpha r^{\ve}}|\nabla_{T}u|^2+ \int r^{-2\alpha -2}e^{2\alpha r^\ve}u_{r}^2 \bigg]\\
		 &  +\frac{C_3}{\alpha^2}\int r^{-n+4}v_{rr}^2+\frac{C_3}{\alpha^2}\int r^{-2\alpha-4}e^{2\alpha r^{\ve}}(\Delta_{\sn}u)^2+\frac{c_2}{\alpha^2} \int r^{-2\alpha}e^{2\alpha r^{\ve}}|(\nabla_Tu)_r|^2+\frac{C_3}{\alpha^2}\int r^{-n+4}|\nabla_Tv_r|^2\notag\\&-C\alpha^4\varepsilon\int r^{-2\alpha-4+\varepsilon}e^{2\alpha r^{\ve}}u^2-33\int r^{-2\alpha}e^{2\alpha r^{\ve}}u_{t}^2.\notag
		\end{align} 	
		Similarly using 
		$$(\nabla_Tu)_r =- r^{-1}\nabla_Tu + \nabla_Tu_r$$ combined with the  algebraic inequality \eqref{ai1} we observe
		\begin{equation}\label{jkopp}
		\frac{c_2}{2\alpha^2} \int r^{-2\alpha}e^{2\alpha r^{\ve}}|(\nabla_Tu)_r|^2+ 	c_2\int r^{-2\alpha-2}e^{2\alpha r^{\ve}}|\nabla_{T}u|^2	 \geq c_4  \int r^{-2\alpha-2}e^{2\alpha r^{\ve}}|\nabla_{T}u|^2 + \frac{c_4}{\alpha^2} \int r^{-2\alpha} |\nabla_T u_r|^2.		\end{equation}
		Therefore one can incorporate  $\frac{c_4}{\alpha^2} \int r^{-2\alpha} e^{2\alpha r^\ve} |\nabla_T u_r|^2$ on the right hand side in \eqref{jkh4} above. Finally by writing		 $v_{rr}$ in terms of $u, u_r$ and $u_{rr}$	 and again by using \eqref{ai1} we  find
		\begin{align}\label{jkopp1}
		&c_1\alpha^2 \int r^{-2\alpha-4}e^{2\alpha r^{\ve}}u^2	 + c_2 \int r^{-2\alpha -2} e^{2\alpha r^\ve} u_r^2 + 	\frac{C_3}{\alpha^2}\int r^{-n+4}v_{rr}^2	
		\\
		& \geq c_5 \alpha^2 \int r^{-2\alpha -4} e^{2\alpha r^\ve} u^2 + c_5 \int r^{-2\alpha -2} e^{2\alpha r^\ve} u_r^2 + \frac{c_5}{\alpha^2} \int r^{-2\alpha} e^{2 \alpha r^\ve} u_{rr}^2.\notag\end{align}
		Indeed, by an easy calculation we first note that
		\begin{align*}
		   e^{-\alpha r^{\ve}} v_{rr}&=r^{-\beta-2}[\beta(\beta+1)+\alpha\ve(-\beta+\ve-1)r^{\ve}-\alpha\ve\beta r^{\ve}+\alpha^2\ve^2r^{2\ve}]u 
		   +r^{-\beta-1}(2\alpha\ve r^{\ve}-2\beta)u_r+ r^{-\beta}u_{rr}.
		\end{align*}
		Let us write $v_{rr}/\alpha=a+b$ with $a=e^{\alpha r^{\ve}}r^{-\beta}u_{rr}/\alpha$ and $b=e^{\alpha r^{\ve}}r^{-\beta-2}(F_1u+F_2u_r)$ where $F_1$ and $F_2$ are given by 
		\[F_1=\beta(\beta+1)/\alpha+\ve(-\beta+\ve-1)r^{\ve}-\ve\beta r^{\ve}+\alpha\ve^2r^{2\ve},~\text{and}~F_2:=(2\ve r^{\ve}-2\beta/\alpha)r.\]
		Now applying the algebraic identity $(a+b)^2=a^2+2ab+b^2$ we obtain 
		\begin{align*}
		    &\frac{1}{\alpha^2}\int r^{-n+4}v_{rr}^2\geq \frac{1}{\alpha^2} \int r^{-2\alpha} e^{2 \alpha r^\ve} u_{rr}^2+\int r^{-2\alpha-4} e^{2 \alpha r^\ve}(F_1u+F_2u_r)^2+\frac{2}{\alpha}\int r^{-2\alpha-2} e^{2 \alpha r^\ve}(F_1u+F_2u_r)u_{rr}
		\end{align*}
		Now observe that 
		\begin{align*}
		    \int r^{-2\alpha-4} e^{2 \alpha r^\ve}(F_1u+F_2u_r)^2&\leq 2\int r^{-2\alpha-4} e^{2 \alpha r^\ve}(F_1^2u^2+F_2^2u_r^2)\\&=2\int r^{-2\alpha-4} e^{2 \alpha r^\ve}F_1^2u^2+2\int r^{-2\alpha-2} e^{2 \alpha r^\ve}(2\ve r^{\ve}-2\beta/\alpha)^2u_r^2\\
		    &\leq 2a_1\alpha^2 \int r^{-2\alpha-4} e^{2 \alpha r^\ve}u^2+2a_2\int r^{-2\alpha-2} e^{2 \alpha r^\ve}u_r^2
		\end{align*}
		where in the last inequality we have used the facts that $F_1^2\leq a_1 \alpha^2$ and $(2\ve r^{\ve}-2\beta/\alpha)^2\leq a_2$ for some positive constants $a_1$ and $a_2.$
		Hence we can choose $\delta>0$ (small enough) such that 
		\begin{align*}
		    \frac{c_1\alpha^2}2 \int r^{-2\alpha-4}e^{2\alpha r^{\ve}}u^2	 + \frac{c_2}2 \int r^{-2\alpha -2} e^{2\alpha r^\ve} u_r^2 \geq \delta C_3  \int r^{-2\alpha-4} e^{2 \alpha r^\ve}(F_1u+F_2u_r)^2
		\end{align*}
		which, together with the above observations, yields 
		\begin{align*}
		   & c_1\alpha^2 \int r^{-2\alpha-4}e^{2\alpha r^{\ve}}u^2	 + c_2 \int r^{-2\alpha -2} e^{2\alpha r^\ve} u_r^2 + 	\frac{C_3}{\alpha^2}\int r^{-n+4}v_{rr}^2 \\&\geq  \frac{c_1\alpha^2}2 \int r^{-2\alpha-4}e^{2\alpha r^{\ve}}u^2	 + \frac{c_2}2 \int r^{-2\alpha -2} e^{2\alpha r^\ve} u_r^2\\
		   &+ C_3 \left[(1+\delta)\int r^{-2\alpha-4} e^{2 \alpha r^\ve}(F_1u+F_2u_r)^2+\frac{2}{\alpha}\int r^{-2\alpha-2} e^{2 \alpha r^\ve}(F_1u+F_2u_r)u_{rr}+ \frac{1}{\alpha^2}\int r^{-2\alpha} e^{2 \alpha r^\ve} u_{rr}^2\right]
		\end{align*}
		 from which \eqref{jkopp1} follows by an application of Lemma \ref{alineq}.

		Using \eqref{jkopp} and \eqref{jkopp1} in \eqref{jkh4} and also the fact that $\int_{\sn} (\Delta_{\sn} f)^2 \geq C_n \int_{\sn} (\nabla^2_{\sn} f)^2$ which is a consequence of the ellipticity of the spherical Laplacian, we finally deduce the following estimate for some new constants $C_1, C_2$ and  $C_3$
		 		 
		 	\begin{align}
		 \label{sf-5}
		 &\int r^{-2\alpha}  e^{2\alpha r^\ve}(\Delta u - \partial_t u)^2\geq C_1\alpha^2 \int r^{-2\alpha-4}e^{\alpha r^{\ve}}u^2+ C_2 \int r^{-2\alpha-2}e^{2\alpha r^{\ve}}|\nabla u|^2\\
		 &+\frac{C_3}{\alpha^2}\int r^{-2\alpha}e^{2\alpha r^{\ve}}|\nabla^2u|^2-C\alpha^4\varepsilon\int r^{-2\alpha-4+\varepsilon}e^{2\alpha r^{\ve}}u^2-33\int r^{-2\alpha}e^{2\alpha r^{\ve}}u_{t}^2.\notag
		 \end{align}
		 Now to get rid of the integral involving the $u_t^2$- term, we use the following lemma proved in \cite{BGM}. 
		\begin{lemma}\label{L:del} 
			Let $R<1$ and let  $u \in C_0^{\infty}((B_R \setminus \{0\}) \times (0,T))$. There exist constants $d= d(n)>0$, $\alpha(n)>>1$ and $0<\ve(n)<<1$, such that for all $\alpha\ge \alpha(n)$ and every $0<\ve<\ve(n)$ one has 
			\begin{align}\label{claim1}
				\frac{d}{\alpha} \int r^{-2\alpha} e^{2\alpha r^\ve} u_t^2 + d\alpha^3\ve^2 \int r^{-2\alpha-4+\ve} e^{2\alpha r^\ve} u^2   \leq \int r^{-2\alpha}  e^{2\alpha r^\ve} (\Delta u - u_t)^2.
			\end{align}
		\end{lemma}
	
		Having the above lemma in hand, let us fix $0<\ve(n)<1$ and $\alpha(n)>>1$ such that \eqref{sf-5} and \eqref{claim1} hold simultaneously for $0<\ve<\ve(n)$ and $\alpha>\alpha(n)$. Now we pick $d_0 = d_0(n,\ve)>1$ suitably in such that
			$d_0 d  \ve  \geq 2 C \ \ \ \text{and}\ \ \ d d_0 > 33.$
		Having chosen such $d_0$, we multiply \eqref{claim1} by $d_0\alpha$ and add the resulting inequality to \eqref{sf-5}, obtaining
		\begin{align}\label{hold2}
		 &(d_0 \alpha +1 )\int r^{-2\alpha} e^{2\alpha r^\ve} (\Delta u - u_t)^2\geq C_1\alpha^2 \int r^{-2\alpha-4}e^{\alpha r^{\ve}}u^2+ C_2 \int r^{-2\alpha-2}e^{2\alpha r^{\ve}}|\nabla u|^2\\
		 &+\frac{C_3}{\alpha^2}\int r^{-2\alpha}e^{2\alpha r^{\ve}}|\nabla^2u|^2+(d_0d\ve-C)\alpha^4 \ve\int r^{-2\alpha-4+\varepsilon}e^{2\alpha r^{\ve}}u^2+(dd_0-33)\int r^{-2\alpha}e^{2\alpha r^{\ve}}u_{t}^2.\notag
			\notag
		\end{align}
	Finally, dividing both sides of the above inequality by  $\alpha$,  the required Carleman estimate \eqref{est1} is seen to follow.
		
	\end{proof}	 
	 
\subsection{Exponential decay of solutions}
Using the Carleman estimate in \eqref{est1}, we now show that if a solutions $u$ to the differential inequality \eqref{main-diff-ineq} vanishes to infinite order in the sense of  \eqref{vp},  then it exponential decays in space.
We first recall following Caccioppoli type estimate,  the proof of which is exactly the same as that of \cite[Lemma 3.1]{BGM}.
\begin{lemma}\label{cac-bgg}
Let $u$ be a solution to \eqref{main-diff-ineq}  in $B_R \times (-T, T)$ and let $0 < a< 1 < b$. Then, there exists a constant $C_1>0$, depending on $n, a, b, T$ and $M$ in \eqref{main-diff-ineq}, such that for every $r< \min\{1, R\}$ the following holds 
\[
\int_{ \{r/2<|x| < r\} \times (-T/2, T/2)} |\nabla u|^2 \leq \frac{C_1}{r^2} \int_{ \{r(1-a)/2 < |x|<  b r\} \times (-T, T) } u^2.
\]
\end{lemma}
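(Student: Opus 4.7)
The plan is a standard Caccioppoli-type argument, testing \eqref{main-diff-ineq} against $u\eta^2$ for a suitable spacetime cutoff $\eta \in C^\infty_c$ with $\eta \equiv 1$ on $\{r/2 < |x| < r\}\times(-T/2, T/2)$, $\operatorname{supp}\eta \subset \{r(1-a)/2 < |x| < br\}\times(-T, T)$, and
\[
|\nabla \eta| \leq \frac{C(a,b)}{r}, \qquad |\partial_t \eta| \leq \frac{C}{T},
\]
which can be built as a tensor product of one-dimensional mollified cutoffs. This is the same choice of test function used in the proof of \cite[Lemma 3.1]{BGM}, to which the paper explicitly refers.

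Next, I would multiply \eqref{main-diff-ineq} by $u\eta^2$, integrate over $B_R\times(-T,T)$, integrate by parts in space for the divergence term, and rewrite $u u_t = \tfrac{1}{2}\partial_t u^2$ followed by integration by parts in time to obtain
\[
\int \langle A\nabla u, \nabla u\rangle \eta^2 = -\int (\operatorname{div}(A\nabla u) - u_t)\, u\eta^2 - 2\int \langle A\nabla u, \nabla\eta\rangle u\eta + \int u^2 \eta\, \eta_t.
\]
The ellipticity of $A$ in \eqref{assump} gives the lower bound $\int\langle A\nabla u,\nabla u\rangle\eta^2 \geq \Lambda^{-1}\int|\nabla u|^2\eta^2$ on the LHS. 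On the RHS, the first integral is bounded via \eqref{main-diff-ineq} together with $|x|\geq r(1-a)/2$ on $\operatorname{supp}\eta$, which produces a contribution $\leq \tfrac{4M}{r^2(1-a)^2}\int u^2\eta^2$; the cross term is handled by Cauchy-Schwarz with small parameter $\delta$, yielding $\delta\int|\nabla u|^2\eta^2 + C(\delta,\Lambda,a,b)\,r^{-2}\int u^2\chi_{\operatorname{supp}\eta}$; and the $\eta_t$ term contributes at most $\tfrac{C}{T}\int u^2\chi_{\operatorname{supp}\eta}$.

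Finally, choosing $\delta < \Lambda^{-1}/2$ to absorb the $|\nabla u|^2$ piece into the LHS, and using $r<1$ to estimate $1/T \leq r^{-2}/T$, every remaining zeroth-order contribution takes the form $C_1 r^{-2}\int u^2\chi_{\operatorname{supp}\eta}$ with $C_1 = C_1(n, a, b, T, M, \Lambda)$. Since $\eta\equiv 1$ on $\{r/2<|x|<r\}\times(-T/2,T/2)$, restricting the integral on the LHS to this region yields the asserted inequality. I do not foresee any real obstacle: the only mildly delicate point is that the Hardy potential $M|x|^{-2}$ scales naturally as $r^{-2}$ on $\operatorname{supp}\eta$, but since it enters only as a zeroth-order term in $u$ on the right, no smallness hypothesis on $M$ is required and this factor is simply absorbed into $C_1$.
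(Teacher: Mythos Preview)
Your argument is correct and is precisely the standard Caccioppoli computation that the paper has in mind when it states that the proof is exactly the same as that of \cite[Lemma 3.1]{BGM}; your choice of cutoff, the integration-by-parts identity, and the absorption of the Hardy potential via $|x|\ge r(1-a)/2$ on $\operatorname{supp}\eta$ all match that reference. The only cosmetic point is that the constant $C_1$ necessarily also depends on the ellipticity constant $\Lambda$ from \eqref{assump}, as you noted, even though the lemma statement does not list it explicitly.
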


\subsection{Exponential decay}

In order to proceed further, we fix some notations. Let $g=(g_{ij}(x, t))$ denotes the inverse of the coefficient matrix $A(x, t)$.  Consider the following weight.
\begin{align}
\label{weight-log-estimate}
\sigma(x,t)=\left(\sum_{i, j=1}^{n}g_{ij}(0, t)x_{i} x_{j}\right)^{\frac{1}{2}}.
\end{align}
In view of the  uniform ellipticity of $A$, it is not hard to see that \begin{equation}
\label{sigmaest}
    M|x|\leq \sigma(x,t)\leq N|x|
\end{equation}
where $N, M$ are  constants depending on the ellipticity constant of $A.$ We set \begin{equation}\label{lam}\lambda:=N/M\geq 1.\end{equation} With this new weight, we have the following Carleman estimate for the operator under consideration which is derived from \eqref{est1} using an appropriate change of variable.
\begin{lemma}
\label{carlemanL}
Let $A$ be as  in \eqref{assump}. For sufficiently large $\alpha$ of the form $ \alpha=\frac{n+1}{2} + k$  where $k \in \mathbb N$ and small $0<\ve=\ve(n)<<1$,  we have that  for $R_0 \leq c_0 \alpha^{-3/2}$ with $c_0$ sufficiently small, the following estimate  holds  for $v \in C_0^{\infty}((B_{R_0} \setminus \{0\}) \times (0,T))$. \begin{align}
	\label{L-1}
		&\alpha \int |\sigma(x,t)|^{-2\alpha-4} e^{2\alpha \sigma(x,t)^{\ve}} \, v^2  dxdt  +\alpha^3\int \sigma(x,t)^{-2\alpha- 4+\ve} e^{2\alpha \sigma(x,t)^{\ve}}  v^2dxdt \\  
	&+\frac{1}{\alpha}\int \sigma(x,t)^{-2\alpha-2} e^{2\alpha \sigma(x,t)^{\ve}} \, |\nabla v|^2  dxdt+\frac{1}{\alpha^3}\int \sigma(x,t)^{-2\alpha} e^{2\alpha \sigma(x,t)^{\ve}} \, |\nabla^2v|^2  dxdt\nonumber\\
	&\leq C\int \sigma(x,t)^{-2\alpha} e^{2\alpha \sigma(x,t)^{\ve}} (\operatorname{div}(A(x,t)\nabla v)-\partial_t v)^2  dxdt \nonumber
\end{align}

\end{lemma}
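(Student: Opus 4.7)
The plan is to reduce \eqref{L-1} to the heat-operator Carleman estimate \eqref{est1} via a $t$-dependent linear change of the spatial variable that flattens the weight $\sigma(x,t)$ to the Euclidean norm $|y|$, and then to absorb the perturbation errors using the higher-order terms already present on the LHS of \eqref{est1}. For each $t \in (0,T)$, since $g(0,t) = A(0,t)^{-1}$ is symmetric and uniformly positive definite with Lipschitz dependence on $t$, I let $B(t)$ be its symmetric positive square root, so that $B(t)^T B(t) = g(0,t)$ and $B(t) A(0,t) B(t)^T = I$. Setting $y = B(t) x$ and $u(y,t) = v(B(t)^{-1} y, t)$, one has the exact identity $\sigma(x,t) = |y|$; the Jacobian $\det B(t)^{-1}$ is bounded above and below uniformly in $t$ (by $\Lambda$), and $u$ is compactly supported in a punctured ball in $y$-space of radius comparable to $\lambda R_0$.

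A direct chain-rule computation then gives, at $x = B(t)^{-1} y$,
\begin{align*}
\operatorname{div}_x\bigl(A(x,t)\nabla_x v\bigr) - \partial_t v \;=\; \Delta_y u - \partial_t u + \mathcal{E}(y,t),
\end{align*}
where $\mathcal{E}$ collects two contributions: (i) the Lipschitz difference $A(x,t) - A(0,t) = O(|x|) = O(|y|)$, which after transformation yields a term bounded by $C(|y|\,|\nabla_y^2 u| + |\nabla_y u|)$; and (ii) the $t$-dependence of $B(t)$, which makes $\partial_t v = \partial_t u - \nabla_y u \cdot (\partial_t B(t)) x$ and contributes an $O(|y|\,|\nabla_y u|)$ term, since $\partial_t B$ is bounded by the $t$-Lipschitz assumption on $A$. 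Applying Theorem \ref{thm2} to $u$ and using $(a - b)^2 \le 2a^2 + 2b^2$ to bound $(\Delta_y u - \partial_t u)^2$ by twice the square of the actual operator plus $2|\mathcal{E}|^2$, the RHS of \eqref{est1} is dominated by a constant multiple of $\int |y|^{-2\alpha} e^{2\alpha|y|^{\ve}}\bigl(\operatorname{div}(A\nabla v) - \partial_t v\bigr)^2$ plus the error integrals
\begin{align*}
\int |y|^{-2\alpha+2} e^{2\alpha|y|^{\ve}}|\nabla_y^2 u|^2 \quad \text{and} \quad \int |y|^{-2\alpha} e^{2\alpha|y|^{\ve}}|\nabla_y u|^2.
\end{align*}
Under the constraint $|y| \le \lambda R_0 \le c_0 \lambda \alpha^{-3/2}$, the first integral is at most $c_0^2 \lambda^2 \alpha^{-3} \int |y|^{-2\alpha} e^{2\alpha|y|^{\ve}}|\nabla_y^2 u|^2$, which matches precisely the $\alpha^{-3}$ prefactor on the second-derivative term on the LHS of \eqref{est1}; the $|\nabla_y u|^2$ error is absorbed even more easily against the $\alpha^{-1}$-prefactor gradient term, losing a further factor $\alpha^{-2}$. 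Choosing $c_0$ small (depending only on $\lambda$ and the universal constants in \eqref{est1}) moves both errors to the LHS.

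Pulling back to $x$-coordinates via $x = B(t)^{-1} y$ restores the weight $\sigma(x,t)$ and the original parabolic operator on the RHS; since $\|B(t)\|,\|B(t)^{-1}\| \lesssim 1$ uniformly in $t$, the weighted $L^2$-norms of $u,\nabla_y u,\nabla_y^2 u$ are comparable to those of $v,\nabla_x v,\nabla_x^2 v$ with constants depending only on $\Lambda$, and the Jacobian is bounded. This yields \eqref{L-1}. The main obstacle I anticipate is the bookkeeping for the cross term $\nabla_y u \cdot (\partial_t B(t)) x$ introduced in $\partial_t u$, which has no stationary analogue and is exactly what requires $A$ to be Lipschitz in $t$; the rest is careful tracking of powers of $|y|$ and $\alpha$, and the scaling $R_0 \le c_0 \alpha^{-3/2}$ is dictated by the need to trade one power of $|y|^2$ against the $\alpha^{-3}$ prefactor on the second-derivative absorber.
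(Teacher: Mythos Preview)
Your proposal is correct and follows essentially the same approach as the paper: a $t$-dependent linear change of spatial variable (you use $B(t)=g(0,t)^{1/2}$, the paper uses its inverse $P(t)=A(0,t)^{1/2}$) flattens $\sigma$ to the Euclidean norm, after which Theorem~\ref{thm2} applies and the perturbation errors---an $O(|y|)|\nabla^2 u|$ term from the Lipschitz variation of $A$ and an $O(|y|)|\nabla u|$ term from $\partial_t B$---are absorbed via the constraint $R_0\le c_0\alpha^{-3/2}$ exactly as you describe. The only cosmetic difference is that the paper phrases this as changing variables inside the integrals of \eqref{est1} rather than ``applying \eqref{est1} to $u$,'' which sidesteps any worry about the $t$-regularity of $u$ coming from the merely Lipschitz $B(t)$.
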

\begin{proof}
To prove the Carleman type estimate \eqref{L-1}, we perform a suitable change of variable to the previous Carleman estimate \eqref{est1} for the heat operator. To start with, let $P(t):=(p_{ij}(t))_{n\times n}$ stand for the positive square root of the matrix $A(0,t).$  Now we  apply  the following change of variable $$y=P(s)x,\\~s=t$$ on the both side of our previous Carleman estimate \eqref{est1} for the heat operator.  Now under this change of variable, the transformations of the involved differential operators can be obtained by a straightforward calculation. However, for the sake of the convenience of the reader, we provide all the details. To begin with, we write 
   	\[u(x,t)=u(P(s)^{-1}y,s)=:v(y,s),~~y=P(s)x.\]
   	We now observe that \[\partial_iu=\sum_{k=1}^np_{ik}\partial_kv,~\text{and}~\partial_i\partial_ju=\sum_{k=1}^n\sum_{l=1}^{n}p_{ik}p_{lj}\partial_k\partial_lv\] leading to 
   	\[\nabla u=P(s).\nabla v, ~\Delta u= Tr(A(0,s).\nabla^2v),~\text{and}~ \nabla^2u=P(s)\nabla^2vP(s).\]
   	Also we note that
   	\[\partial_tu=((P)_s.x).\nabla v+\partial_sv=C(s,y).\nabla v+\partial_sv \]
   	where $C(s,y):=(P)_s(P(s))^{-1}y$ and $(P)_s$ is the $s$-partial derivative of the matrix $P(s)$.  Furthermore
   	$$|x|=|P(s)^{-1}y|=\left(\sum_{i, j=1}^ng_{ij}(0,t)y_iy_j\right)^{\frac12}=\sigma(y,s).$$ Note that $(g_{ij}(0,t))$ is the inverse of $A(0,t)$.
The above observations, in view of the aforementioned change of variable transforms \eqref{est1} to 
\begin{align}
	\label{L--1}
		&\alpha \int \sigma(y,s)^{-2\alpha-4} e^{2\alpha \sigma(y,s)^{\ve}} \, v^2  dyds  +\alpha^3\int \sigma(y,s)^{-2\alpha- 4+\ve} e^{2\alpha \sigma(y,s)^{\ve}}  v^2dyds  \\  
	&+\frac{1}{\alpha}\int \sigma(y,s)^{-2\alpha-2} e^{2\alpha \sigma(y,s)^{\ve}} \, |P(s).\nabla v|^2  dyds+\frac{1}{\alpha^3}\int \sigma(y,s)^{-2\alpha} e^{2\alpha \sigma(y,s)^{\ve}} \, |P(s)\nabla^2vP(s)|^2  dyds\nonumber\\
	&\leq C\int \sigma(y,s)^{-2\alpha} e^{2\alpha \sigma(y,s)^{\ve}} (Tr(A(0,s).\nabla^2v)-C(s,y).\nabla v-\partial_sv)^2   dy ds \nonumber
\end{align}
which clearly is valid for all $v\in C^{\infty}_0(B_{R_0}\times(0,T))$ where $R_0$ is small and depends also on the ellipticity of $A.$
	Now note that 
\begin{align*}
	(Tr(A(0,s).\nabla^2v)-C(s,y).\nabla v-\partial_sv)^2\leq 2|\operatorname{div}(A(y,s)\nabla v)-\partial_s v|^2+ 2I
\end{align*}
where the term $I$ is defined and estimated as follows using \eqref{assump}:
 \begin{align*}
	I:&=|Tr(A(0,s).\nabla^2v)-C(s,y).\nabla v-\operatorname{div}(A(y,s)\nabla v)|^2\\
	&\leq C_1( |\nabla v|^2+ |y|^2|\nabla^2v|^2).
\end{align*}
For convenience, changing the variable from $y$ to $x$ and $s$ to $t$,  in view of this observation and $\sigma(x,t)\leq N |x|$, from \eqref{L--1}, we obtain 
\begin{align}
	&\int \sigma(x,t)^{-2\alpha}e^{2\alpha\sigma(x,t)^\ve} (\operatorname{div}(A(x,t)\nabla v)-\partial_t v)^2dxdt \geq \\
		&+\alpha \int \sigma(x,t)^{-2\alpha-4} e^{2\alpha \sigma(x,t)^{\ve}} \, v^2  dxdt  +\alpha^3\int \sigma(x,t)^{-2\alpha- 4+\ve} e^{2\alpha \sigma(x,t)^{\ve}}  v^2dxdt \notag \\  
	&+\frac{1}{\alpha}\int\sigma(x,t)^{-2\alpha-2} e^{2\alpha \sigma(x,t)^{\ve}} \, |\nabla v|^2  dxdt+\frac{1}{\alpha^3}\int \sigma(x,t)^{-2\alpha} e^{2\alpha \sigma(x,t)^{\ve}} \, |\nabla^2v|^2  dxdt\nonumber\\
	&	-C_1N^2\int \sigma(x,t)^{-2\alpha-2}|x|^2e^{2\alpha\sigma(x,t)^\ve}|\nabla v|^2dxdt-	C_1\int \sigma(x,t)^{-2\alpha}|x|^2e^{2\alpha\sigma(x,t)^\ve}|\nabla^2v|^2dxdt\nonumber.
\end{align}
We  now take $R_0\leq c_0\alpha^{-3/2}$ for some suitable constant $c_0$ to be chosen later, then the above inequality transforms to 
\begin{align}
    &\int \sigma(x,t)^{-2\alpha}e^{2\alpha\sigma(x,t)^\ve} (\operatorname{div}(A(x,t)\nabla v)-\partial_t v)^2dxdt \geq \\
		&+\alpha \int \sigma(x,t)^{-2\alpha-4} e^{2\alpha \sigma(x,t)^{\ve}} \, v^2  dxdt  +\alpha^3\int \sigma(x,t)^{-2\alpha- 4+\ve} e^{2\alpha \sigma(x,t)^{\ve}}  v^2dxdt \notag \\  
	&+\frac{1}{\alpha}(1-C_1N^2c_0^2\alpha^{-2})\int\sigma(x,t)^{-2\alpha-2} e^{2\alpha \sigma(x,t)^{\ve}} \, |\nabla v|^2  dxdt\notag \\&+\frac{1}{\alpha^3}(1-C_2c_0^2)\int \sigma(x,t)^{-2\alpha} e^{2\alpha \sigma(x,t)^{\ve}} \, |\nabla^2v|^2  dxdt.\nonumber
\end{align}
Choosing $c_0$ such that $C_2c_0^2<1/2$, we  thus conclude that 
\begin{align}
	\label{est:f1}
		&C\int \sigma(x,t)^{-2\alpha}e^{2\alpha|x|^\ve} (\operatorname{div}(A(x,t)\nabla v)-\partial_t v)^2dxdt \geq \\
	&+\alpha \int\sigma(x,t)^{-2\alpha-4} e^{2\alpha \sigma(x,t)^{\ve}} \, v^2  dxdt  +\alpha^3\int \sigma(x,t)^{-2\alpha- 4+\ve} e^{2\alpha \sigma(x,t)^{\ve}}  v^2dxdt \notag \\  
	&+\frac{1}{2\alpha}\int \sigma(x,t)^{-2\alpha-2} e^{2\alpha\sigma(x,t)^{\ve}} \, |\nabla v|^2  dxdt+\frac{1}{2\alpha^3}\int \sigma(x,t)^{-2\alpha} e^{2\alpha \sigma(x,t)^{\ve}} \, |\nabla^2v|^2dxdt\nonumber
\end{align}
 from which the lemma follows. 
 \end{proof}
 With  the estimate in \eqref{L-1} in hand, we now show that solutions to \eqref{main-diff-ineq} decay exponentially when they vanish to infinite order at the origin.
\begin{prop}\label{expdecay}
Let $u$ be a solution to  \eqref{main-diff-ineq} in $B_R \times (-T, T)$ such that $u$ vanishes to infinite order at $0$ in the sense of \eqref{vp}. Then  $u$ satisfies 
\begin{equation}
    \int_{B_s\times (-T/2,T/2)}u^2\lesssim e^{-\frac{C}{s^{2/3}}},~\text{as}~s\rightarrow0
\end{equation}
for some constant $C>0$.
\label{expo-prop}
\end{prop}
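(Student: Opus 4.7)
The plan is to apply the scaling-critical Carleman estimate \eqref{L-1} to $v = \phi\eta u$, where $\phi(x)$ and $\eta(t)$ are cutoffs localizing to an annulus $\{s_0 \le |x| \le R_0/2\}$ and to the time interval $(-3T/4, 3T/4)$ respectively; absorb the singular term $M|u|/|x|^2$ into the LHS for large $\alpha$; carefully estimate the cutoff errors by exploiting Caccioppoli (Lemma \ref{cac-bgg}) together with the infinite vanishing hypothesis \eqref{vp}; and finally choose $\alpha$ as a function of $s$ in a way that forces the desired $e^{-C/s^{2/3}}$ decay. The key scaling relation is the constraint $R_0 \le c_0\alpha^{-3/2}$ from Lemma \ref{carlemanL}: matching $\alpha = A\,s^{-2/3}$ with $A$ small (so that $R_0 = c_0 A^{-3/2} s \ge 8 N s$) makes the ratio $s/R_0$ a fixed constant smaller than $1/N$, and $(s/R_0)^\alpha$ produces the exponential decay.

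Concretely, pick a radial cutoff $\phi = \phi_{s_0}$ equal to $1$ on $\{2s_0 \le |x| \le R_0/4\}$ and vanishing off $\{s_0 \le |x| \le R_0/2\}$ (with $|\nabla^j\phi| \lesssim s_0^{-j}$ near the origin and $|\nabla^j\phi| \lesssim R_0^{-j}$ near the outer boundary), and a time cutoff $\eta$ equal to $1$ on $(-T/2, T/2)$ supported in $(-3T/4, 3T/4)$. Applying \eqref{L-1} (extended to the interval $(-T,T)$ by translation invariance in $t$) to $v = \phi\eta u$ and expanding
\[
\operatorname{div}(A\nabla v) - v_t = \phi\eta\bigl[\operatorname{div}(A\nabla u) - u_t\bigr] + 2\eta\, A\nabla\phi\cdot\nabla u + \eta u\,\operatorname{div}(A\nabla\phi) - \phi\eta'u,
\]
the differential inequality \eqref{main-diff-ineq} combined with $|x|^{-4} \lesssim \sigma^{-4}$ (from \eqref{sigmaest}) gives
\[
\int \sigma^{-2\alpha} e^{2\alpha\sigma^\ve}\phi^2\eta^2|\operatorname{div}(A\nabla u) - u_t|^2 \lesssim M^2 \int \sigma^{-2\alpha-4} e^{2\alpha\sigma^\ve}\phi^2\eta^2 u^2,
\]
which is absorbed into the first term on the LHS of \eqref{L-1} for $\alpha$ larger than a constant multiple of $M^2$. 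The remaining cutoff-error contributions split into: (i) an inner-annulus error on $\{s_0 \le |x|\le 2s_0\}$, bounded by $C_k s_0^{k-2\alpha-6}e^{2\alpha s_0^\ve}$ using Caccioppoli and \eqref{vp} (hence $\to 0$ as $s_0 \to 0$ for $k$ large); (ii) an outer-annulus error on $\{R_0/4 \le |x|\le R_0/2\}$, bounded by $R_0^{-2\alpha-4}e^{2\alpha R_0^\ve}$ using the $L^2$ bound on $u$; and (iii) a time-boundary error on $\{|t| \in (T/2,3T/4)\}$, handled via a dyadic decomposition of the $x$-support of $\phi$ into shells $\{2^j s_0 \le |x| \le 2^{j+1}s_0\}$ and applying \eqref{vp} on each shell to yield, for $k = 2\alpha + n + 5$, a bound $\lesssim R_0^{n+5}e^{2\alpha R_0^\ve}$.

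Letting $s_0 \to 0$ (which is justified by dominated convergence since \eqref{vp} makes $\int \sigma^{-2\alpha-4}u^2$ integrable near the origin), restricting the LHS to $B_s\times(-T/2,T/2)$ (permitted because $s \le R_0/8$), and using $\sigma \le N s$ on this set to bound $\sigma^{-2\alpha-4} \ge (Ns)^{-2\alpha-4}$ yields
\[
\int_{B_s\times(-T/2,T/2)} u^2 \;\lesssim\; \frac{1}{\alpha}\Bigl(\frac{Ns}{R_0}\Bigr)^{2\alpha+4} e^{2\alpha R_0^\ve}.
\]
Substituting $R_0 = c_0 A^{-3/2}s$ and $\alpha = A s^{-2/3}$ gives $Ns/R_0 = NA^{3/2}/c_0 =: \rho$, and choosing $A$ small forces $\rho < 1$, so $(Ns/R_0)^{2\alpha+4} \le e^{-2\alpha\log(1/\rho)}$; meanwhile $2\alpha R_0^\ve = 2c_0^\ve A^{1-3\ve/2} s^{\ve-2/3}$ is of strictly smaller order than $\alpha = A s^{-2/3}$ in $s^{-1}$ whenever $\ve < 2/3$. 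Hence the exponent $-2\alpha\log(1/\rho) + 2\alpha R_0^\ve$ is dominated by $-\alpha\log(1/\rho)$ for $s$ small, giving the claimed $e^{-C/s^{2/3}}$ bound. The subtlest step of the argument is the time-boundary error (iii): no vanishing is available at $|t|\sim T/2$ and the weight $\sigma^{-2\alpha}$ is singular at $x=0$, so one must exploit the full strength of \eqref{vp} scale by scale through the dyadic decomposition, in the spirit of \cite{BGM}.
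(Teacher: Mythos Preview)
Your handling of the time-boundary error (iii) contains a genuine gap that the paper's proof is specifically designed to avoid. When you dyadically decompose $\{s_0\le |x|\le R_0/2\}$ and apply \eqref{vp} on each shell with $k=2\alpha+n+5$, the resulting bound is not $R_0^{n+5}e^{2\alpha R_0^\ve}$ but rather $C_k\,R_0^{n+5}e^{2\alpha R_0^\ve}$ (up to factors like $(2/M)^{2\alpha}$), where $C_k$ is the implicit constant in \eqref{vp}. Since you then let $\alpha\sim s^{-2/3}\to\infty$, you are invoking \eqref{vp} with $k\to\infty$, and nothing in the hypothesis controls the growth of $C_k$ in $k$. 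A function vanishing to infinite order but only like $e^{-1/|x|^{1/3}}$ would have $C_k$ growing faster than any exponential in $k$, and your final inequality would be vacuous. In short, the dyadic argument only gives a bound depending on the \emph{a priori unknown} rate of vanishing of $u$, which is precisely what the proposition is trying to quantify.

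The paper circumvents this by never appealing to \eqref{vp} with $k$ depending on $\alpha$. Instead it uses \emph{no} inner spatial cutoff ($\varphi\equiv 1$ on $B_{r_1}$), and for the time cutoff it takes the specific profile
\[
\eta(t)=\exp\!\left(-\frac{T^3(T_2+t)^4}{(T_1+t)^3(T_1-T_2)^4}\right)
\]
on $(-T_1,-T_2)$. The crucial point is that the Carleman estimate \eqref{L-1} contains on its left side the term $\alpha^3\int\sigma^{-2\alpha-4+\ve}e^{2\alpha\sigma^\ve}v^2$, which you discard. The paper keeps it as a \emph{negative} contribution on the right and pairs it with the $\eta_t$-error: on the set where $C\sigma^3\eta_t^2/\eta^2\le\alpha^3$ the combination is nonpositive, while on the complementary set $U$ one checks directly, using the explicit form of $\eta$, that $\sigma^{-2\alpha-1+\ve}e^{2\alpha\sigma^\ve}\eta\le 1$ for large $\alpha$. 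This yields a bound on the time-boundary error by $C\int_{B_R\times(-T,T)}u^2$ with a constant \emph{independent of $\alpha$}. That independence is exactly what makes the final choice $\alpha\sim r_1^{-2/3}$ produce genuine exponential decay.
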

   \begin{proof}
Let $u$ be as in the statement of the proposition. For a given $r_1>0$ sufficiently small,  we let \[v(x,t)=u(x,t)\varphi(x)\eta(t)\] where $\varphi$ and $\eta$ are compactly supported functions which  are chosen as follows.

We let $\varphi$ such that
\begin{equation}
	\label{choice:phi}
	\varphi(x):=\begin{cases}
		1,~\text{if}~x\in B(0,r_1),\\
		0,~\text{if}~|x|>r_2
	\end{cases}
\end{equation}
where $r_2=  4\lambda^2 r_1< R$ with $\lambda$  as in \eqref{lam}.      As in  \cite{BGM,V}, we let  $T_1= \frac{3T}{4}$, $T_2= \frac{T}{2}$ and $\eta(t)$ be a smooth even function such that $\eta(t) \equiv 1$ when $|t| < T_2$, $\eta(t) \equiv 0$, when $|t| > T_1$. More precisely
\begin{equation}\label{choice:eta}
	\eta(t)= \begin{cases} 0\ \ \ \ \ \ \ \ \ -T\le t\le -T_1
		\\
		\exp \left(-\frac{T^3(T_2+t)^4}{(T_1 +t)^3(T_1-T_2)^4} \right)\ \ \ \ \ \ \ -T_1\le t \le -T_2,
		\\
		1,\ \ \ \ \ \ \   -T_2\le t \le 0.
	\end{cases}
\end{equation} 

  It is to be noted over here that from Remark \ref{H2} it follows that $\nabla u, \nabla^2 u, u_t$ vanishes to infinite order in the sense of \eqref{vp} as well and therefore by a standard   limiting argument, one can show that the Carleman estimate \eqref{L-1} can be applied to $v$ as above.  We denote the operator $\operatorname{div}(A(x,t) \nabla)$ by $\mathcal L$.  An easy calculation yields 
\[\mathcal{L}v-\partial_t v=\varphi \eta (\mathcal{L}u-\partial_t u)+2\langle A\nabla u, \nabla \varphi \rangle \eta+u(\eta ~\operatorname{div}(A\nabla\varphi)-\varphi\eta_t).\]

Also from \eqref{assump} and given our choice of $\varphi$ as in \eqref{choice:phi} it follows that
\begin{equation}\label{er}
\operatorname{div}(A\nabla\varphi) \leq C (|\nabla \varphi| + |\nabla^2 \varphi| ).\end{equation}
Using \eqref{er} we have
\begin{align}
	\label{L-2}
	&\int \sigma(x,t)^{-2\alpha}e^{2\alpha\sigma(x,t)^\ve} (\operatorname{div}(A(x,t)\nabla v)-\partial_t v)^2dxdt\\
	&\lesssim \int \sigma(x,t)^{-2\alpha}e^{2\alpha \sigma(x,t)^{\ve}}\varphi^2\eta^2(\operatorname{div}(A(x,t)\nabla u)-\partial_tu)^2+\int \sigma(x,t)^{-2\alpha}e^{2\alpha \sigma(x,t)^{\ve}}u^2\varphi^2\eta_t^2\notag\\
	&+\int \sigma(x,t)^{-2\alpha}e^{2\alpha \sigma(x,t)^{\ve}}\left[|\nabla u|^2|\nabla \varphi|^2+u^2|\nabla^2\varphi|^2+u^2|\nabla\varphi|^2\right]\eta^2\notag.  
\end{align}
 Now using \eqref{main-diff-ineq}  in \eqref{L-2} we find that $\int_{B_R\times (0,T)} \sigma(x,t)^{-2\alpha}e^{2\alpha\sigma(x,t)^\ve} (\operatorname{div}(A(x,t)\nabla v)-\partial_t v)^2dxdt$ can be estimated as \begin{align}
	\label{L-3}
	&\int_{B(0,R)\times (0,T)} \sigma(x,t)^{-2\alpha}e^{2\alpha\sigma(x,t)^\ve} (\operatorname{div}(A(x,t)\nabla v)-\partial_t v)^2dxdt\\
	&\lesssim \int \sigma(x,t)^{-2\alpha}|x|^{-4}e^{2\alpha \sigma(x,t)^{\ve}}\varphi^2\eta^2u^2+\int \sigma(x,t)^{-2\alpha}e^{2\alpha \sigma(x,t)^{\ve}}u^2\varphi^2\eta_t^2\notag\\
	&+\int_{\{r_1<|x|<r_2\}\times (-T_1,T_1)} \sigma(x,t)^{-2\alpha}e^{2\alpha \sigma(x,t)^{\ve}}\left[|\nabla u|^2|\nabla \varphi|^2+u^2(\Delta\varphi)^2+u^2|\nabla\varphi|^2\right]\eta^2\notag\\
	&\lesssim \int \sigma(x,t)^{-2\alpha-4}e^{2\alpha \sigma(x,t)^{\ve}}v^2+\int \sigma(x,t)^{-2\alpha}e^{2\alpha \sigma(x,t)^{\ve}}u^2\varphi^2\eta_t^2\notag\\
	&+\int_{\{r_1<|x|<r_2\}\times (-T_1,T_1)}\sigma(x,t)^{-2\alpha}e^{2\alpha \sigma(x,t)^{\ve}}\left[|\nabla u|^2|x|^{-2}+u^2|x|^{-4}+u^2|x|^{-2}\right]\eta^2.\notag
\end{align}
In \eqref{L-3} we used that $\nabla \varphi, \nabla^2 \varphi$ are supported in $\{r_1 < |x| < r_2\}$ and satisfies the following bounds
\begin{equation*}
|\nabla \varphi| \leq \frac{C}{|x|},\ |\nabla^2 \varphi|\leq \frac{C}{|x|^2}.\end{equation*}
Now by applying the Carleman estimate \eqref{L-1} to $v$, we obtain using \eqref{L-3} that the following holds
\begin{align}
	\label{est:f2}
&\alpha \int \sigma(x,t)^{-2\alpha-4} e^{2\alpha \sigma(x,t)^{\ve}} \, v^2  
+\frac{1}{2\alpha}\int \sigma(x,t)^{-2\alpha-2} e^{2\alpha \sigma(x,t)^{\ve}} \, |\nabla v|^2+\frac{1}{2\alpha^3}\int \sigma(x,t)^{-2\alpha} e^{2\alpha \sigma(x,t)^{\ve}} \, |\nabla^2v|^2 \\
&\lesssim\int \sigma(x,t)^{-2\alpha}|x|^{-4}e^{2\alpha \sigma(x,t)^{\ve}}\varphi^2\eta^2u^2+ \int \sigma(x,t)^{-2\alpha}e^{2\alpha \sigma(x,t)^{\ve}}u^2\varphi^2\eta_t^2-\alpha^3\int \sigma(x,t)^{-2\alpha- 4+\ve} e^{2\alpha \sigma(x,t)^{\ve}}  v^2\notag\\
&+\int_{\{r_1<|x|<r_2\}\times (-T_1,T_1)} e^{2\alpha \sigma(x,t)^{\ve}}\sigma(x,t)^{-2\alpha-4}u^2\eta^2
+\int_{\{r_1<|x|<r_2\}\times (-T_1,T_1)}e^{2\alpha \sigma(x,t)^{\ve}}\sigma(x,t)^{-2\alpha-2}|\nabla u|^2\eta^2\notag\\
&=:  \int \sigma(x,t)^{-2\alpha}|x|^{-4}e^{2\alpha \sigma(x,t)^{\ve}}\varphi^2\eta^2u^2 +    I_1+I_2+I_3+I_4\notag .
\end{align}
 We  then note that  since $\sigma(x,t) \sim |x|$, therefore if $\alpha$ is chosen large enough, then the integral  \\ $\int \sigma(x,t)^{-2\alpha}|x|^{-4}e^{2\alpha \sigma(x,t)^{\ve}}\varphi^2\eta^2u^2$ can be  absorbed into the term 
 $\alpha \int \sigma(x,t)^{-2\alpha-4} e^{2\alpha \sigma(x,t)^{\ve}} \, v^2$ which appears on the left hand side. 
 
 Now we estimate each $I_j$ separately. Let us first introduce some notations which will be used throughout the rest of the proof. 
\begin{align}\label{sigmas}
    \sigma_1(r_1, r_2, T):=\inf_{\{r_1<|x|<r_2\}\times (-T, T)}\sigma(x,t),~\text{and}~ \sigma_2(r_1, r_2, T):=\sup_{\{r_1<|x|<r_2\}\times (-T, T)}\sigma(x,t).
\end{align}  We first  note that 
it is easily seen that 
\begin{align}
	\label{est:I3} 
	I_3\lesssim  \sigma_1(r_1, r_2, T)^{-2\alpha-4}e^{2\alpha \sigma_2(r_1, r_2, T)^\ve}\int_{B(0,R)\times(-T,T)}u^2.
\end{align}
Similarly we have
\begin{align}\label{i4}
	I_4\lesssim \sigma_1(r_1, r_2, T)^{-2\alpha-2}e^{2\alpha \sigma_2(r_1, r_2, T)^\ve}\int_{\{r_1<|x|<r_2\}\times (-T_1,T_1)}|\nabla u|^2
\end{align}
Using the energy estimate in  Lemma~\ref{cac-bgg} we have 
\begin{align*}
&\int_{\{r_1<|x|<r_2\}\times (-T_1,T_1)}|\nabla u|^2\leq \frac{C}{r_1^2}\int_{B(0,R)\times (-T,T)}u^2
\end{align*}
which in particular implies
\begin{align}
\label{est:I4}
I_4\lesssim \sigma_1(r_1, r_2, T)^{-2\alpha-2}r_1^{-2}e^{2\alpha \sigma_2(r_1, r_2, T)^\ve}\int_{B(0,R)\times (-T,T)}u^2.
\end{align}
We now our attention to $I_1.$ To begin with, we break the integral into two parts as follows:

\begin{align*}
	I_1&= \int_{B_{r_1}\times (-T_1,T_1)}+\int_{\{r_1<|x|<r_2\}\times (-T_1,T_1)} \left(\sigma(x,t)^{-2\alpha}e^{2\alpha \sigma(x,t)^{\ve}}u^2\varphi^2\eta_t^2\right)\\
	&=:I_{11}+I_{12}
	\end{align*}
 The same technology as above, along with the estimate $|\eta_t|\lesssim \frac{1}{T}$ yields
 \begin{align}
 	\label{est:I11I13}
 	I_{12}\lesssim \sigma_1(r_1, r_2, T)^{-2\alpha}e^{2\alpha \sigma_2(r_1, r_2, T)^\ve} \int_{B(0,R)\times(-T,T)}u^2.
 \end{align}
We are  thus left with $I_{11}$, which will be estimated together with $I_2.$ \\
Recalling that by our choice  $\varphi(x)=1$ when $|x|<r_1$, and $supp(\eta_t)\subset (-T_1, -T_2)\cup (T_2,T_1)$ we observe that  
\begin{align*}
I_{11}+I_2\lesssim \int_{\Omega} \sigma(x,t)^{-2\alpha-4+\ve}e^{2\alpha \sigma(x,t)^{\ve}}u^2\eta^2\left(\sigma(x,t)^3\frac{\eta_t^2}{\eta^2}-\alpha^3\right)
\end{align*}
where $\Omega:=B_{r_1}\times [(-T_1, -T_2)\cup (T_2,T_1)].$ We now adapt some ideas from \cite{V} ( see also \cite{BGM}) to show that the following estimate holds\begin{align}
\label{est:I12I2}
I_{11}+I_2\lesssim \int_{B(0,R)\times (-T,T)}u^2.
\end{align}
First note that it is sufficient to prove \eqref{est:I12I2}  over the region $\Omega^{-} := B_{r_1} \times (-T_1, -T_2)$, since the other part can be handled by symmetry. Now, if $-T_1\le t\le -T_2$, note that $T_1 -T_2 = \frac T4$, $|T_2 + t| \le T_1 - T_2 = \frac T4$, and that $\frac 34 T \le 4T_1- 3T_2+t \le T$, it follows 
\begin{equation*}\label{y2}
\bigg|\frac{\eta_t}{\eta}\bigg| =\bigg|\frac{T^3 (T_2 + t)^3 (4T_1- 3T_2+t)}{ (T_1-T_2)^4 (T_1 +t)^4}\bigg| \leq   \frac{4 T^3}{|T_1 +t|^4}.
\end{equation*}
Consequently we obtain
\begin{align*}
&\int_{\Omega^{-}} \sigma(x,t)^{-2\alpha-4+\ve}e^{2\alpha \sigma(x,t)^{\ve}}u^2\eta^2\left(\sigma(x,t)^3\frac{\eta_t^2}{\eta^2}-\alpha^3\right)
 \leq \int_{\Omega^{-}} \sigma(x,t)^{-2\alpha-4+\ve}e^{2\alpha \sigma(x,t)^{\ve}}u^2\eta^2 \left(C \sigma(x,t)^3  \frac{T^6}{(T_1 +t)^8}  - \alpha^3\right).
\end{align*}
Trivially
\begin{align}
\label{eta-t-triv} 
\notag&\int_{\Omega^{-}} \sigma(x,t)^{-2\alpha-4+\ve}e^{2\alpha \sigma(x,t)^{\ve}}u^2\eta^2\left(\sigma(x,t)^3\frac{\eta_t^2}{\eta^2}-\alpha^3\right)\\
&\leq \int_{U} \sigma(x,t)^{-2\alpha-4+\ve}e^{2\alpha \sigma(x,t)^{\ve}}u^2\eta^2 \left(C \sigma(x,t)^3  \frac{T^6}{(T_1 +t)^8}  - \alpha^3\right),
\end{align}
where
\begin{equation}
    \label{eta-t-useful}
U:=\bigg\{(x,t)\in \Omega^-: \alpha^3\leq C \sigma(x,t)^3 \frac{T^6}{(T_1+t)^8}\bigg\}.     
\end{equation}
Thus it follows
\begin{align}\label{c7}
&\int_{\Omega^{-}} \sigma(x,t)^{-2\alpha-4+\ve}e^{2\alpha \sigma(x,t)^{\ve}}u^2\eta^2\left(\sigma(x,t)^3\frac{\eta_t^2}{\eta^2}-\alpha^3\right)\\
& \leq C \int_{U}  \sigma(x,t)^{-2\alpha-1+\ve}e^{2\alpha \sigma(x,t)^{\ve}}u^2 \eta \frac{\eta T^6}{(T_1 +t)^8}.\notag
\end{align}
Now in order to substantiate our claim we establish a bound from above for the quantity $$ \sigma(x,t)^{-2\alpha-1+\ve}e^{2\alpha \sigma(x,t)^{\ve}}\eta \frac{\eta T^6}{(T_1 +t)^8}$$ in $U$. Appealing to the exponential decay of $\eta$, at $t = - T_1$, see \eqref{choice:eta}, we obtain for $t \in (-T_1, -T_2)$, 
\begin{equation}\label{bn1}
\frac{\eta T^6}{(T_1+ t)^8} \leq 	C.
\end{equation}
Therefore, we will be done if we can prove that 
\begin{align*}
\notag &\sigma(x,t)^{-2\alpha-1+\ve}e^{2\alpha \sigma(x,t)^{\ve}}\eta\leq 1
\end{align*}
which, in view of the expression for $\eta$, is equivalent to proving 
\begin{align}
    \label{pmain-eq}  (2\alpha+1-\ve)\log\sigma(x,t)-2\alpha \sigma(x,t)^\ve+\frac{T^3(T_2+t)^4}{(T_1 +t)^3(T_1-T_2)^4}\geq 0
\end{align}
 for $(x,t) \in U$. Now in what follows, we prove \eqref{pmain-eq}. For that first note that, inside the region $U$, we have 
\begin{equation*}
\frac{T_1 +t}{T} \leq \left(\frac{C}{T^2}\right)^{1/8}\left(\frac{\sigma(x,t)}{\alpha}\right)^{3/8}
 \leq C \left(\frac{\sigma(x,t)}{\alpha}\right)^{3/8},
\end{equation*}
for some universal $C>0$ depending also on $T$. For sufficiently large  $\alpha$ we have 
\[
C \left(\frac{\sigma(x,t)}{\alpha}\right)^{3/8} \le C_1 \left(\frac{R}{\alpha}\right)^{3/8} \le \frac1{12}. 
\]Combining the above, we have
\begin{equation}\label{pp1-bgg}
\frac{T_1 +t }{T} \leq \frac1{12},
\end{equation}
in $U$, provided $\alpha$ is large enough. Also, $\frac T4 = T_1 - T_2 = T_1 + t + |T_2 + t|$, from \eqref{pp1-bgg} we conclude that we must have in $U$
\[
|T_2 + t| \geq \frac{T}{6}.
\]
It thus follows
\begin{align}\label{log-condn12} &(2\alpha+1-\ve)\log\sigma(x,t)-2\alpha \sigma(x,t)^\ve+\frac{T^3(T_2+t)^4}{(T_1 +t)^3(T_1-T_2)^4}\\
\notag &\geq \left(\frac{4}6\right)^{4} \left(\frac 2C\right)^{3/8} T^{3/4} \left(\frac{\alpha}\sigma\right)^{9/8}-(2\alpha+1-\ve)\log\frac{1}{\sigma(x,t)}-2\alpha \sigma(x,t)^\ve.
\end{align}
Now if we choose  $\alpha$ very large, in view of the fact that  the exponent $\alpha^{9/8}$ wins over  the linear term in $\alpha$, we infer that the last expression in \eqref{log-condn12} is non-negative proving \eqref{pmain-eq}.  Therefore, the estimate \eqref{est:I12I2} holds.

Therefore using \eqref{est:I3},\eqref{est:I4}, \eqref{est:I11I13} and \eqref{est:I12I2} in \eqref{est:f2}, we conclude that 
\begin{align}
\label{est:f3}
\alpha \int \sigma(x,t)^{-2\alpha-4} e^{2\alpha \sigma(x,t)^{\ve}} \, v^2 \lesssim \sigma_1(r_1, r_2, T)^{-2\alpha-4}e^{2\alpha \sigma_2(r_1, r_2, T)^\ve} \int_{B(0,R)\times (-T,T)}u^2.
\end{align}
We now estimate the left hand side of the above inequality in the following way
\begin{align*}
\alpha \int \sigma(x,t)^{-2\alpha-4} e^{2\alpha \sigma(x,t)^{\ve}} \, v^2&\geq \alpha \int_{B_{r_1/8\lambda}\times (-T_2,T_2)}\sigma(x,t)^{-2\alpha-4} e^{2\alpha \sigma(x,t)^{\ve}} \, v^2\\
&\geq \alpha \tilde \sigma_2(r_1/8\lambda, T)^{-2\alpha-4}\int_{B(0, r_1/8\lambda)\times (-T_2,T_2)}u^2
\end{align*}
where $$\tilde  \sigma_2(r_1/8\lambda,T):=\sup_{B_{r_1/8 \lambda}\times (-T, T)}\sigma(x,t)$$ and $\lambda$ is as in \eqref{lam}.
This, along with \eqref{est:f3} yields
\begin{align}\label{pun}
\alpha \int_{B_{ r_1/8\lambda} \times (-T_2,T_2)}u^2\lesssim  \left(\frac{\sigma_1(r_1, r_2, T)}{\tilde\sigma_2(r_1/8\lambda,T)}\right)^{-2\alpha-4}e^{2\alpha \sigma_2(r_1, r_2, T)^\ve} \int_{B(0,R)\times (-T,T)}u^2.
\end{align}
Now from  \eqref{sigmaest} we find
\begin{align}\label{pun1}
    \frac{\sigma_1(r_1, r_2, T)}{\tilde \sigma_2(r_1/8\lambda, T)}\geq \frac{Mr_1}{Nr_1/8\lambda}=8.
\end{align}
Using \eqref{pun1} in \eqref{pun} we then deduce the following inequality
\begin{align*}
    \int_{B_{ r_1/8\lambda} \times (-T_2,T_2)}u^2\lesssim 2^{-2\alpha} 4^{-2\alpha}e^{2\alpha \sigma_2(r_1, r_2, T)^\ve}.
\end{align*}
Next observe that \begin{align}&4^{-2\alpha}e^{2\alpha\sigma_{2}(r_1,r_2, T)^{\ve}}\leq e^{-2\alpha(2\log 2-(N r_2)^\ve)} \leq 1\\ & (\text{provided $r_1$ and consequently $r_2$ is small enough since $r_2= \lambda^2 r_1$}.)\notag\end{align}   Therefore we obtain\begin{align}\label{res1}
    \int_{B_{r_1/8 \lambda} \times (-T_2,T_2)}u^2\lesssim 2^{-2\alpha}.
\end{align}
At this point, in view of  Lemma \ref{carlemanL}, by letting $\alpha \sim \left(\frac{1}{r_1} \right)^{2/3}$, we find that the conclusion of the lemma follows from \eqref{res1}. \end{proof}

\subsection{Carleman estimate II}
Proposition \ref{expdecay} shows that solutions to \eqref{main-diff-ineq}  decay exponentially at the origin when the vanish to infinite order in the sense of \eqref{vp}. We now show  by means of a parabolic generalization of a Carleman estimate due to  Regbaoui in \cite{Reg}  which uses strictly convex  weights  that non-trivial solutions to  \eqref{main-diff-ineq} in fact decay less than exponentially which would then lead to a contradiction  and would thus establish Theorem \ref{thm1-bgg}.

We start by recalling a few  notations. As before,  $g=(g_{ij}(x, t))$ will denote the inverse of the coefficient matrix $A(x, t)$ and $\sigma(x,t)$ is as in \eqref{weight-log-estimate}.  For notational convenience, we will often denote $\sigma(x,t)$ by $\sigma$ whenever the context is clear.  Before proceeding further, we would like to alert the reader that for notational convenience,  we set 
$$ \mathcal L= \operatorname{div}(A(x,t)\nabla).$$

The following vector field will play a pervasive role in our analysis.
\begin{align}
\label{vector-field}
Z=\sigma\sum_{i, j=1}^{n} a_{ij}\partial_{i}\sigma\partial_{j}. 
\end{align}
We now collect some basic properties related to  $Z$ and $\sigma$ which is easily verified using \eqref{assump} and the fact that the matrix valued function $(g_{ij})$ is the inverse of $(a_{ij})$.

\begin{prop}
\label{useful-estimates}
The following holds true:
\begin{enumerate}[i)]
    \item $|<ZA\nabla u, \nabla u>| \leq C \sigma |\nabla u|^2$.
		\item $|[\partial_i, Z]u -\partial_i u| \leq C \sigma |\nabla u|$,\ \ \ \ $i=1,...,N$.
		\item  $\partial_i Z_j = \delta_{ij} + O(|x|).$
		\item $Z\sigma = \sigma + O(|x|^2).$
\end{enumerate}
\end{prop}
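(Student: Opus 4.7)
The plan is to treat all four items as consequences of a single Taylor expansion of the vector field $Z$ about the origin in the $x$-variable, exploiting the fact that $(g_{ij}(0,t))$ is the inverse of $(a_{ij}(0,t))$ and that $A$ is Lipschitz in $x$. Concretely, differentiating $\sigma^2 = \sum_{k,l} g_{kl}(0,t)x_k x_l$ yields $\partial_j\sigma = \sigma^{-1}\sum_l g_{jl}(0,t)x_l$, so that $|\partial_j\sigma| = O(1)$ uniformly in $(x,t)$ thanks to ellipticity. Moreover, since $\sum_j a_{ij}(0,t)g_{jl}(0,t) = \delta_{il}$ and $|a_{ij}(x,t)-a_{ij}(0,t)| \le K|x|$ by \eqref{assump}, one obtains
\begin{equation*}
\sum_j a_{ij}(x,t)g_{jl}(0,t) = \delta_{il} + O(|x|).
\end{equation*}

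Plugging these into the defining formula $Z_j = \sigma \sum_i a_{ij}(x,t)\partial_i\sigma$ and using symmetry of $A$, I would obtain
\begin{equation*}
Z_j = \sum_{i,l} a_{ij}(x,t)g_{il}(0,t)x_l = x_j + O(|x|^2),
\end{equation*}
which is the key identity. Item (iii) then follows by differentiating in $x_i$ and using that the $O(|x|^2)$ remainder is a polynomial in $x$ with $x$-Lipschitz coefficients, hence its gradient is $O(|x|)$. Item (ii) is an immediate consequence of (iii) together with the elementary commutator identity $[\partial_i, Z]u = \sum_j (\partial_i Z_j)\partial_j u$, after recalling that $\sigma \sim |x|$ by \eqref{sigmaest}. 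For (iv), I would compute
\begin{equation*}
Z\sigma = \sum_j Z_j\,\partial_j \sigma = \sum_j x_j\,\partial_j\sigma + O(|x|^2),
\end{equation*}
and use the homogeneity identity $\sum_j x_j\partial_j\sigma = \sigma^{-1}\sum_{j,l}g_{jl}(0,t)x_j x_l = \sigma$, which follows directly from the formula for $\partial_j\sigma$.

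Finally, for (i), I interpret $ZA$ as the matrix whose entries are $(ZA)_{ij} = \sum_k Z_k\,\partial_k a_{ij}$, i.e.\ $Z$ applied componentwise to $A$. Since $|Z_k| \le C\sigma$ (from $Z_k = x_k + O(|x|^2)$ and $\sigma \sim |x|$) and $|\partial_k a_{ij}| \le K$ by the Lipschitz hypothesis \eqref{assump}, one gets the pointwise bound $|(ZA)_{ij}| \le C\sigma$, from which the Cauchy--Schwarz estimate $|\langle ZA\,\nabla u, \nabla u\rangle| \le C\sigma|\nabla u|^2$ is immediate. No step is genuinely hard; the only point requiring mild care is the interchange of the freezing of coefficients at $x=0$ with the global evaluation at $(x,t)$, which is exactly where the Lipschitz assumption \eqref{assump} enters as an $O(|x|)$ error, and it is this error that propagates through all four bounds.
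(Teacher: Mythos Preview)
Your proof is correct and follows essentially the same approach as the paper: both hinge on writing $a_{ij}(x,t)=a_{ij}(0,t)+b_{ij}(x,t)$ with $b_{ij}=O(|x|)$, computing $\partial_k\sigma=\sigma^{-1}\sum_m g_{km}(0,t)x_m$, and using $\sum_j a_{ij}(0,t)g_{jl}(0,t)=\delta_{il}$ to obtain $Z_j=x_j+O(|x|^2)$. Your organization is slightly more economical in that you isolate this key identity first and then deduce (ii) from (iii) via the commutator formula $[\partial_i,Z]u=\sum_j(\partial_i Z_j)\partial_j u$, whereas the paper carries out the expansion for (ii) and (iv) separately; but the underlying computation is identical.
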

\begin{proof}
We only prove (ii), (iii) and (iv) since  (i) is easily seen to be true.\\
\noindent\textit{Proof of (ii):} In view of the definition of $Z$, we see that 
\begin{align*}
	[\partial_i, Z]u=\partial_{i}Zu-Z\partial_{i}u
	=\sum_{k,l}\partial_{i}(\sigma a_{kl}\partial_k\sigma)\partial_lu+\sigma\sum_{k,l}a_{kl}\partial_k\sigma[\partial_{i},\partial_{j}]u.
\end{align*}
But as $[\partial_i,\partial_j]u=0$, the last term in the above expression vanishes. 

Observe now that
\begin{align}
	\label{derivative-sigma}
	\partial_{k}\sigma=\frac{\sum_{m} g_{km}(0, t)x_{m}}{\sigma}.
\end{align}
Now, using \ref{derivative-sigma} and $a_{kl}=a_{kl}(0,t)+b_{kl} (x,t)$, we observe that 
\begin{align}\label{hjj}
&\sum_{k, l} \partial_i ( \sigma a_{kl} \partial_k \sigma) \partial_l u= \sum_{k,l,m} \delta_{im} a_{kl}(0,t) g_{km}(0,t) \partial_l u + \sum_{k, m, l} \partial_i (b_{kl} g_{km}(0,t)x_m) \partial_l u.
\end{align}
Now using 
\[
\sum_{k} a_{kl} (0,t) g_{km}(0,t)=\delta_{lm}\ \text{(since $(g_{ij})$ is the inverse of $(a_{ij})$.)}
\]
it follows that 
\begin{equation}\label{hj1}
\sum_{k,l,m} \delta_{im} a_{kl}(0,t) g_{km}(0,t) \partial_l u= \partial_i u.
\end{equation}
Also since $b_{ij}(x,t) =O(|x|)$, it follows
\begin{equation}\label{hj2}
\left| \sum_{k, m, l} \partial_i (b_{kl} g_{km}(0,t)x_m) \partial_l u \right| \leq C|x| |\nabla u|.
\end{equation}
Using \eqref{hj1} and \eqref{hj2} in \eqref{hjj}, (ii) is seen to hold.

\medskip

\noindent\textit{Proof of (iii):} 
 Using standard summation convention we have $$Z_{j}=a_{ij}(x, t)g_{im}(0, t)x_{m}$$ for any $1\leq j\leq n$.  As before, by writing $a_{ij}(x,t)= a_{ij}(0,t) + b_{ij}(x,t)$, we have
 \begin{equation}\label{zj}
 Z_j = \delta_{jm} x_m + b_{ij} g_{im}(0,t) x_m= x_j + b_{ij} g_{im}(0,t) x_m.\end{equation}
  
Noting the fact that $b_{ij}=O(|x|)$, it is  thus straightforward to see from \eqref{zj} that $\partial_{i} Z_j=\delta_{ij}+O(|x|)$.\\
\medskip
\noindent\textit{Proof of (iv):} We have 
$$Z\sigma=\frac{a_{ij}g_{il}x_{l}g_{jm}x_{m}}{\sigma}.$$
Again  by writing $a_{ij}=a_{ij}(0, t)+b_{ij}$ and  using that  $b_{ij}=O(|x|)$, an easy calculation leads to the following

\begin{align*}
Z\sigma&=\frac{g_{jm}x_{j}x_{m}}{\sigma}+\frac{\sum_{i, j, l}b_{ij}g_{il}x_{l}g_{jm}x_{m}}{\sigma}\\
&=\sigma+O(|x|^2).
\end{align*}
 
\end{proof}

We also require  the following Rellich's identity (see for instance \cite{Ne, PWe}). Given a $C^{0,1}$ vector  field $G$, we have
\begin{align}\label{Rellich-bgg}
& \int_{\partial B_R} \langle A \nabla u, \nabla u\rangle \, \langle G,\nu \rangle  = 2 \int_{\partial B_R} a_{ij} \partial_i u \langle \partial_j,\nu \rangle Gu 
\\
& -  2\int_{B_R} a_{ij} (\operatorname{div} \partial_i) \partial_j u Gu  - 2 \int_{B_R} a_{ij} \partial_i u [\partial_j, G]u
\notag\\
& + \int_{B_R} \operatorname{div} G \langle A\nabla u, \nabla u \rangle +  \int_{B_R} \langle (GA)\nabla u, \nabla u \rangle  - 2 \int_{B_R} Gu\, \partial_i(a_{ij} \partial_j u),
\notag
\end{align}
where $ \nu $ denotes  the outer unit normal to $B_R$. The following application of  such a  Rellich's identity will be required in our setting.

\begin{lemma}
\label{Rellich-BGG}
Denoting $\sigma(\cdot, t)$ by $\sigma(\cdot)$, the following holds true for $u \in C^{\infty}_{0}(B_R \setminus \{0\})$
\begin{align}
\label{Rellich-application}
&2\int_{B_{R}} \sigma^{-n+2} (-\log \sigma) Zu\,\mathcal{L}u=-\int_{B_{R}}\sigma^{-n+2} \langle A \nabla u, \nabla u\rangle\\
&+2\int_{B_{R}} \sigma^{-n}((n-2)(-\log \sigma)+1) (Zu)^{2} +  O(1) \int_{B_R} \sigma^{-n+3} (-\log \sigma) |\nabla u|^2.\notag \\
\end{align}
\end{lemma}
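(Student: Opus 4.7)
My plan is to apply the Rellich identity \eqref{Rellich-bgg} with the specific vector field
$$G := \sigma^{-n+2}(-\log\sigma)\, Z,$$
where $Z$ is as in \eqref{vector-field}. With this choice, $Gu = \sigma^{-n+2}(-\log\sigma)Zu$, so the term $-2\int_{B_R} Gu\,\mathcal{L}u$ in \eqref{Rellich-bgg} reproduces the left-hand side of \eqref{Rellich-application} (up to sign). Since $u \in C_0^\infty(B_R\setminus\{0\})$, the two boundary integrals over $\partial B_R$ vanish. Moreover, the $\partial_i$ appearing in the Rellich identity are the Cartesian coordinate vector fields, for which $\operatorname{div}\partial_i = 0$; this eliminates the term $-2\int a_{ij}(\operatorname{div}\partial_i)\partial_j u\, Gu$. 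Thus only three interior contributions remain to be computed: $-2\int a_{ij}\partial_i u\,[\partial_j,G]u$, $\int \operatorname{div} G\,\langle A\nabla u,\nabla u\rangle$, and $\int \langle (GA)\nabla u,\nabla u\rangle$.

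The core of the computation uses Proposition \ref{useful-estimates} together with the identity $Zu = \sigma\langle A\nabla u,\nabla\sigma\rangle$ (immediate from the definition of $Z$). First, from (iii) and (iv) a direct expansion yields
$$\operatorname{div} G = \sigma^{-n+2}\bigl(2(-\log\sigma)-1\bigr) + O(\sigma^{-n+3}(-\log\sigma)).$$
Second, by the Leibniz rule $[\partial_j,G]u = \partial_j(\sigma^{-n+2}(-\log\sigma))Zu + \sigma^{-n+2}(-\log\sigma)[\partial_j,Z]u$. Using $\partial_j(\sigma^{-n+2}(-\log\sigma)) = \sigma^{-n+1}[(n-2)\log\sigma - 1]\partial_j\sigma$ and contracting with $a_{ij}\partial_i u$, the first piece produces $-2\int \sigma^{-n}[(n-2)\log\sigma - 1](Zu)^2$, which is exactly $2\int\sigma^{-n}((n-2)(-\log\sigma)+1)(Zu)^2$. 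The second piece, combined with (ii), gives $-2\int\sigma^{-n+2}(-\log\sigma)\langle A\nabla u,\nabla u\rangle + O(1)\int\sigma^{-n+3}(-\log\sigma)|\nabla u|^2$. Third, the Lipschitz bound in \eqref{assump} together with $|Z|=O(\sigma)$ immediately gives $|GA| = O(\sigma^{-n+3}(-\log\sigma))$, hence $\int\langle(GA)\nabla u,\nabla u\rangle = O(1)\int\sigma^{-n+3}(-\log\sigma)|\nabla u|^2$.

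Adding the three contributions reveals the key cancellation: the leading-order $(-\log\sigma)$ term $+2\int\sigma^{-n+2}(-\log\sigma)\langle A\nabla u,\nabla u\rangle$ arising from $\operatorname{div} G$ is exactly annihilated by the matching term with opposite sign arising from the commutator $[\partial_j,Z]u$. What survives is the logarithm-free quantity $-\int\sigma^{-n+2}\langle A\nabla u,\nabla u\rangle$ together with the desired $(Zu)^2$ term, which is precisely \eqref{Rellich-application}. Conceptually the delicate point is choosing the weight $\sigma^{-n+2}(-\log\sigma)$ so that this cancellation occurs; technically the main obstacle is the bookkeeping of the several lower-order remainders to verify that each of them, after using ellipticity to pass from $\langle A\nabla u,\nabla u\rangle$ to $|\nabla u|^2$, is controlled by the single error term $O(1)\int\sigma^{-n+3}(-\log\sigma)|\nabla u|^2$. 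No other simplification is needed, and the identity follows.
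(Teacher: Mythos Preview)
Your proposal is correct and follows essentially the same approach as the paper: apply the Rellich identity \eqref{Rellich-bgg} with $G=\sigma^{-n+2}(-\log\sigma)Z$, use Proposition~\ref{useful-estimates} to compute $\operatorname{div} G$, the commutator $[\partial_j,G]$, and the $(GA)$ term, and observe the cancellation of the leading $(-\log\sigma)\langle A\nabla u,\nabla u\rangle$ contributions coming from $\operatorname{div} G$ and from the $\partial_j u$ part of $[\partial_j,Z]u$. The paper's only cosmetic difference is that it writes $[\partial_j,Z]u = ([\partial_j,Z]u-\partial_j u)+\partial_j u$ explicitly before invoking (ii), whereas you invoke (ii) directly; the computations and the resulting error bookkeeping are otherwise identical.
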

\begin{proof}
With $G=\sigma^{-n+2}(-\log \sigma) Z$, by applying \eqref{Rellich-bgg} we find
\begin{align}\label{Rellich-intermediate}
&2\int_{B_{R}} \sigma^{-n+2} (-\log \sigma) Zu\,\mathcal{L}u=\int_{B_R} \operatorname{div}(\sigma^{-n+2} (-\log \sigma) Z) \langle A\nabla u, \nabla u \rangle\\
&- 2 \int_{B_R} a_{ij} \partial_i u [\partial_j, \sigma^{-n+2} (-\log \sigma) Z]u+  \int_{B_R} \sigma^{-n+2} (-\log \sigma) \langle ZA\nabla u, \nabla u \rangle\notag.
\end{align}
  From \eqref{Rellich-intermediate}  we obtain
  
  \begin{align}
\label{Rellich-application1}
&2\int_{B_{R}} \sigma^{-n+2} (-\log \sigma) Zu\,\mathcal{L}u=-\int_{B_{R}}\sigma^{-n+2}((n-2)(-\log \sigma)+1) \langle A \nabla u, \nabla u\rangle\\
&+ (n-2)\int_{B_{R}}\sigma^{-n+2}(-\log \sigma) \langle A \nabla u, \nabla u\rangle
-2\int_{B_{R}} \sigma^{-n+2} (-\log \sigma) a_{ij}\partial_{i }u \left([\partial_{j}, Z]u-\partial_{j}u\right)\notag\\
&+2\int_{B_{R}} \sigma^{-n}((n-2)(-\log \sigma)+1) (Zu)^{2} +  O(1) \int_{B_R} \sigma^{-n+3} (-\log \sigma) |\nabla u|^2,\notag \\
\end{align}

  once we incorporate the following:

\begin{align}\label{iden1}
&\operatorname{div}(\sigma^{-n+2} (-\log \sigma) Z)=-\sigma^{-n+2}(1+(n-2)(-\log\sigma))+\sigma^{-n+2}(-\log \sigma) n+ O(\sigma^{-n+3}) (-\log \sigma)\ \text{and} \\
& \int_{B_{R}} \sigma^{-n+2} (-\log\sigma)\langle (ZA)\nabla u, \nabla u \rangle= O(1) \int \sigma^{-n+3} (-\log \sigma) |\nabla u|^2,\ \notag
\end{align}    

which follows from  (i),  (iii) and (iv) of Proposition \ref{useful-estimates} and  also by using
\begin{align}\label{iden2}
 &[\partial_j, \sigma^{-n+2} (-\log \sigma) Z]u=\sigma^{-n+2}(-\log\sigma)\left([\partial_{j}, Z]u-\partial_{j}u\right)\\
 &+\sigma^{-n+2}(-\log\sigma)\partial_{j}u-\left((n-2)(-\log\sigma)+1\right)\sigma^{-n+1} \partial_{j}\sigma\,Zu.\notag
\end{align}
Finally by using (ii) of Proposition \ref{useful-estimates}, we find that \eqref{Rellich-application} follows from \eqref{iden2}.

\end{proof}

We now state and prove  the following parabolic version of Regbaoui's  Carleman estimate. 
\begin{thrm} \label{hardy-BGG}
	There exists  universal $C>0$ such that for every $\beta >0$ sufficiently large,  $R_0$ sufficiently small, $u \in C^{\infty}_{0}((B_R  \setminus \{0\}) \times (-T, T))$  for $R\leq R_0$, one has
	\begin{align}\label{har1-BGG}
	&\beta^3 \int_{B_R} \sigma^{-n} e^{\beta (\log \sigma)^2}u^2 +  \beta \int_{B_R}  \sigma^{-n+2} e^{\beta (\log \sigma)^2} \langle A \, \nabla u, \nabla u\rangle\\
	&\leq C  \, \int \sigma^{-n+4} e^{\beta (\log \sigma)^2} (\La  u-\Dt u)^2. \notag
	\end{align}
\end{thrm}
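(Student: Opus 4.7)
The plan is to adapt the Rellich-multiplier strategy behind Lemma \ref{Rellich-BGG} to accommodate the strictly convex weight $\phi(x,t) := \tfrac{\beta}{2}(\log \sigma(x,t))^{2}$ and, crucially, the time derivative in $\mathcal{L}-\partial_{t}$. First I would conjugate by setting $v = e^{\phi}u$, so that every $e^{\beta(\log\sigma)^{2}}$-weighted integral becomes an unweighted $L^{2}$ integral of $v$. A direct computation of $\mathcal{L}(e^{-\phi}v)$ yields
\begin{align*}
e^{\phi}(\mathcal{L}u - \partial_{t} u) = \mathcal{L}v - \partial_{t}v - 2\langle A\nabla\phi,\nabla v\rangle + \bigl(\langle A\nabla\phi,\nabla\phi\rangle - \mathcal{L}\phi + \partial_{t}\phi\bigr)v.
\end{align*}
Item (iv) of Proposition \ref{useful-estimates} gives $\langle A\nabla\sigma,\nabla\sigma\rangle = 1 + O(|x|)$, so the zeroth-order coefficient $\langle A\nabla\phi,\nabla\phi\rangle$ is of principal size $\beta^{2}(\log\sigma)^{2}/\sigma^{2}$; this is the term that, after the multiplier argument, will produce the $\beta^{3}$ lower bound.

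Next, I would test the conjugated equation against the weighted multiplier $2\sigma^{-n+2}(-\beta\log\sigma)Zv$, in direct analogy with the multiplier $2\sigma^{-n+2}(-\log\sigma)Zu$ employed in Lemma \ref{Rellich-BGG}, and integrate over $B_{R}\times(-T,T)$. Applying the Rellich identity \eqref{Rellich-bgg} with $G = \sigma^{-n+2}(-\beta\log\sigma)Z$ to the spatial part, the divergence term furnishes the positive contributions $\beta\int\sigma^{-n+2}\langle A\nabla v,\nabla v\rangle$ and $\beta\int\sigma^{-n}(Zv)^{2}$, while pairing the multiplier with the zeroth-order coefficient $\langle A\nabla\phi,\nabla\phi\rangle v$ yields, after one further integration by parts, the dominant $\beta^{3}\int\sigma^{-n}(\log\sigma)^{2}v^{2}$ lower bound. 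The first-order conjugation term $-2\langle A\nabla\phi,\nabla v\rangle$ contributes additional cross-terms of the form $\beta^{2}(\log\sigma)^{2}\sigma^{-n+1}v\,Zv$, which either reinforce the leading $\beta^{3}$ term after one more integration by parts or are absorbed by Cauchy--Schwarz against the positive $(Zv)^{2}$ term.

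The cross term $\int 2\sigma^{-n+2}(-\beta\log\sigma)(Zv)(\partial_{t}v)$ is handled by integration by parts in $t$. Since $v$ has compact support in $t$, only the $t$-derivative of the weight and the commutator $[\partial_{t},Z]$ produce nonzero contributions. The Lipschitz dependence of $g_{ij}(0,t)$ yields $\partial_{t}\sigma = O(\sigma)$ and $[\partial_{t},Z] = O(\sigma)\nabla$, hence these contributions are bounded by $C\beta\int\sigma^{-n+3}|\log\sigma|\,|\nabla v|^{2}$, which is strictly of lower order relative to the principal $\beta\int\sigma^{-n+2}|\nabla v|^{2}$ term once $R_{0}$ is chosen small enough; the $\partial_{t}\phi\cdot v$ term arising at Step~1 is treated similarly. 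The variable coefficient error from $A(x,t)-A(0,t)$ appears, exactly as in the derivation of Lemma \ref{Rellich-BGG}, as a remainder of size $O(1)\int\sigma^{-n+3}(\log\sigma)^{2}|\nabla v|^{2}$, absorbed for $R_{0}$ sufficiently small. Undoing the substitution $u = e^{-\phi}v$ restores the weight $e^{\beta(\log\sigma)^{2}}$ on both sides and yields \eqref{har1-BGG}, using $(\log\sigma)^{2}\ge 1$ for $\sigma$ small.

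The principal obstacle is the treatment of the time derivative: unlike the elliptic case in \cite{Reg}, where polar/spherical-harmonic decomposition of the frozen operator $\operatorname{div}(A(0,t_{0})\nabla)$ diagonalizes the problem and a H\"ormander-type commutator estimate directly produces the positivity, here the presence of $\partial_{t}$ precludes any such diagonalization, forcing a purely Rellich-identity/integration-by-parts approach at every step, and the errors coming from the $t$-dependence of $\sigma$ through $g_{ij}(0,t)$ must each be shown to be strictly lower order. This is precisely the subtle new feature highlighted in the introduction as one of the key novelties of the work.
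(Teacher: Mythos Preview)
Your proposal has two genuine gaps that prevent it from closing.

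\textbf{The gradient term from Rellich has the wrong sign.} Applying the Rellich identity of Lemma~\ref{Rellich-BGG} with the multiplier $\sigma^{-n+2}(-\beta\log\sigma)Z$ to $\mathcal{L}v$ produces $-\beta\int\sigma^{-n+2}\langle A\nabla v,\nabla v\rangle$, not $+\beta\int\sigma^{-n+2}\langle A\nabla v,\nabla v\rangle$ as you claim; this is exactly what \eqref{Rellich-application} says. Indeed, the paper's intermediate estimate \eqref{semi-final-BGG} carries the gradient term with coefficient $-\tfrac{5}{2}\beta$. The mechanism by which the paper recovers the gradient of $u$ with the correct sign is a separate \emph{interpolation lemma} (see \eqref{grad1-BGG}): one first converts $\int\sigma^{-n+2}\langle A\nabla v,\nabla v\rangle$ back to $\int\sigma^{-n+2}e^{\beta(\log\sigma)^{2}}\langle A\nabla u,\nabla u\rangle$ via \eqref{dom1-BGG}, obtaining \eqref{almost-final-estimate} with the gradient on the \emph{left}, and then bounds this gradient integral directly by $C\int\sigma^{-n+4}e^{\beta(\log\sigma)^{2}}(\mathcal{L}u-\partial_{t}u)^{2}$ via an integration-by-parts argument that reuses \eqref{almost-final-estimate} itself. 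This entire step is absent from your outline, and without it the estimate cannot be closed.

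\textbf{The $\partial_{t}v$ cross terms are not lower order.} Your plan to dispose of $\int 2\sigma^{-n+2}(-\beta\log\sigma)(Zv)(\partial_{t}v)$ by integration by parts in $t$ does not work: the residual terms are of the same order as the principal positive terms, not $O(\sigma)$ smaller. The paper does \emph{not} test the conjugated equation against a multiplier; instead it squares, writing $e^{\phi}(\mathcal{L}u-\partial_{t}u)=A+B$ with $A=2\sigma^{-2}(-\beta\log\sigma)Zv-\partial_{t}v$ and using $(A+B)^{2}\ge A^{2}+2AB$. The dangerous cross term $-2(n-4)\int\sigma^{-n+2}Zv\,\partial_{t}v$ (arising in $\mathcal{I}_{6}$) is then handled by two completion-of-squares against $\tfrac{1}{2}\int\sigma^{-n+4}A^{2}$ each; see the estimates of $\mathcal{A}$ and $\mathcal{B}$ in the proof of Theorem~\ref{hardy-BGG-semi-final}. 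In particular, the negative contribution $-4\beta(n-2)\int\sigma^{-n}(-\log\sigma)(Zv)^{2}$ coming out of the square in \eqref{estimate-A} is \emph{exactly} cancelled by the matching positive term that the Rellich computation \eqref{i2} supplies, and the $(\partial_{t}v)^{2}$ errors from $\mathcal{I}_{8}$--$\mathcal{I}_{11}$ are absorbed by \eqref{estimate-B}. None of this is available in a pure multiplier scheme, where Cauchy--Schwarz on $\int e^{\phi}(\mathcal{L}u-\partial_{t}u)\cdot\sigma^{-n+2}(-\beta\log\sigma)Zv$ would cost a $\beta^{2}\int\sigma^{-n}(\log\sigma)^{2}(Zv)^{2}$ term that swamps the $\beta\int\sigma^{-n}(Zv)^{2}$ gain from Rellich.
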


Theorem \ref{hardy-BGG} is quintessential in proving that non-trivial solutions to \eqref{main-diff-ineq} decay less than exponentially. Proof of Theorem~\ref{hardy-BGG} divided into some intermediate results. The first such result is as follows.\begin{thrm} \label{hardy-BGG-semi-final}
	There exists  universal $C>0$ such that for every $\beta >0$ sufficiently large,  $R_0$ sufficiently small, $u \in C^{\infty}_{0}((B_R  \setminus \{0\}) \times (-T, T))$  for $R\leq R_0$, one has	\begin{align}
\label{semi-final-BGG} \int \sigma^{-n+4} \, e^{\beta(\log \sigma)^2}(\La
	 u-\Dt u)^2&\geq \frac{59}{10} \beta^3 \int \sigma^{-n} (\log\sigma)^2 v^2+\frac{15}{4} \beta\int \sigma^{-n} (Zv)^{2}\\
	&\notag-\frac{5}{2} \beta \int \sigma^{-n+2} \langle A \nabla v, \nabla v\rangle,
	\end{align}
	where  $v= e^{\frac{\beta}{2} (\log \sigma)^{2}}   \, u$.\end{thrm}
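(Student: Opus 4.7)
The plan is to conjugate $\mathcal{L}-\partial_t$ with the weight $e^{\phi}$ where $\phi=\tfrac{\beta}{2}(\log\sigma)^{2}$, decompose the conjugated operator into a symmetric Schr\"odinger-type piece $P$, the antisymmetric time part $Q$, and a first-order radial piece $R$, and then expand $(L_{\beta}v)^{2}$. Discarding $P^{2}+Q^{2}\geq 0$ and evaluating the key mixed term $2PR$ via the Rellich identity of Lemma~\ref{Rellich-BGG} will produce the three terms on the right-hand side of \eqref{semi-final-BGG}; the time-cross contributions $2PQ$ and $2QR$ are absorbed by integration by parts in $t$, exploiting $\partial_t\sigma=O(|x|)$.

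With $v=e^{\phi}u$, a direct computation yields
\[
L_{\beta}v \;:=\; e^{\phi}(\mathcal{L}-\partial_t)(e^{-\phi}v) \;=\; P + Q + R,
\]
where $P := \mathcal{L}v + Vv$, $Q := -\partial_t v$, $R := -\tfrac{2\beta\log\sigma}{\sigma^{2}}\,Zv$, and $V := \langle A\nabla\phi,\nabla\phi\rangle - \mathcal{L}\phi + \phi_t$. I used $\nabla\phi=\beta(\log\sigma)\nabla\sigma/\sigma$ together with $\langle A\nabla\sigma,\nabla v\rangle=Zv/\sigma$ (a consequence of \eqref{vector-field}) to identify $R$. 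Proposition~\ref{useful-estimates}(iii)--(iv) gives $\langle A\nabla\sigma,\nabla\sigma\rangle=1+O(|x|)$ and $\operatorname{div}(A\nabla\sigma)=(n-1)/\sigma+O(1)$; combined with the Lipschitz $t$-dependence of $A(0,t)$ (so $\partial_t\sigma=O(|x|)$ and $\phi_t = O(\beta|\log\sigma|)$), these yield the expansion $V = \beta^{2}(\log\sigma)^{2}/\sigma^{2} + O(\beta|\log\sigma|/\sigma^{2})$.

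The decisive cross term is $2\int\sigma^{-n+4}PR = -4\beta\int\sigma^{-n+2}(\log\sigma)(\mathcal{L}v + Vv)\,Zv$, which splits into two pieces. The $\mathcal{L}v\cdot Zv$ part equals $2\beta\cdot 2\int\sigma^{-n+2}(-\log\sigma)Zv\,\mathcal{L}v$, and Lemma~\ref{Rellich-BGG} applied pointwise in $t$ produces
\[
-2\beta\int\sigma^{-n+2}\langle A\nabla v,\nabla v\rangle \;+\; 4\beta\int\sigma^{-n}\bigl((n-2)(-\log\sigma)+1\bigr)(Zv)^{2},
\]
up to a subleading error $O(\beta)\int\sigma^{-n+3}(-\log\sigma)|\nabla v|^{2}$ that is absorbed for $R_0$ small. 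The $Vv\cdot Zv$ part is treated by writing $vZv=\tfrac{1}{2}\sigma\langle A\nabla\sigma,\nabla(v^{2})\rangle$ and integrating by parts in $x$; the computation $\operatorname{div}(\sigma^{-(n-1)}(\log\sigma)^{3}A\nabla\sigma) = 3\sigma^{-n}(\log\sigma)^{2}+\text{lower order}$ (which follows from Proposition~\ref{useful-estimates} and $\mathcal{L}\sigma=(n-1)/\sigma+O(1)$) produces the positive contribution $6\beta^{3}\int\sigma^{-n}(\log\sigma)^{2}v^{2}$. Together with the kept square $\int\sigma^{-n+4}R^{2} = 4\beta^{2}\int\sigma^{-n}(\log\sigma)^{2}(Zv)^{2}$, this yields the three building blocks of \eqref{semi-final-BGG}; the precise constants $\tfrac{59}{10},\tfrac{15}{4},-\tfrac{5}{2}$ emerge as the slack remaining after Lemma~\ref{alineq} absorbs the sign-indefinite errors coming from the subleading terms in $V$ and in $\mathcal{L}\sigma$.

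The remaining cross terms $2\int\sigma^{-n+4}PQ$ and $2\int\sigma^{-n+4}QR$ each contain a factor of $\partial_t v$, and are treated by integration by parts in $t$ following an auxiliary integration by parts in $x$. For example, $-2\int\sigma^{-n+4}\mathcal{L}v\,\partial_t v = \int\partial_t(\sigma^{-n+4})\langle A\nabla v,\nabla v\rangle + (\text{lower})$, which is $O(1)\int\sigma^{-n+3}\langle A\nabla v,\nabla v\rangle$ by $\partial_t\sigma=O(|x|)$; analogously the $QR$ cross term produces $O(\beta)\int\sigma^{-n+3}|\log\sigma|\langle A\nabla v,\nabla v\rangle$. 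The main obstacle I anticipate is precisely the handling of these time-derivative cross terms: unlike Regbaoui's elliptic argument in \cite{Reg}, one must carry out the $t$-integration by parts in a manner compatible with the Rellich identity, following the spirit of \cite{BMa} and exploiting $\partial_t\sigma=O(|x|)$ to ensure that all time errors remain subordinate to the main $\beta^{3}$ term, thereby preserving the constants $\tfrac{59}{10},\tfrac{15}{4},\tfrac{5}{2}$ after absorbing all perturbations for $\beta$ large and $R_0$ small.
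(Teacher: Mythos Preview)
Your overall architecture---conjugation, a $P+Q+R$ splitting, the Rellich identity for the $2PR$ piece, and the divergence computation that produces the leading $6\beta^{3}\int\sigma^{-n}(\log\sigma)^{2}v^{2}$---is correct and matches the paper. The gap is in the last paragraph: you discard $Q^{2}=(\partial_t v)^{2}$ and then assert that the cross terms $2PQ$ and $2QR$ can be reduced, by integration by parts in $t$, to errors of the form $O(\beta)\int\sigma^{-n+3}|\log\sigma|\langle A\nabla v,\nabla v\rangle$. This is not true. Your own computation of $-2\int\sigma^{-n+4}\mathcal{L}v\,\partial_t v$ is incomplete: after integrating by parts in $x$ and then in $t$ one is left with the genuinely sign-indefinite residue
\[
-2(n-4)\int\sigma^{-n+2}\,Zv\cdot\partial_t v,
\]
and the $2QR$ term is itself $4\beta\int\sigma^{-n+2}(\log\sigma)\,Zv\cdot\partial_t v$. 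These mixed ``radial--time'' integrals do \emph{not} reduce to total derivatives: if you integrate by parts in $x$ to move $Z$ onto $\partial_t v$, then in $t$ to move $\partial_t$ back, you return to the original expression. Any attempt to bound them by Young's inequality produces a negative $(\partial_t v)^{2}$ contribution that you cannot absorb, precisely because you threw $Q^{2}$ away.

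The paper's remedy is to use the weaker algebraic inequality $(A+B)^{2}\ge A^{2}+2AB$ with $A=Q+R$ and $B=P$, i.e.\ to discard only $P^{2}$ and keep the full square $\int\sigma^{-n+4}(Q+R)^{2}$. That square is then split in half: one half is combined with $-2(n-2)\int\sigma^{-n+2}Zv\,\partial_t v$ and completed to a perfect square, producing only controllable $(Zv)^{2}$ losses; the other half is combined with $4\int\sigma^{-n+2}Zv\,\partial_t v$ via a more delicate square-completion that yields a \emph{positive} $\tfrac{1}{\beta}\int\sigma^{-n+5}(\partial_t v)^{2}$, which in turn absorbs the stray $(\partial_t v)^{2}$ errors arising from the $b_{ij}$ and $\partial_t\sigma$ corrections. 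Retain $Q^{2}$ and carry out these two square-completions; without them the parabolic cross terms cannot be closed.
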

\begin{proof}
	It follows 	\begin{eqnarray*}
		\La u=\La v (e^{-\frac{\beta}{2} (\log \sigma)^{2}} )+2 a_{ij} \, \partial_jv \partial_i(e^{-\frac{\beta}{2} (\log \sigma)^{2}} )+\La (e^{-\frac{\beta}{2} (\log \sigma)^{2}} ) v
	\end{eqnarray*}
	
	As before, letting $a_{ij}(x,t)=a_{ij}(0, t)+b_{ij}(x,t)$, we first compute  the term $ \sum_{i,j} a_{ij}(0, t) (e^{-\frac{\beta}{2} (\log \sigma)^{2}} )_{ij}$ explicitly.  To do so, one important ingredient is the following
$$(e^{-\frac{\beta}{2} (\log \sigma)^{2}} )_{i}=e^{-\frac{\beta}{2} (\log \sigma)^{2}} (-\beta \log\sigma)\frac{g_{il}x_{l}}{\sigma^{2}}.$$
Upon differentiating once more, we obtain
\begin{align*}
a_{ij}(0, t) (e^{-\frac{\beta}{2} (\log \sigma)^{2}} )_{ij}&=\frac{e^{-\frac{\beta}{2} (\log \sigma)^{2}} (\beta \log\sigma)^2}{\sigma^4} a_{ij}(0, t) g_{jm}x_{m} g_{il}x_{l}\\
&+\frac{(-\beta)e^{-\frac{\beta}{2} (\log \sigma)^{2}}}{\sigma^4}a_{ij}(0, t) g_{jm}x_{m} g_{il}x_{l}\\
&+\frac{(-\beta \log\sigma)e^{-\frac{\beta}{2} (\log \sigma)^{2}}}{\sigma^2}a_{ij}(0, t) g_{il}\delta_{jl}\\
&-2\frac{(-\beta \log\sigma)e^{-\frac{\beta}{2} (\log \sigma)^{2}}}{\sigma^4}a_{ij}(0, t) g_{jm}x_{m} g_{il}x_{l}.
\end{align*}	
Now using the fact that $a_{ij}(0, t) g_{jm}x_{m} g_{il}x_{l}=g_{lm} x_{m} x_{l}=\sigma^2$, we obtain
\begin{align}
a_{ij}(0, t) (e^{-\frac{\beta}{2} (\log \sigma)^{2}} )_{ij}=\ee \sigma^{-2}\left((\beta \log\sigma)^2-\beta-(n-2)\beta\log\sigma\right).
\end{align}

We thus  obtain
\begin{align}
\La u= &\La v (e^{-\frac{\beta}{2} (\log \sigma)^{2}} ) +2 \sigma^{-2}(-\beta \log\sigma)\ee\, Zv\\
&\notag +\ee \sigma^{-2}\left((\beta \log\sigma)^2-\beta-(n-2)\beta\log\sigma\right)v\\
&+\left[\partial_{i}b_{ij}\ee (-\beta \log\sigma)\sigma^{-1}\partial_{j}\sigma \right]v\notag+\left[b_{ij}\left(-\beta (\log \sigma) \ee\sigma^{-1} \, \right)\partial_i \partial_j \sigma\right]	v \nonumber\\
&+\left[b_{ij}\left(\beta \sigma^{-2} \ee  \, \left(-1+(\log \sigma)+\beta (\log \sigma)^2\right)\right)\partial_i \sigma \partial_j \sigma\right] v\notag.
\end{align}
Similarly we also have
\begin{align*}
\partial_{t}u=\ee\left[\Dt v-v \beta \log\sigma\frac{\Dt \sigma}{\sigma}\right].
\end{align*}
Now by writing 
\begin{align}
&(\La u-\Dt u)e^{\frac{\beta}{2}(\log\sigma)^{2}}=A+B,\,\text{where,}\\
&A:=2 \sigma^{-2}(-\beta \log\sigma)\, Zv-\Dt v,\\
&\text{and,}\,\, B:=\La v +\sigma^{-2}\left((\beta \log\sigma)^2-\beta-(n-2)\beta\log\sigma\right)v\\
&+\left[\partial_{i}b_{ij} (-\beta \log\sigma)\sigma^{-1}\partial_{j}\sigma \right]v\notag+\left[b_{ij}\left(-\beta (\log \sigma) \sigma^{-1} \, \right)\partial_i \partial_j \sigma\right]	v \nonumber\\
&+\left[b_{ij}\left(\beta \sigma^{-2}   \, \left(-1+(\log \sigma)+\beta (\log \sigma)^2\right)\right)\partial_i \sigma \partial_j \sigma\right] v +v \beta \log\sigma \frac{\partial_t \sigma}{\sigma}\notag
\end{align}
and then by using the algebraic inequality $(A+B)^2 \geq A^2 + 2AB$ we obtain

	\begin{align}\label{c2-BGG}
	&\int \sigma^{-n+4} \, e^{\beta(\log \sigma)^2}(\La
	 u-\Dt u)^2\geq \int \sigma^{-n+4} (A^2+2AB)\\
	& = \int \sigma^{-n+4} [2 \sigma^{-2}(-\beta \log\sigma)\, Zv-\Dt v]^{2}+\int 4\beta \sigma^{-n+2} (-\log \sigma) \, Zv \La v  \notag\\
	&
	+ \int 4 \beta \,  \sigma^{-n+2} \, (-\log \sigma) \bigg[\left[\sigma^{-2}\left((\beta \log \sigma)^2-\beta-(n-2) \beta \log \sigma \right)\right] \notag\\
	&+\left[b_{ij}\left(\beta (-\log \sigma) \sigma^{-1} \, \right)\partial_i \partial_j \sigma\right] \notag\\
	&+\left[b_{ij}\left(\beta \sigma^{-2}  \, \left(-1+(\log \sigma)+\beta (\log \sigma)^2\right)\right)\partial_i \sigma \partial_j \sigma\right]\bigg] Zv \cdot	v  \notag\\
	&+4\beta\int \sigma^{-n+1}\left[\partial_i b_{ij} \cdot \beta (-\log \sigma)\,  \partial_j\sigma\right]Zv\cdot v- 4\beta^{2} \int\sigma^{-n+4}(\log\sigma)^2 \sigma^{-2} Zv. v \frac{\partial_t \sigma}{\sigma}\notag\\
	&-2\int \sigma^{-n+4} \La v\, \Dt v-2\int \sigma^{-n+2}\left((\beta \log\sigma)^2-\beta-(n-2)\beta\log\sigma\right)v \,\Dt v\notag\\
	&-2\int \sigma^{-n+3}\left[\partial_{i}b_{ij} (-\beta \log\sigma)\partial_{j}\sigma \right]v\, \Dt v-2\beta\int \sigma^{-n+3} b_{ij} (-\log\sigma) (\partial_{i} \partial_{j}\sigma)\, v\, \Dt v\notag\\
	&-2\beta \int \sigma^{-n+2} b_{ij}[\left(-1+(\log \sigma)+\beta (\log \sigma)^2\right)]\partial_{i} \sigma\, \partial_{j}\sigma\, v\, \Dt v\notag\\
	&-2\beta \int \sigma^{-n+3} (\log\sigma)v \Dt v \Dt \sigma\notag=:\sum_{j=1}^{11}\mathcal{I}_{j}.
	\end{align}
We now examine each $\mathcal{I}_{j}$ separately.
\medskip
\paragraph{\underline{Estimate for $\mathcal{I}_2$}:}  Using Lemma~\ref{Rellich-BGG} we have

\begin{align}
\label{i2} &4\beta\int_{B_{R}} \sigma^{-n+2} (-\log \sigma) Zv\,\mathcal{L}v\\
\notag &\notag\geq 4 \beta\int \sigma^{-n}((n-2)(-\log \sigma)+1) (Zv)^{2}-2\beta\int \sigma^{-n+2} \langle A \nabla v, \nabla v\rangle\\
&-4c\beta \int \sigma^{-n+3}(-\log \sigma) \langle A \nabla v, \nabla v\rangle\notag\\
& \notag\geq 4 \beta\int \sigma^{-n}((n-2)(-\log \sigma)+1) (Zv)^{2}-\frac{9}{4} \beta \int \sigma^{-n+2} \langle A \nabla v, \nabla v\rangle,\notag
\end{align}
where the penultimate inequality is a consequence of $c\,\sigma^{-n+3}(-\log\sigma)\leq \frac{1}{16}\sigma^{-n+2}$ which holds for $\sigma$ small enough which in turn can be ensured by taking $R_0$ small enough. This completes the estimate for $\mathcal{I}_2$.
\medskip
\paragraph{\underline{Estimate for $\mathcal{I}_3$}:} Write $\mathcal{I}_3=\sum_{k=1}^{3}\mathcal{I}_{3}^{k}.$ Let us begin by estimating the term \begin{align}
\notag\mathcal{I}_{3}^{1}&:=\int 4 \beta \,  \sigma^{-n} \, (-\log \sigma) \left[\left((\beta \log \sigma)^2-\beta-(n-2) \beta \log \sigma \right)\right] (Zv)\, v\\
\notag&=\int 4 \beta \, [(\beta \log \sigma)^2-\beta-(n-2) \beta \log \sigma] (-\log \sigma) \sigma^{-n} Z \bigg(\frac{v^2}{2}\bigg)\\
&=  4 \int \beta^3 \operatorname{div}[(\log \sigma)^3\sigma^{-n} Z] \frac{v^2}{2}-4 \int \beta^2 \operatorname{div}((\log \sigma)\sigma^{-n} Z) \frac{v^2}{2}\label{i3-1}\\
&-4 \int \beta^2(n-2) \operatorname{div}((\log \sigma)^2\sigma^{-n} Z) \frac{v^2}{2},\notag
\end{align}
where we have used integration by parts. Observe
\begin{align*}
4 \int \beta^3 \operatorname{div}[(\log \sigma)^3\sigma^{-n} Z ] \frac{v^2}{2}&=2\beta^{3}\int \left(3 (\log\sigma)^2-n(\log\sigma)^3\right)    \sigma^{-n-1}Z\sigma\, v^2\\
&+2\beta^3 \int (\log \sigma)^3 \sigma^{-n} \operatorname{div}(Z)\, v^2.
\end{align*}
Since $|\operatorname{div}Z-n|\leq c\sigma$, we obtain the following estimate
\begin{align}
\label{i3-11}
4 \int \beta^3 \operatorname{div}[(\log \sigma)^3\sigma^{-n} Z ] \frac{v^2}{2}\geq 6\beta^3 \int \sigma^{-n}(\log\sigma)^{2} v^2-c \beta^{3}\int \sigma^{-n+1}(\log \sigma)^3 v^2.  
\end{align}
Arguing similarly we obtain 
\begin{align}
\label{i3-12}
	 &-4 \int \beta^2 \operatorname{div}((\log \sigma)\sigma^{-n} Z) \frac{v^2}{2} -4 \int \beta^2(n-2) \operatorname{div}((\log \sigma)^2\sigma^{-n} Z) \frac{v^2}{2} \\
	& \notag\geq2\beta^2	 \int  \sigma^{-n} [2(n-2)(-\log \sigma)-1]  \, v^2- C \int  \sigma^{-n+1}  [\beta^2(-\log \sigma) + 4 \beta^2(n-2)(\log \sigma)^2 ] v^2.
\end{align}	
\begin{align}\label{i32}
	\mathcal{I}_{3}^{2}&:= 4 \beta \int  \sigma^{-n+1} \left[ b_{ij} (-\log \sigma)  \partial_i \partial_j \sigma \right] Z\bigg(\frac{v^2}{2} \bigg)=4 \beta\int \operatorname{div}\bigg(\sigma^{-n+1}\left[b_{ij}(\log \sigma)\,\partial_i \partial_j \sigma Z\right] \bigg)\frac{v^2}{2}\\
	\notag&=4\beta\int \bigg[(n-1)\sigma^{-n}\,Z\sigma\, (b_{ij} \partial_i \partial_j \sigma)(-\log\sigma)+\sigma^{-n}Z\sigma\, (b_{ij} \partial_i \partial_j \sigma)\\
	\notag &+\sigma^{-n+1}(\log\sigma) Z(b_{ij} \partial_i \partial_j \sigma)+\sigma^{-n+1}(b_{ij}(\log \sigma)\,\partial_i \partial_j \sigma)(\operatorname{div}Z)\bigg]\\
	&\notag \geq - C \beta \int \sigma^{-n+1} (-\log \sigma) v^2.
			\end{align}
 Similarly
\begin{align}
	\notag &\mathcal{I}_{3}^{3}:= 4 \beta^2 \int \sigma^{-n} (-\log \sigma) (-1+(\log \sigma)+\beta (\log \sigma)^2)b_{ij} \partial_i \sigma \partial_j \sigma  Z \bigg(\frac{v^2}{2} \bigg)\\ & = - 2 \beta^2 \int \operatorname{div} ( \sigma^{-n} (-\log \sigma) (-1+(\log \sigma)+\beta (\log \sigma)^2)b_{ij} \partial_i \sigma \partial_j \sigma  Z) v^2\notag 	\\
	& \geq - C\beta^2 \int \sigma^{-n+2} (-\log \sigma) (-1+(\log \sigma)+\beta (\log \sigma)^2) v^2- C\beta^2 \int \sigma^{-n+1} v^2\label{i33} \\
&- C\beta^3 \int \sigma^{-n+1} (\log \sigma)^2 v^2. \notag
		\end{align}
Therefore, combining estimates \eqref{i3-1}, \eqref{i3-11}, \eqref{i3-12}, \eqref{i32}, and \eqref{i33}, we deduce the following inequality for $R_0$ small enough and $\beta$ sufficiently large
\begin{align}\label{i3}
\mathcal{I}_{3}\geq \frac{599}{100} \, \beta^3 \int \sigma^{-n}( \log \sigma)^2 v^2.    
\end{align}
\medskip
\paragraph{\underline{Estimate for $\mathcal{I}_4$}:} An application of the Young's inequality $ab\leq \frac{\epsilon}{2} a^2+\frac{1}{2\epsilon}b^2,$ together with the fact that $\sum_{i, j} |\partial_{i}b_{ij} \partial_{j}\sigma|\leq c$ gives \begin{align}\label{i4}
		&\int 4 \beta \,  \sigma^{-n+1} \left[\partial_i b_{ij} \cdot \beta (-\log \sigma) \partial_j \sigma\right] Zv\cdot v 
		\\
		& \geq -  c\beta \int \sigma^{-n+1} ( Zv)^2 (\log(\sigma))^2  -\beta^{3}  \int \sigma^{-n+1}  v^2.\notag
	\end{align}

\medskip
\paragraph{\underline{Estimate for $\mathcal{I}_5$}:} A similar application of the Young's inequality implies for all small enough $R_0$ \begin{align}
\label{i5}\mathcal{I}_5&:=-\int 4\beta^{2}\sigma^{-n+2}(\log\sigma)^2 Zv. v\\
&\geq -2\beta \int \sigma^{-n+2} (\log\sigma)^2 (Zv)^2-2\beta^3\int \sigma^{-n+2} (\log(\sigma))^2 v^2.\notag
\end{align}
\medskip
Thus from  \eqref{i2}, \eqref{i3}, \eqref{i4} and \eqref{i5}, we obtain
\begin{align}
\label{final-estimate-Zv} & \sum_{i=2}^5 \mathcal I_i\\
&\geq 4 \beta\int \sigma^{-n}((n-2)(-\log \sigma)) (Zv)^{2}+\frac{31}{8} \beta\int \sigma^{-n} (Zv)^{2}\notag\\
&\notag+\frac{598}{100} \beta^3 \int \sigma^{-n} (\log\sigma)^2 v^2-\frac{9}{4} \beta \int \sigma^{-n+2} \langle A \nabla v, \nabla v\rangle,
\end{align}
provided we choose $\beta$ sufficiently large and $R_0$  small enough. Over here we would like to highlight the fact  that the term $4 \beta\int \sigma^{-n}((n-2)(-\log \sigma)) (Zv)^{2}$ in \eqref{final-estimate-Zv} above  will be very helpful for us in dealing with some subsequent negative terms.
\medskip\\
Now we start estimating terms involving $v_t$. 
\medskip
\paragraph{\underline{Estimate for $\mathcal{I}_6$}:}
Recall that 
\begin{align}
\notag\mathcal{I}_6&:=-2\int \sigma^{-n+4} \La v\, \Dt v\\
\notag&=2\int A\nabla v\cdot \nabla(\Dt v\cdot \sigma^{-n+4})\\
\notag &=2\int  \sigma^{-n+4} A\nabla v\cdot \nabla {\Dt v}-2(n-4)\int \langle A\nabla v, \nabla  \sigma\rangle \sigma^{-n+3} \Dt v\\
\notag &=-\int \langle(\Dt A) \nabla v, \nabla v\rangle \sigma^{-n+4}+2(n-4)\int \langle A\nabla v, \nabla v\rangle \sigma^{-n+3} \Dt \sigma\\ \notag &-2(n-4)\int \sigma^{-n+2} Z(v)\cdot\Dt v\\
\label{i6}&\geq- C\int |\nabla v|^2 \sigma^{-n+4}+2C(n-4)\int |\nabla v|^2 \sigma^{-n+4} \\ \notag &-2(n-4)\int \sigma^{-n+2} Z(v)\cdot\Dt v, 
\end{align}
where we used that $|\Dt \sigma| \leq C \sigma$.
\medskip
\paragraph{\underline{Estimate for $\mathcal{I}_7$}:} Again considering fact that $\Dt\sigma\simeq  O(\sigma)$, using integration by parts we obtain 
\begin{align}
\notag\mathcal{I}_7&:=-2\int \sigma^{-n+2}\left((\beta \log\sigma)^2-\beta-(n-2)\beta\log\sigma\right)v \,\Dt v\notag\\
\notag&=\int \Dt\left(\sigma^{-n+2}\left((\beta \log\sigma)^2-\beta-(n-2)\beta\log\sigma\right)\right) v^2\\
\notag &=\int \bigg[(-n+2) \sigma^{-n+1} \Dt \sigma\left((\beta \log\sigma)^2-\beta-(n-2)\beta\log\sigma\right)\\
\notag &+\sigma^{-n+2}\left(2\beta^2 \log\sigma \frac{1}{\sigma} \Dt \sigma-\beta(n-2)\frac{1}{\sigma} \Dt \sigma\right)\bigg]\, v^2\\
&\geq - C\beta^2 \int \sigma^{-n+2} ( \log\sigma)^2v^2,\label{i7}
\end{align}
provided $\beta$ is sufficiently large and $R_0$ is small enough. 
\medskip
\paragraph{\underline{Estimate for $\mathcal{I}_8$}:} Using Young's inequality, we obtain that $\mathcal I_8$ can be estimated as
\begin{align}
\label{i8}
\mathcal{I}_8&:=-2\int \sigma^{-n+3}\left[\partial_{i}b_{ij} (-\beta \log\sigma)\partial_{j}\sigma \right]v\, \Dt v\\
\notag&\geq -c\beta^{3} \int \sigma^{-n+1} (\log\sigma)^2 v^2-\frac{1}{4\beta} \int \sigma^{-n+5} (\Dt v)^2,
\end{align}
where we have used the fact $\sum_{i, j} |\partial_{i}b_{ij} \partial_{j}\sigma|= O(1)$. 

\medskip

\paragraph{\underline{Estimate for $\mathcal{I}_9$}:} Using  $|b_{ij} (\partial_{i}\partial_{j}\sigma)|= O(1)$ and Young's inequality we have
\begin{align}
\label{i9}\mathcal{I}_9&:=-2\beta\int \sigma^{-n+3} b_{ij} (-\log\sigma) (\partial_{i} \partial_{j}\sigma)\, v\, \Dt v\\
\notag&\geq -c_1\beta^{3} \int \sigma^{-n+1} (\log\sigma)^2 v^2-\frac{1}{4\beta} \int \sigma^{-n+5} (\Dt v)^2.
\end{align}

\medskip
\paragraph{\underline{Estimate for $\mathcal{I}_{10}$}:}
\begin{align}
\mathcal{I}_{10}&:=-2\beta \int \sigma^{-n+2} b_{ij}[\left(-1+(\log \sigma)+\beta (\log \sigma)^2\right)]\partial_{i} \sigma\, \partial_{j}\sigma\, v\, \Dt v
\end{align}
We first handle the first two terms in the expression for $\mathcal I_{10}$.  It is easy to observe using $|b_{ij}| =O(\sigma)$ and Young's inequality  that 
\begin{align}
\label{i10-1}
&-2\beta \int \sigma^{-n+2} b_{ij}[\left(-1+(\log \sigma)\right)]\partial_{i} \sigma\, \partial_{j}\sigma\, v\, \Dt v\\
&\geq  -c_2\beta^{3} \int \sigma^{-n+1} (-\log\sigma)^{2} v^2-\frac{1}{4\beta} \int \sigma^{-n+5} (\Dt v)^2. \notag
\end{align}

Now using integration by parts and the fact that $|\Dt( \partial_{i}\sigma\partial_{j}\sigma)|\leq c$,
\begin{align}
\label{i10-2}
&-2\beta^{2} \int\sigma^{-n+2} b_{ij} (\log \sigma)^2\partial_{i} \sigma\, \partial_{j}\sigma\, v\, \Dt v\\
&\notag =\beta^{2} \int\Dt\{\sigma^{-n+2} b_{ij} (\log \sigma)^2\partial_{i} \sigma\, \partial_{j}\sigma\}\, v^2 \\
&\notag\geq -c_3\beta^{2}\int \sigma^{-n+2}(\log\sigma)^2 v^2.
\end{align}
\medskip
\paragraph{\underline{Estimate for $\mathcal{I}_{11}$}:}
Using $|\Dt \sigma|=O(\sigma)$, we  simply observe
\begin{align}
\label{i-11}&-2\beta \int \sigma^{-n+3} (\log\sigma)v \Dt v \Dt \sigma\\
&\notag\geq -c_4\beta^{3}\int \sigma^{-n+1}(\log\sigma)^2\,v^2-\frac{1}{4\beta} \int \sigma^{-n+5} (\Dt v)^2.
\end{align}

Thus from \eqref{final-estimate-Zv} - \eqref{i-11} it follows that for large enough $\beta$ and sufficiently small $R_0$, we have
\begin{align}
\label{an-intermediate-estimate}& \sum_{i=2}^{11} \mathcal I_i\\
\notag&\geq 4 \beta\int \sigma^{-n}((n-2)(-\log \sigma)) (Zv)^{2}+\frac{31}{8} \beta\int \sigma^{-n} (Zv)^{2}+\frac{59}{10} \beta^3 \int \sigma^{-n} (\log\sigma)^2 v^2\\
\notag& -2(n-4)\int \sigma^{-n+2} Zv\cdot\Dt v -\frac{1}{\beta} \int \sigma^{-n+5} (\Dt v)^2-\frac{5}{2} \beta \int \sigma^{-n+2} \langle A \nabla v, \nabla v\rangle.
\end{align}

Our focus now will be to  get rid of  the terms $-2(n-4)\int \sigma^{-n+2} Zv\cdot\Dt v$ and $-\frac{1}{\beta} \int \sigma^{-n+5} (\Dt v)^2$ in \eqref{an-intermediate-estimate} above. We split the term $-2(n-4)\int \sigma^{-n+2} Zv\cdot\Dt v$ into two parts, namely
\begin{align*}
&\mathcal{A}:=-2(n-2)\int \sigma^{-n+2} Z(v)\cdot\Dt v\\
&\mathcal{B}:=4 \int \sigma^{-n+2} Z(v)\cdot\Dt v.
\end{align*}
We handle $\mathcal{A}$ and $\mathcal{B}$ separately. 
\medskip
\\
\paragraph{\underline{Estimate for $\mathcal{A}$}:} Let us couple $\mathcal{A}$ with $\frac{1}{2}\int \sigma^{-n+4} A^2$, where we recall, $$\frac{1}{2}\int \sigma^{-n+4} A^2=\frac{1}{2}\int \sigma^{-n+4} [2 \sigma^{-2}(-\beta \log\sigma)\, Zv-\Dt v]^{2}.$$ In that direction, let us observe
\begin{align}
&\label{estimate-A} \mathcal{A}+\frac{1}{2}\int \sigma^{-n+4} A^2\\
\notag &=-2(n-2)\int \sigma^{-n+2} Z(v)\cdot\Dt v+\frac{1}{2}\int \sigma^{-n+4} [2 \sigma^{-2}(-\beta \log\sigma)\, Zv-\Dt v]^{2}\\
&\notag=-2(n-2)\int \sigma^{-n+2} Z(v)\cdot\Dt v\\
\notag&+\int \sigma^{-n+4} \bigg[\sqrt{2} \sigma^{-2}(-\beta \log\sigma)\, Zv-\frac{1}{\sqrt{2}}\Dt v\bigg]^{2}\\
&\notag=\int \sigma^{-n+4} \bigg[\sqrt{2} \sigma^{-2}(-\beta \log\sigma)\, Zv-\frac{1}{\sqrt{2}}\Dt v +\sqrt{2} (n-2)\frac{Zv}{\sigma^2}\bigg]^{2}\\
\notag &-4 \beta\int \sigma^{-n}((n-2)(-\log \sigma)) (Zv)^{2}-2(n-2)^2\int \sigma^{-n}(Zv)^2\\
&\geq -4 \beta\int \sigma^{-n}((n-2)(-\log \sigma)) (Zv)^{2}-2(n-2)^2\int \sigma^{-n}(Zv)^2\notag.
\end{align}
Here we would like to emphasize  that the negative term appearing above, i.e., $-4 \beta\int \sigma^{-n}((n-2)(-\log \sigma)) (Zv)^{2}$  will be cancelled by the exact positive term present in \eqref{an-intermediate-estimate} and we will eventually choose $\beta$ large enough such that

\begin{equation}\label{vh1}
2(n-2)^2\int \sigma^{-n}(Zv)^2  \leq \frac{\beta}{8} \int \sigma^{-n} (Zv)^2.\end{equation} 

Now we focus on getting rid of  the negative term $-\frac{1}{\beta} \int \sigma^{-n+5} (\Dt v)^2$ present in \eqref{an-intermediate-estimate}. This will be tackled by $\mathcal{B}$ and $\frac{1}{2}\int \sigma^{-n+4} A^2$. To see that, we observe
\paragraph{\underline{Estimate for $\mathcal{B}$}:}
\begin{align}
\label{estimate-B}&\mathcal{B}+\frac{1}{2}\int \sigma^{-n+4} A^2\\
\notag &=4 \int \sigma^{-n+2} Zv\cdot\Dt v+\frac{1}{2}\int \sigma^{-n+4} [2 \sigma^{-2}(-\beta \log\sigma)\, Zv-\Dt v]^{2}\\
\notag &=4 \int \sigma^{-n+2} Zv\cdot\Dt v+\frac{1}{2}\int \sigma^{-n+4} \bigg[2 \sigma^{-2}(-\beta \log\sigma)\, Zv-\left(1-\frac{2}{(-\beta \log\sigma)}\right)\Dt v-\frac{2\Dt v}{\beta (-\log\sigma)}\bigg]^{2}\\
\notag&=4 \int \sigma^{-n+2} Zv\cdot\Dt v+\frac{1}{2}\int \sigma^{-n+4} \bigg[2 \sigma^{-2}(-\beta \log\sigma)\, Zv-\left(1-\frac{2}{(-\beta \log\sigma)}\right)\Dt v\bigg]^{2}\\
&\notag -2 \int \sigma^{-n+4}\bigg[2 \sigma^{-2}(-\beta \log\sigma)\, Zv-\left(1-\frac{2}{(-\beta \log\sigma)}\right)\Dt v\bigg]\frac{\Dt v}{\beta(-\log\sigma)}+2\int \sigma^{-n+4} \frac{(\Dt v)^2}{(\beta \log\sigma)^2}\\
\notag &\geq \int \sigma^{-n+4} \frac{(\Dt v)^2}{(\beta \log\sigma)^2}\left(2+2\beta (-\log\sigma)-4\right)\\
&\geq \frac{1}{\beta} \int \sigma^{-n+5} (\Dt v)^2,\notag
\end{align}
provided we choose $\beta$ large enough and $R_0$ small enough,  From \eqref{an-intermediate-estimate},  \eqref{estimate-A}, \eqref{vh1} and \eqref{estimate-B} we thus have\begin{align}
\label{a-semi-final-estimate} \int \sigma^{-n+4} \, e^{\beta(\log \sigma)^2}(\La
	 u-\Dt u)^2
	\notag&\geq \frac{59}{10} \beta^3 \int \sigma^{-n} (\log\sigma)^2 v^2+\frac{15}{4} \beta\int \sigma^{-n} (Zv)^{2}\\
	&\notag-\frac{5}{2} \beta \int \sigma^{-n+2} \langle A \nabla v, \nabla v\rangle,
	\end{align}
	which finishes the proof of the theorem.
	\end{proof}
Now in order to incorporate the gradient term in our carleman estimate in Theorem \ref{hardy-BGG},  we  will use an interpolation type argument.  As an intermediate step, we first relate  $\int \sigma^{-n+2} \langle A \nabla v, \nabla v\rangle$ with \\ $\int \sigma^{-n+2 } e^{\beta (\log \sigma)^2}\langle A\nabla u, \nabla u \rangle$.
\begin{lemma}
The following holds true for $R_0$ small enough and $\beta$ large.

    \begin{equation}\label{dom1-BGG}
	\int \sigma^{-n+2 } e^{\beta (\log \sigma)^2}\langle A\nabla u, \nabla u \rangle \geq  \int \sigma^{-n+2} \langle A\nabla v, \nabla v \rangle + \frac{4\beta^2}{5} \int \sigma^{-n} (\log \sigma)^2 v^2.  \end{equation}
\
\end{lemma}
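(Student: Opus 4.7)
The idea is to express everything in terms of $v = e^{\frac{\beta}{2}(\log \sigma)^2} u$ and exploit the explicit structure of $A$ and the weight $\sigma$. Since $u = e^{-\frac{\beta}{2}(\log \sigma)^2} v$, a direct differentiation yields
\[
\nabla u = e^{-\frac{\beta}{2}(\log \sigma)^2} \Big[\nabla v - \beta (\log \sigma) \tfrac{\nabla \sigma}{\sigma}\, v \Big].
\]
Consequently, expanding the quadratic form gives
\[
e^{\beta (\log \sigma)^2} \langle A\nabla u, \nabla u\rangle = \langle A\nabla v, \nabla v\rangle - 2\beta(\log \sigma)\, v \tfrac{\langle A\nabla v, \nabla \sigma\rangle}{\sigma} + \beta^2 (\log \sigma)^2 v^2\, \tfrac{\langle A\nabla \sigma, \nabla \sigma\rangle}{\sigma^2}.
\]

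The first step is to recognize, using the definition \eqref{vector-field} of $Z$, that $\langle A\nabla v, \nabla \sigma\rangle = Zv/\sigma$. Moreover, using $\partial_k \sigma = g_{km}(0,t) x_m/\sigma$, the identity $a_{ij}(0,t) g_{il}(0,t) = \delta_{jl}$, and $a_{ij}(x,t) = a_{ij}(0,t) + O(|x|)$, one computes $\langle A\nabla \sigma,\nabla \sigma\rangle = 1 + O(|x|)$. Multiplying by $\sigma^{-n+2}$ and integrating, we thus obtain
\begin{align*}
\int \sigma^{-n+2} e^{\beta (\log \sigma)^2} \langle A\nabla u, \nabla u\rangle  = &\ \int \sigma^{-n+2} \langle A\nabla v, \nabla v\rangle  - 2\beta \int \sigma^{-n}(\log \sigma)\, v\, Zv \\
& + \beta^2 \int \sigma^{-n}(\log \sigma)^2 v^2 (1 + O(|x|)).
\end{align*}

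The second step is to integrate by parts in the cross term. Writing $v\, Zv = \tfrac{1}{2} Z(v^2)$ and using the divergence theorem for the vector field $Z$, together with Proposition~\ref{useful-estimates}(iii)--(iv) (which give $\operatorname{div} Z = n + O(|x|)$, $Z\sigma = \sigma + O(|x|^2)$), one finds
\[
-2\beta \int \sigma^{-n}(\log \sigma)\, v \, Zv = \beta \int \sigma^{-n} v^2 + \beta\cdot O(1) \int \sigma^{-n+1}|\log \sigma|\, v^2,
\]
where the leading cancellation $\tfrac{n}{2}\log\sigma - \tfrac{n}{2}\log\sigma$ removes the potentially dangerous $\beta \sigma^{-n}\log\sigma$ term. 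The first term on the right is non-negative, so it can simply be dropped.

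Finally, for $R_0$ small enough the factor $(1 + O(|x|))$ in front of $\beta^2 \int \sigma^{-n}(\log \sigma)^2 v^2$ is at least $9/10$, and for $\beta$ large the remainder $\beta \cdot O(1) \int \sigma^{-n+1}|\log\sigma|\, v^2$ is absorbed into a small fraction of $\beta^2 \int \sigma^{-n}(\log \sigma)^2 v^2$ (since $\sigma \leq R_0$ and $\beta$ is large). This yields the claimed lower bound with constant $4/5$. The main technical care lies in step two, namely tracking which lower-order terms can be absorbed and verifying that the leading $\log\sigma$ contributions in the cross term cancel exactly; everything else is algebra dictated by the substitution.
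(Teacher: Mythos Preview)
Your proof is correct and follows essentially the same route as the paper: expand $\langle A\nabla u,\nabla u\rangle$ via the substitution $u=e^{-\frac{\beta}{2}(\log\sigma)^2}v$, use the identity $\langle A\nabla\sigma,\nabla\sigma\rangle=1+O(\sigma)$, and handle the cross term $-2\beta\int\sigma^{-n}(\log\sigma)\,vZv$ by integration by parts in $Z$. Your treatment of the cross term is in fact a bit more precise than the paper's---you identify the exact leading contribution $\beta\int\sigma^{-n}v^2$ and note it is non-negative, whereas the paper simply records the cruder bound $\geq -C\beta\int\sigma^{-n}v^2$ and absorbs it using $\beta$ large---but the argument is otherwise identical.
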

\begin{proof}
We have, 
	\begin{align}\label{on0}
		&\int \sigma^{-n+2} e^{\beta (\log \sigma)^2} \langle A\nabla u, \nabla u \rangle= \int \sigma^{-n+2} e^{\beta (\log \sigma)^2}\langle A \nabla e^{-\beta/2 (\log \sigma)^2} v,\nabla e^{-\beta/2 (\log \sigma)^2} v \rangle\\
		&=\int \sigma^{-n+2}  \bigg[\frac{(\beta \log \sigma)^2}{\sigma^2} \, v^2 \langle A \nabla \sigma, \nabla \sigma \rangle+2\frac{(-\beta \log \sigma)}{\sigma^2} v Zv +  \, <A\nabla v, \nabla v> \bigg]. \notag
	\end{align}
	
	We now claim that
	\begin{equation}\label{clm1}
	\langle A \nabla \sigma, \nabla \sigma \rangle= 1 + O(\sigma). \end{equation}
	
	\eqref{clm1} is seen by writing $a_{ij}(x,t)= a_{ij}(0,t) + b_{ij}(x,t)$. We thus have
	\begin{align}\label{on1}
	&\langle A \nabla \sigma, \nabla \sigma \rangle = \frac{a_{ij}(0,t) g_{il}(0,t) x_l g_{jm}(0,t) x_m}{\sigma^2} + b_{ij} (x,t) \partial_i \sigma \partial_j \sigma
	\\
	& = \frac{\delta_{jl} x_l  g_{jm} x_m}{\sigma^2} + O(\sigma)\ \text{(using $|\nabla \sigma|= O(1)$ and $|b_{ij}(x,t)|= O(\sigma)$.)}
	\notag
	\\
	&= \frac{g_{jm} x_j x_m}{\sigma^2} + O(\sigma)\notag\\
	&= 1+ O(\sigma).\notag
	\end{align}
	Also we have
	\begin{align}\label{on2}
	&2\int \sigma^{-n}  (-\beta \log \sigma) vZv = \beta \int \operatorname{div}(\sigma^{-n} Z \log \sigma) v^2 \geq - C\beta \int \sigma^{-n} v^2.
	\end{align}
	Using \eqref{on1} and \eqref{on2} in \eqref{on0}, we find that \eqref{dom1-BGG} follows 	
	provided $\beta$ is sufficiently  large and $R_0$ is small enough. 
\end{proof}
Using \eqref{dom1-BGG} in \eqref{semi-final-BGG} we have
\begin{align}
  \label{almost-final-estimate} &\int \sigma^{-n+4} \, e^{\beta(\log \sigma)^2}(\La
	 u-\Dt u)^2+\frac{5}{2}\beta \int \sigma^{-n+2 } e^{\beta (\log \sigma)^2}\langle A\nabla u, \nabla u \rangle\\
	 \notag & \geq \frac{79}{10} \beta^3 \int \sigma^{-n} (\log\sigma)^2 v^2+\frac{15}{4} \beta\int \sigma^{-n} (Zv)^{2}.
\end{align}
We now state the prove the relevant interpolation lemma that allows us to incorporate the gradient term in our main Carleman estimate.

\begin{lemma}[Interpolation lemma]
The following holds true for small $R_0$ and sufficiently large $\beta$
	\begin{align}\label{grad1-BGG}
	& \beta  \int \sigma^{-n+2} e^{\beta (\log \sigma)^2}  \langle A \nabla u, \nabla u\rangle \leq 	C \int \sigma^{-n+4} e^{\beta (\log \sigma)^2} (\La u-\Dt u)^2.
	\end{align}
\end{lemma}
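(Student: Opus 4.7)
The strategy is to bound the weighted Dirichlet integral
$\mathcal G := \int \sigma^{-n+2} e^{\beta(\log\sigma)^2}\langle A\nabla u,\nabla u\rangle$
in terms of the source term $\mathcal F := \int \sigma^{-n+4} e^{\beta(\log\sigma)^2}(\La u-\Dt u)^2$ and the weighted zero-order quantity $\mathcal V := \int v^2 \sigma^{-n}(\log\sigma)^2$, with $v := e^{\beta(\log\sigma)^2/2}u$, and then to close the estimate by absorption against \eqref{almost-final-estimate}. Setting $W=\sigma^{-n+2}e^{\beta(\log\sigma)^2}$, the standard integration by parts gives
\begin{align*}
\mathcal G = -\int u W (\La u-\Dt u) + \frac{1}{2}\int u^2\,\Dt W - \int u\,\langle \nabla W, A\nabla u\rangle.
\end{align*}
The middle term is easily controlled: from $|\Dt\sigma|\leq C\sigma$ one gets $|\Dt W|\leq C\beta|\log\sigma|W$, yielding a bound of $C\beta R_0^2\mathcal V$, negligible for $R_0$ small.

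The crux is to evaluate the cross term sharply. Using $\nabla W = W\sigma^{-1}[(-n+2)+2\beta\log\sigma]\nabla\sigma$ and $\langle A\nabla\sigma,\nabla u\rangle = Zu/\sigma$, one rewrites
$-\int u\langle \nabla W,A\nabla u\rangle = -\int \phi\, u Zu$
with $\phi:=\sigma^{-n}[(-n+2)+2\beta\log\sigma]e^{\beta(\log\sigma)^2}$. A second integration by parts based on $uZu=\frac{1}{2}Z(u^2)$ produces $-\int \phi uZu = \frac{1}{2}\int u^2\operatorname{div}(\phi Z)$, and a direct expansion using parts (iii), (iv) of Proposition~\ref{useful-estimates} yields
\begin{align*}
\operatorname{div}(\phi Z) = \sigma^{-n}e^{\beta(\log\sigma)^2}\bigl[4\beta^2(\log\sigma)^2 + O(\beta|\log\sigma|)\bigr](1+O(\sigma)).
\end{align*}
Hence this piece contributes exactly $2\beta^2\mathcal V\,(1+o(1))$ as $\beta\to\infty$ and $R_0\to 0$. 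The source term is handled via weighted Cauchy--Schwarz: for any $\delta>0$,
\begin{align*}
\Bigl|\int u W(\La u-\Dt u)\Bigr| \leq \frac{\delta\beta^3}{2}\mathcal V + \frac{1}{2\delta\beta^3}\mathcal F,
\end{align*}
where we used $(\log\sigma)^2\geq 1$ for $R_0\leq 1/e$. Collecting these contributions, choosing $\delta=1/\beta^2$, and multiplying by $\beta$, we obtain
\begin{align*}
\beta\mathcal G \leq (2+o(1))\beta^3 \mathcal V + \frac{1}{2}\mathcal F.
\end{align*}

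To conclude, I would invoke \eqref{almost-final-estimate}, which gives $\beta^3\mathcal V \leq \frac{10}{79}(\mathcal F+\frac{5\beta}{2}\mathcal G)$, hence $2\beta^3\mathcal V \leq \frac{20}{79}\mathcal F + \frac{50}{79}\beta\mathcal G$. Substituting into the previous display and rearranging,
\begin{align*}
\Bigl(1-\frac{50}{79}(1+o(1))\Bigr)\beta\mathcal G \leq \Bigl(\frac{1}{2} + \frac{20}{79}(1+o(1))\Bigr)\mathcal F;
\end{align*}
since $50/79<1$, this gives $\beta\mathcal G\leq C\mathcal F$ for a universal constant $C>0$, i.e.\ \eqref{grad1-BGG}. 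The main obstacle is producing the \emph{sharp} leading coefficient $2$ in $\beta\mathcal G \lesssim 2\beta^3\mathcal V + C\mathcal F$: a direct Cauchy--Schwarz on $\int u\langle \nabla W,A\nabla u\rangle$ would yield the coefficient $4$, which fails the absorption threshold $79/25\approx 3.16$ forced by the constants in \eqref{almost-final-estimate}. Extracting the correct constant requires the second integration by parts through the vector field $Z$ together with the explicit computation of $\operatorname{div}(\phi Z)$, and Proposition~\ref{useful-estimates} ensures that the lower-order perturbations stemming from the variable coefficient structure of $A$ are all absorbed as $R_0\to 0$.
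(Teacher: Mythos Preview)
Your argument is correct and follows the same overall strategy as the paper: integrate $\mathcal G$ by parts, control the source and time-derivative pieces by Cauchy--Schwarz and integration in $t$, bound the cross term by a multiple of $\beta^3\mathcal V$, and then absorb via \eqref{almost-final-estimate}. The execution, however, is genuinely different. The paper substitutes $u=e^{-\frac{\beta}{2}(\log\sigma)^2}v$ from the start and expands the cross term $-\beta\int\langle\nabla(\sigma^{-n+2}e^{\beta(\log\sigma)^2}),A\nabla u\rangle u$ in terms of $v$ and $Zv$, arriving at the lower bound $-\frac{31\beta^3}{10}\int\sigma^{-n}(\log\sigma)^2 v^2-\frac{11\beta}{10}\int\sigma^{-n}(Zv)^2$. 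Both negative terms must then be matched simultaneously against the two positive terms of \eqref{almost-final-estimate}, which forces the slightly awkward numerical check $\frac{465}{316}>\frac{11}{10}$ before one obtains the absorption coefficient $\frac{155}{158}<1$.

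Your route is cleaner on this point: by writing $-\int u\langle\nabla W,A\nabla u\rangle=-\int\phi\,uZu=\tfrac12\int u^2\operatorname{div}(\phi Z)$ you convert the cross term \emph{directly} into a pure zero-order term with sharp leading coefficient $2\beta^2\mathcal V$, so only the $\mathcal V$ piece of \eqref{almost-final-estimate} is needed and the absorption reduces to the single inequality $\frac{50}{79}<1$. In effect, the extra integration by parts through $Z$ is doing for you what the paper accomplishes by carrying and balancing the $(Zv)^2$ term. Both arguments rely on Proposition \ref{useful-estimates} in the same way to suppress the $O(\sigma)$ errors coming from the variable-coefficient structure. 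Your remark that a naive Cauchy--Schwarz on the cross term would yield the coefficient $4>\frac{79}{25}$ and fail is exactly the reason the paper resorts to the $v$-expansion; your second integration by parts is an alternative (and arguably more transparent) way to beat this threshold.
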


\begin{proof}
\begin{align}\label{c15-BGG}
	&\beta  \int \sigma ^{-n+2} e^{\beta (\log \sigma )^2}   \langle A \nabla u, \nabla u\rangle 
	=- \beta \int \bigg< \nabla(\sigma ^{-n+2} e^{\beta (\log \sigma )^2} ),  A\nabla(e^{-\beta/2 (\log \sigma )^2}v) \bigg> e^{-\beta/2 (\log \sigma )^2}v \\ & -\beta \int \La(e^{-\beta/2 (\log \sigma )^2}v) \sigma ^{-n+2} e^{\beta/2 (\log \sigma )^2}v.
	\notag
	\\
	\notag &=- \beta \int \bigg< \nabla(\sigma ^{-n+2} e^{\beta (\log \sigma )^2} ),  A\nabla(e^{-\beta/2 (\log \sigma )^2}v) \bigg> e^{-\beta/2 (\log \sigma )^2}v \\ & -\beta \int (\La u-\Dt u) \sigma ^{-n+2} e^{\beta/2 (\log \sigma )^2}v
	\notag-\beta \int \Dt u\, \sigma ^{-n+2} e^{\beta/2 (\log \sigma )^2}v
	\notag\\
	& \leq - \beta \int \bigg< \nabla(\sigma ^{-n+2} e^{\beta (\log \sigma )^2}),  A\nabla(e^{-\beta/2 (\log \sigma )^2}v) \bigg> e^{-\beta/2 (\log \sigma )^2}v\notag\\ & + C \int \sigma ^{-n+4}  e^{\beta (\log \sigma )^2} (\La u-\Dt u)^2 + C\beta^2 \int \sigma ^{-n}   v^2-\beta \int \Dt u\, \sigma ^{-n+2} e^{\beta/2 (\log \sigma )^2}v.
	\notag
	\end{align}
	
	Now by integration by parts in $t$ and by using $|\Dt \sigma|=O(\sigma)$, we find that 
	\begin{align}\label{on5}
	-\beta \int \Dt u\, \sigma ^{-n+2} e^{\beta/2 (\log \sigma )^2}v 	 \leq C_1 \beta^2 \int \sigma^{-n} v^2.
	\end{align}
We thus have
\begin{align}\label{on7}
&\beta  \int \sigma ^{-n+2} e^{\beta (\log \sigma )^2}   \langle A \nabla u, \nabla u\rangle 
\\
& \leq - \beta \int \bigg< \nabla(\sigma ^{-n+2} e^{\beta (\log \sigma )^2}),  A\nabla(e^{-\beta/2 (\log \sigma )^2}v) \bigg> e^{-\beta/2 (\log \sigma )^2}v\notag\\ & + C \int \sigma ^{-n+4}  e^{\beta (\log \sigma )^2} (\La u-\Dt u)^2 + C\beta^2 \int \sigma ^{-n}   v^2.\notag
\end{align}	
Now the term $C\beta^2 \int \sigma^{-n} v^2$ can be estimated using \eqref{almost-final-estimate} and we consequently have
\begin{align}\label{bn01}
	&\beta  \int \sigma ^{-n+2} e^{\beta (\log \sigma )^2}   \langle A \nabla u, \nabla u\rangle   \leq - \beta \int \bigg< \nabla(\sigma ^{-n+2} e^{\beta (\log \sigma )^2}),  A\nabla(e^{-\beta/2 (\log \sigma )^2}v) \bigg> e^{-\beta/2 (\log \sigma )^2}v \\ &+ C \int \sigma ^{-n+4}  e^{\beta (\log \sigma )^2} (\La u-\Dt u)^2 	
	+ C \int \sigma ^{-n+2} e^{\beta (\log \sigma )^2} <A  \nabla u, \nabla u>. \notag
			\end{align}

We are  now left with estimating the term		\[
			\beta \int \bigg< \nabla (\sigma ^{-n+2} e^{\beta (\log \sigma )^2}  ),  A\nabla (e^{-\beta/2 (\log \sigma )^2}v) \bigg> e^{-\beta/2 (\log \sigma )^2}v.\]
			
			This  is done as follows. 
We have  using \eqref{on1}

	\begin{align}\label{c19}
	&\beta \int \langle \nabla (\sigma ^{-n+2} e^{\beta (\log \sigma )^2}  ),  A\nabla(e^{-\beta/2 (\log \sigma )^2}v) \rangle e^{-\beta/2 (\log \sigma )^2}v
	\\
	&=\beta (-n+2) \int \sigma ^{-n}  \left[ \beta(-\log \sigma ) v^2+ \, Zv \cdot v\right]    -\beta \int \sigma ^{-n} 2\beta (-\log \sigma )\left[(-\beta \log \sigma ) v^2+  Zv \cdot v \right]
	\notag
	\\
	& + O(\beta^3) \int \sigma^{-n+1} (\log \sigma)^2 v^2  \geq -\frac{31\beta^{3}}{10}  \int \sigma ^{-n} (\log \sigma )^2  v^2 - \frac{11 \beta}{10} \int \sigma ^{-n} (Zv)^2 \ \text{(for all large $\beta$)}\notag,
	\end{align}
	where in the last inequality we used that
	
	\begin{align}
	&\left| \beta \int \sigma^{-n} 2\beta (-\log \sigma) Zv \cdot v \right| \leq  \beta^3 \int \sigma^{-n} (\log \sigma)^2 v^2  + \beta \int \sigma^{-n} (Zv)^2\ \text{and}\\
	&\left| O(\beta^3) \int \sigma^{-n+1} (\log \sigma)^2 v^2 + \beta  (-n+2) \int \sigma^{-n} \left[ \beta (-\log \sigma) v^2 + Zv \cdot v \right] \right| \notag\\ & \leq \frac{\beta^3}{10} \int \sigma^{-n} (\log \sigma)^2 v^2 + \frac{\beta}{10} \int \sigma^{-n} (Zv)^2\ \text{for small enough $R_0$}.\notag
	\end{align}
	Now   by multiplying the  inequality  \eqref{almost-final-estimate}  on both sides by $\frac{10}{79} \times \frac{31}{10}$, we have
	\begin{align}\label{kjh1}
	&\frac{31}{10} \beta^3 \int \sigma^{-n} (\log \sigma)^2  v^2 + \frac{465 \beta}{316} \int \sigma^{-n }(Zv)^2\\  &\leq \frac{155 \beta}{158} \int \sigma^{-n+2} e^{\beta (\log \sigma)^2}  \langle A \nabla u, \nabla u \rangle +C\int \sigma^{-n+4} \, e^{\beta(\log \sigma)^2}(\La
	 u-\Dt u)^2.\notag	\end{align}
	 
	 Since $\frac{465}{316} > \frac{11}{10}$, we obtain from \eqref{kjh1} that 
	 
	 \begin{align}\label{kjh2}
	&\frac{31}{10} \beta^3 \int \sigma^{-n} (\log \sigma)^2  v^2 + \frac{11\beta}{10} \int \sigma^{-n }(Zv)^2\\  &\leq \frac{155 \beta}{158} \int \sigma^{-n+2} e^{\beta (\log \sigma)^2}  \langle A \nabla u, \nabla u \rangle +C\int \sigma^{-n+4} \, e^{\beta(\log \sigma)^2}(\La
	 u-\Dt u)^2.\notag	\end{align}

	 Using \eqref{c19} and  \eqref{kjh2} in  \eqref{bn01} we have	 
	
	\begin{align}\label{grad}
	& \beta  \int \sigma ^{-n+2} e^{\beta (\log \sigma )^2}  \langle A \nabla u, \nabla u\rangle \leq 	C \int \sigma ^{-n+4} e^{\beta (\log \sigma )^2} (\La u - u_t)^2 \\
	\notag &+ \left( C +\frac{155}{158} \beta\right) \int \sigma ^{-n+2}  e^{\beta (\log \sigma)^2}\langle A\nabla u, \nabla u \rangle.
	\end{align}
	Now for all $\beta$ large  enough, we observe that the  following term in \eqref{grad} above, i.e.
	\[
	\left( C +\frac{155}{158} \beta \right) \int \sigma ^{-n+2} e^{\beta (\log \sigma)^2}  \langle A\nabla u, \nabla u \rangle,\] can be absorbed in the left hand side of \eqref{grad} and we thus infer that the following estimate holds,
	
	\begin{align}\label{grad1-BGG}
	& \beta  \int \sigma ^{-n+2} e^{\beta (\log \sigma )^2}  \langle A \nabla u, \nabla u\rangle \leq 	C \int \sigma ^{-n+4} e^{\beta (\log \sigma )^2} (\La u-\Dt u)^2 .
	\end{align}
	The conclusion thus follows.
	
	\end{proof}

	\begin{proof}[Proof of Theorem \ref{hardy-BGG}]
	The desired estimate \eqref{har1-BGG} now follows from \eqref{almost-final-estimate} and \eqref{grad1-BGG}.
	
	\end{proof}
	
	With Theorem \ref{hardy-BGG} in hand, we now show that  nontrivial solutions to \eqref{main-diff-ineq} decay less than exponentially.

\begin{proof}[Proof of Theorem~\ref{thm1-bgg}]
We follow the strategy as in \cite{BGM, V}. The proof is also partly similar to that of Proposition \ref{expdecay}.
As before,  in what follows  we assume that $u$ solves \eqref{main-diff-ineq} in $B_{R} \times (-T,T)$, instead of $ B_{R}\times(0,T)$. By applying   Proposition \ref{expdecay}, we have  that

\begin{equation}\label{po}
    \int_{B_s\times (-T/2,T/2)}u^2\lesssim e^{-\frac{C}{s^{2/3}}},~\text{as}~s\rightarrow0
\end{equation}
for some constant $C>0$. We first show that $u(\cdot, t) =0$ for $|t|  \leq \frac{T}{2}$.   Then  by repeating the arguments, one can spread the zero set  for $|t| > \frac{T}{2}$  to finally assert that $u \equiv 0$.

In view of \eqref{po}, we  now  let $T/2$ as our new  $T$.
  Without loss of generality, we  assume that $R<1$. Let $\phi(x)$ be a smooth function such that
$$\phi(x)\equiv \begin{cases} 0,\, |x| <\frac{r_{1}}{2};\\
1,\,r_{1}< |x|< r_{2};\\
0,\,|x|>r_{3},
\end{cases}$$
where, $0 < r_1 < r_2/2 < 4r_2 <r_3 < R/2$ will be fixed at a later stage. We subsequently let $T_2= T/2$ and $T_1= 3T/4$, so that $0<T_2<T_1<T$. As before, we let $\eta(t)$ be a smooth even function such that $\eta(t) \equiv 1$ when $|t| < T_2$, $\eta(t) \equiv 0$, when $|t| > T_1$. More precisely
\begin{equation}\label{d4}
\eta(t)= \begin{cases} 0\ \ \ \ \ \ \ \ \ -T\le t\le -T_1
\\
\exp \left(-\frac{T^3(T_2+t)^4}{(T_1 +t)^3(T_1-T_2)^4} \right)\ \ \ \ \ \ \ -T_1\le t \le -T_2,
\\
1,\ \ \ \ \ \ \  -T_2\le t \le 0.
 \end{cases}
 \end{equation}
  Without loss of generality we assume that
 \begin{equation}\label{assume-bgg}
   \int_{ B_{\frac{M r_2}{N}} \times (-T_2, T_2)} u^2 \neq 0,
  \end{equation}
where $M, N$ are constants such that $M|x|\leq \sigma(x,t)\leq N|x|$. Otherwise, the result in \cite{V} implies $u \equiv 0$  in $B_R \times (-T_2, T_2)$ and by arguments that follow, we could conclude that $u \equiv 0$ also  for $|t| > T_2$. 

Now, with $u$ as in Theorem \ref{thm1-bgg} we let  $v= \phi \eta u$. Limiting argument allows to use such $v$ in the Carleman estimate \eqref{hardy-BGG}, obtaining
\begin{align}\label{thm-bgg-carleman}
	&\beta^3 \int \sigma^{-n} e^{\beta (\log \sigma)^2}v^2 +  \beta \int  \sigma^{-n+2} e^{\beta (\log \sigma)^2} \langle A \, \nabla v, \nabla v\rangle\\
	&\leq C  \, \int \sigma^{-n+4} e^{\beta (\log \sigma)^2} (\La  v-\Dt v)^2. \notag
	\end{align}
Incorporating the following
\begin{align*}
\La v-\Dt v=\phi \eta (\La u - u_t)+ 2\eta \langle A\nabla\phi, \nabla u\rangle+u(\eta \operatorname{div}(A \nabla \phi)-\phi \eta_{t}),
\end{align*}
and also by using \eqref{assump}, we obtain from \eqref{thm-bgg-carleman}
\begin{align}
\notag	&\beta^3 \int \sigma(x, t)^{-n} e^{\beta (\log \sigma(x, t))^2}v^2 +  \beta \int   \sigma(x, t)^{-n+2} e^{\beta (\log \sigma(x, t))^2} \langle A \, \nabla v, \nabla v\rangle\\
	&\leq C \bigg[ \, \int \sigma(x, t)^{-n+4} e^{\beta (\log \sigma(x, t))^2} \phi^2 \eta^2 (\La u - u_t)^2\notag\\
	&+ \int \eta^2\bigg[|\nabla u|^2 |\nabla \phi|^2+u^2 |\nabla^2 \phi|^2+u^2 |\nabla \phi|^2\bigg]\sigma(x, t)^{-n+4} e^{\beta (\log \sigma(x, t))^2}\notag\\
	&+\int u^2 \phi^2 \eta_{t}^2 \sigma(x, t)^{-n+4} e^{\beta(\log\sigma(x, t))^2} \bigg]\notag\\
&\leq C\bigg[  \, \int \sigma(x, t)^{-n} e^{\beta (\log \sigma(x, t))^2} \phi^2 \eta^2 u^2\notag\\
	&+ \int \eta^2\bigg[|\nabla u|^2 |\nabla \phi|^2+u^2 |\nabla^2 \phi|^2+u^2 |\nabla \phi|^2\bigg]\sigma(x, t)^{-n+4} e^{\beta (\log \sigma(x, t))^2}\notag\\
	&+ \int u^2 \phi^2 \eta_{t}^2 \sigma(x, t)^{-n+4} e^{\beta(\log\sigma(x, t))^2}\bigg],
	\end{align}
where in the final inequality we have used the differential inequality \eqref{main-diff-ineq}.	Therefore, for sufficiently large $\beta$,  we find that the term $C  \, \int \sigma(x, t)^{-n} e^{\beta (\log \sigma(x, t))^2} \phi^2 \eta^2 u^2$ can be absorbed in the left hand side and we thus obtain\begin{align}
&\beta^3 \int \sigma(x, t)^{-n} e^{\beta (\log \sigma(x, t))^2}v^2 +  \beta \int  \sigma(x, t)^{-n+2} e^{\beta (\log \sigma(x, t))^2} \langle A \, \nabla v, \nabla v\rangle\\
&\lesssim\notag \int _{ \{r_1/2 < r < r_1\}\times (-T_1,T_1) }\eta^2 \bigg[|\nabla u|^2 \sigma(x, t)^{-n+2}+u^2 \sigma(x, t)^{-n} \bigg] e^{\beta (\log \sigma(x, t))^2}\\
&+\notag \int _{ \{r_2 < r < r_3\}\times (-T_1,T_1) }\eta^2 \bigg[|\nabla u|^2 \sigma(x, t)^{-n+2}+u^2 \sigma(x, t)^{-n} \bigg] e^{\beta (\log \sigma(x, t))^2}\\
&+\int u^2 \phi^2 \eta_{t}^2 \sigma(x, t)^{-n+4} e^{\beta(\log\sigma(x, t))^2}.\notag
\end{align}
The above inequality can be rewritten as follows
\begin{align}
\label{carleman-2}
&\frac{\beta^3}{2} \int \sigma(x, t)^{-n} e^{\beta (\log \sigma(x, t))^2}v^2 +  \beta \int \sigma(x, t)^{-n+2} e^{\beta (\log \sigma(x, t))^2} \langle A \, \nabla v, \nabla v\rangle\\ 
&\lesssim\notag \int _{ \{r_1/2 < r < r_1\}\times (-T_1,T_1) }\eta^2 \bigg[|\nabla u|^2 \sigma(x, t)^{-n+2}+u^2 \sigma(x, t)^{-n} \bigg] e^{\beta (\log \sigma(x, t))^2}\\
&+\notag \int _{ \{r_2 < r < r_3\}\times (-T_1,T_1) }\eta^2 \bigg[|\nabla u|^2 \sigma(x, t)^{-n+2}+u^2 \sigma(x, t)^{-n} \bigg] e^{\beta (\log \sigma(x, t))^2}\\
\notag &+\int u^2 \phi^2 \eta_{t}^2 \sigma(x, t)^{-n+4} e^{\beta(\log\sigma(x, t))^2}-\frac{\beta^3}{2} \int \sigma(x, t)^{-n} e^{\beta (\log \sigma(x, t))^2}v^2:=I_1+I_2+I_3+I_4.
\end{align}
Recall the following notations introduced in \eqref{sigmas}
\begin{align}
    \sigma_1(r_1, r_2, T):=\inf_{\{r_1<|x|<r_2\}\times (-T, T)}\sigma(x,t),~\text{and}~ \sigma_2(r_1, r_2, T):=\sup_{\{r_1<|x|<r_2\}\times (-T, T)}\sigma(x,t).
\end{align}
\medskip

\paragraph{\underline{\textbf{Estimate for $I_1$}}}
\begin{align}
\notag I_1&=\int _{ \{r_1/2 < r < r_1\}\times (-T_1,T_1) }\eta^2 \bigg[|\nabla u|^2 \sigma(x, t)^{-n+2}+u^2 \sigma(x, t)^{-n} \bigg] e^{\beta (\log \sigma(x, t))^2}\\
\notag &\lesssim \sigma_{1}(r_1/2, r_1, T)^{-n}\eer\int_{\{r_1/2<|x|<r_1\}\times (-T_1, T_1)} u^2\\
&+\sigma_{1}(r_1/2, r_1, T)^{-n+2}\eer\int_{\{r_1/2<|x|<r_1\}\times (-T_1, T_1)} |\nabla u|^2\notag\\
&\lesssim \sigma_{1}(r_1/2, r_1, T)^{-n}\eer\int_{\{r_1/4<|x|<3{r_1}/2\}\times (-T_1, T_1)} |u|^2,
\label{expo-growth-1}
\end{align}
where in the last inequality above,  we  used  the energy estimate from Lemma~\ref{cac-bgg}. Similarly

\medskip

\paragraph{\underline{\textbf{Estimate for $I_2$}}}
\begin{align}
\notag I_2&=\int _{ \{r_2 < r < r_3\}\times (-T_1,T_1) }\eta^2 \bigg[|\nabla u|^2 \sigma^{-n+2}+u^2 \sigma^{-n} \bigg] e^{\beta (\log \sigma)^2}\\
&\lesssim \sigma_{1}(r_2, r_3, T)^{-n}\ees \int_{B_R\times (-T, T)} u^2.
\label{expo-growth-2}
\end{align}

\medskip

Let us decompose $I_3$ into several parts.
\begin{align}
\notag I_3 &:=\int u^2 \phi^2 \eta_{t}^2 \sigma(x, t)^{-n+4} e^{\beta(\log\sigma(x, t))^2}\\
\notag &:=\int_{\{r_1/2 < |x| < r_1\} \times (-T_1,T_1)} u^2 \phi^2 \eta_{t}^2 \sigma(x, t)^{-n+4} e^{\beta(\log\sigma(x, t))^2}\\
\notag &+\int_{\{r_1 < |x| < r_2\} \times (-T_1,T_1)} u^2 \phi^2 \eta_{t}^2 \sigma(x, t)^{-n+4} e^{\beta(\log\sigma(x, t))^2}\\
\notag &+\int_{\{r_2 < |x| < r_3\} \times (-T_1, T_1)} u^2 \phi^2 \eta_{t}^2 \sigma(x, t)^{-n+4} e^{\beta(\log\sigma(x, t))^2}\\
&\notag\lesssim \sigma_{1}(r_1/2, r_1, T)^{-n}\eer\int_{\{r_1/2 < |x| < r_1\} \times (-T_1,T_1)} u^2\\
\notag &+\sigma_{1}(r_2, r_3, T)^{-n}\ees\int_{\{r_2 < |x| < r_3\}\times (-T_1, T_1)} u^2\\
&+\int_{\{r_1 < |x| < r_2\} \times (-T_1,T_1)} u^2 \eta_{t}^2 \sigma(x, t)^{-n+4} e^{\beta(\log\sigma(x, t))^2}\label{expo-growth-3}=:I_{31}+I_{32}+I_{33}.
\end{align}
At this point, let us club the term $I_{33}$ with $I_4$. Since, the function $\eta_t$ is supported in the set $(-T_1,-T_2) \cup (T_2, T_1)$, if we denote $\Omega = \{r_1 < |x| < r_2\}\times [(-T_1, -T_2) \cup (T_2, T_1) ]$, we can bound
\begin{align}
\notag I_{33}+I_4:=&\int_{\{r_1 < |x| < r_2\}\times (-T_1, T_1)} u^2 \eta_{t}^2 \sigma(x, t)^{-n+4} e^{\beta(\log\sigma(x, t))^2}-\frac{\beta^3}{2} \int_{B_R} \sigma(x, t)^{-n} e^{\beta (\log \sigma(x, t))^2}v^2\\
& \leq \int_{\Omega} \sigma(x, t)^{-n} e^{\beta (\log \sigma(x, t))^2} u^2 \eta^2 \left(C \sigma(x, t)^{4}   \frac{\eta_t^2}{\eta^2} - \frac{\beta^3}{2}\right)\notag\\
& \lesssim \int_{\Omega} \sigma(x, t)^{-n} e^{\beta (\log \sigma(x, t))^2} u^2 \eta^2 \left(C \sigma(x, t)^{3}   \frac{\eta_t^2}{\eta^2} - \frac{\beta^3}{2}\right)\label{eta-t},
\end{align}
where the last inequality relies on the fact that $\sigma$ is very small. The rest of the proof is devoted in proving the following claim.
\begin{align}
\text{\textbf{Claim:}} \,\, \int_{\Omega} \sigma(x, t)^{-n} e^{\beta (\log \sigma(x, t))^2} u^2 \eta^2 \left(C \sigma(x, t)^{3}   \frac{\eta_t^2}{\eta^2} - \frac{\beta^3}{2}\right)\leq C \int_{B_{R}\times(-T, T)} u^2.  
\label{thm-claim}
\end{align}
It is sufficient to prove \eqref{thm-claim}  over the region $\Omega^{-} = \{r_1 < |x| < r_2\} \times (-T_1, -T_2)$, since the other part can be handled by symmetry. Now, if $-T_1\le t\le -T_2$, note that $T_1 -T_2 = \frac T4$, $|T_2 + t| \le T_1 - T_2 = \frac T4$, and that $\frac 34 T \le 4T_1- 3T_2+t \le T$, it follows similarly as before in the proof of Proposition \ref{expdecay} that
\begin{equation*}\label{y2}
\bigg|\frac{\eta_t}{\eta}\bigg| =\bigg|\frac{T^3 (T_2 + t)^3 (4T_1- 3T_2+t)}{ (T_1-T_2)^4 (T_1 +t)^4}\bigg| \leq   \frac{4 T^3}{|T_1 +t|^4}.
\end{equation*}
Consequently we  obtain
\begin{align*}
&\int_{\Omega^-} \sigma(x, t)^{-n} e^{\beta (\log \sigma(x, t))^2} u^2 \eta^2 \left(C \sigma(x, t)^{3}   \frac{\eta_t^2}{\eta^2} - \frac{\beta^3}{2}\right) 
 \leq  \int_{\Omega^-} \sigma(x, t)^{-n} e^{\beta (\log \sigma(x, t))^2} u^2 \eta^2 \left(C \sigma(x, t)^3  \frac{T^6}{(T_1 +t)^8}  - \frac{\beta^3}{2}\right).
\end{align*}
Trivially
\begin{align}
\label{eta-t-triv} 
\notag&\int_{\Omega^-} \sigma(x, t)^{-n} e^{\beta (\log \sigma(x, t))^2} u^2 \eta^2 \left(C \sigma(x, t)^{3}   \frac{\eta_t^2}{\eta^2} - \frac{\beta^3}{2}\right)\\
&\leq \int_{U} \sigma(x, t)^{-n} e^{\beta (\log \sigma(x, t))^2} u^2 \eta^2 \left(C \sigma(x, t)^3  \frac{T^6}{(T_1 +t)^8}  - \frac{\beta^3}{2}\right),
\end{align}
where
\begin{equation}
    \label{eta-t-useful}
U:=\bigg\{(x,t)\in \Omega^-: \frac{\beta^3}{2}\leq C \sigma(x, t)^3 \frac{T^6}{(T_1+t)^8}\bigg\}.     
\end{equation}
We thus have
\begin{align}\label{c7}
&\int_{\Omega^-} \sigma(x, t)^{-n} e^{\beta (\log \sigma(x, t))^2} u^2 \eta^2 \left(C \sigma(x, t)^{3}   \frac{\eta_t^2}{\eta^2} - \frac{\beta^3}{2}\right)\\
& \leq C \int_{U} \sigma(x, t)^{-n} e^{\beta (\log \sigma(x, t))^2} u^2 \eta \frac{\eta T^6}{(T_1 +t)^8}.\notag
\end{align}
The claim will be achieved if we establish a bound from above of the quantity $\sigma(x, t)^{-n} e^{\beta (\log \sigma(x, t))^2} \eta \frac{\eta T^6}{(T_1 +t)^8}$ in $U$. Appealing to the exponential decay of $\eta$, at $t = - T_1$, see \eqref{d4}, we obtain for $t \in (-T_1, -T_2)$ 
\begin{equation}\label{bn1}
\frac{\eta T^6}{(T_1+ t)^8} \leq 	C.
\end{equation}
Now  we note that the following equivalence holds\begin{align}
\notag &\sigma(x, t)^{-n} e^{\beta (\log \sigma(x, t))^2} \eta\leq 1\\
\notag &\iff n\log\sigma(x, t)-\beta (\log\sigma(x, t))^2-\log\eta\geq 0\\
\label{main-eq} &\iff n\log\sigma(x, t)-\beta (\log\sigma(x, t))^2+\frac{T^3(T_2+t)^4}{(T_1 +t)^3(T_1-T_2)^4}\geq 0.
\end{align}
The following holds true in $U$
\begin{equation*}
\frac{T_1 +t}{T} \leq \left(\frac{C}{2T^2}\right)^{1/8}\left(\frac{\sigma(x, t)}{\beta}\right)^{3/8}
 = C \left(\frac{\sigma(x, t)}{\beta}\right)^{3/8},
\end{equation*}
for some universal $C>0$. For sufficiently large  $\beta$ we have 
\[
C \left(\frac{\sigma(x, t)}{\beta}\right)^{3/8} \le C \left(\frac{R}{\beta}\right)^{3/8} \le \frac1{12}. 
\]Combining the above, we have
\begin{equation}\label{p1-bgg}
\frac{T_1 +t }{T} \leq \frac1{12},
\end{equation}
in $U$, provided $\beta$ is large enough. Also, $\frac T4 = T_1 - T_2 = T_1 + t + |T_2 + t|$, from \eqref{p1-bgg} we conclude that we must have in $U$
\[
|T_2 + t| \geq \frac{T}{6}.
\]
Invoking these information, we have
\begin{align}
\notag &n\log\sigma(x, t)-\beta (\log\sigma(x, t))^2+\frac{T^3(T_2+t)^4}{(T_1 +t)^3(T_1-T_2)^4}\\
\notag &\geq n\log\sigma(x, t)-\beta (\log\sigma(x, t))^2+\left(\frac{4}6\right)^{4} \left(\frac 2C\right)^{3/8} T^{3/4} \left(\frac{\beta}{\sigma(x, t)}\right)^{9/8}\\
&=\left(\frac{4}6\right)^{4} \left(\frac 2C\right)^{3/8} T^{3/4} \left(\frac{\beta}{\sigma(x, t)}\right)^{9/8}-n\log\frac{1}{\sigma(x, t)}-\beta \left(\log\frac{1}{\sigma(x, t)}\right)^2\geq 0,
\label{log-condn}
\end{align}
if  $\sigma$  is sufficiently small, and $\beta$ very large. We highlight the crucial role of the exponent $\beta^{9/8}$, which dominates the linear term in $\beta$ in order to achieve the required conclusion and also for small $\sigma$, $\frac{1}{\sigma^{9/8}}$ overpowers $\log\frac{1}{\sigma}$, as well as $(\log\frac{1}{\sigma})^2$. Therefore for large enough $\beta$ and small enough $R$, we find that \eqref{main-eq} is valid  and  consequently  the claim \eqref{thm-claim} holds. Now combining \eqref{expo-growth-1}, \eqref{expo-growth-2}, \eqref{expo-growth-3} and \eqref{thm-claim}, we obtain
\begin{align}
\label{carleman-3}
&\frac{\beta^3}{2} \int \sigma(x, t)^{-n} e^{\beta (\log \sigma(x, t))^2}v^2\\
&\notag\lesssim \sigma_{1}(r_1/2, r_1, T)^{-n}\eer\int_{\{r_1/4<|x|<3{r_1}/2\}\times (-T_1, T_1)} |u|^2\\
\notag &+\sigma_1(r_2, r_3, T)^{-n}\ees \int_{B_R\times (-T, T)} u^2\\
\notag &+C \int_{B_{R}\times(-T, T)} u^2.
\end{align}
Recall that there exist constants $M$ and $N$, depending on the ellipticity of the coefficient matrix $A$, such that 
\begin{align}
\label{comparable-sigma}
M|x|\leq \sigma(x,t)\leq N|x|.    
\end{align}
The integral in the left-hand side of \eqref{carleman-3} can be bounded from below in the following way
\begin{equation}\label{b5-bgg}
\frac{\beta^3}{2} \int \sigma(x, t)^{-n} e^{\beta (\log \sigma(x, t))^2}v^2\geq \frac{\beta^3}{2}\sigma_2(r_1, {Mr_2}/{N}, T)^{-n}e^{\beta (\log(\sigma_2(r_1, {Mr_2}/{N}, T))^2}\int_{\{r_1 < |x|< \frac{M r_2}{N} \} \times (-T_2,T_2)} u^2. 
\end{equation}
Substituting \eqref{b5-bgg} in \eqref{carleman-3}, and dividing both sides by $\sigma_2(r_1, {Mr_2}/{N}, T)^{-n}e^{\beta (\log(\sigma_2(r_1, {Mr_2}/{N}, T))^2}$, we obtain
\begin{align}
\label{carleman-4}&\frac{\beta^3}{2}\int_{\{r_1 < |x|< \frac{M r_2}{N} \} \times (-T_2,T_2)} u^2\\
\notag&\leq C \left(\frac{\sigma_2(r_1, Mr_2/N, T)}{\sigma_1(r_1/2, r_1, T)}\right)^n \eesr \int _{\{{r_1}/{4}< |x|< {3r_1}/{2}\}\times (-T_1, T_1)} u^2\\
\notag &+\left(\frac{\sigma_2(r_1, Mr_2/N, T)}{\sigma_1(r_2, r_3, T)}\right)^n \eelog \int _{B_{R}\times (-T, T)} u^2\\
\notag &+C\int _{B_{R}\times (-T, T)} u^2.
\end{align}

Keeping in mind \eqref{comparable-sigma}, it is easily seen that

\[
\eelog \leq e^{\beta\bigg[(\log Mr_2)^2-(\log Mr_2)^2\bigg]}\leq 1.\]

 Using this in \eqref{carleman-4} we deduce the following estimate
\begin{align}
\label{carleman-extra}
&\frac{\beta^3}{2}\int_{\{r_1 < |x|< \frac{M r_2}{N} \} \times (-T_2,T_2)} u^2\\
\notag &\lesssim_{M, N} \left(\frac{2 r_2}{r_1}\right)^n e^{\beta (\log c\,r_1)^2}\int _{\{{r_1}/{4}< |x|< {3r_1}/{2}\}\times (-T_1, T_1)} u^2\\
\notag &+ \tilde{C}\int _{B_{R}\times(-T, T)} u^2.
\end{align}

Adding $\frac{\beta^3}{2}\int_{B_{r_{1}}\times (-T_2, T_2)} u^2$ and choosing $R$ small enough such that $e^{\beta (\log\frac{r_1}{2})^2}\geq \frac{\beta^3}{2}$, we obtain
\begin{align}
\label{carleman-5}
\frac{\beta^3}{2}\int_{B_{\frac{M r_2}{N}} \times (-T_2,T_2)} u^2\leq & C \left(\frac{2 r_2}{r_1}\right)^n e^{\beta (\log c\,r_1)^2}\int _{B_{\frac{3r_1}{2}}\times (-T_1, T_1)} u^2\\
\notag &+\tilde{C}\int _{B_{R}\times (-T, T)} u^2.
\end{align}
Keeping in mind the assumption  \eqref{assume-bgg}, we can choose $\beta$ large enough such that
\begin{align}
\label{condn-3}
\frac{\beta^3}{8}\int_{B_{\frac{M r_2}{N}} \times (-T_2,T_2)} u^2\geq \tilde{C}\int _{B_{R}\times (-T, T)} u^2,    
\end{align}
where $\tilde{C}$ is the constant appearing in \eqref{carleman-5}. Subtracting off $\tilde{C}\int _{B_{R}\times (-T, T)} u^2$ from both sides of \eqref{carleman-5} reduces to following
\begin{align}
\label{carleman-6}    
e^{-\beta (\log\frac{c}{r_1})^2} \left(\frac{r_1}{2r_2}\right)^n\frac{\beta^3}{8}\int_{B_{\frac{M r_2}{N}} \times (-T_2,T_2)} u^2\leq \int _{B_{\frac{3r_1}{2}}\times (-T_1, T_1)} u^2.
\end{align}
Now we fix some $\beta$, sufficiently large, for which \eqref{p1-bgg}, \eqref{log-condn}, and \eqref{condn-3} hold simultaneously. Therefore, \eqref{carleman-6} implies that for sufficiently small $s$, there is a constant $\kappa$, depending on $r_2, r_3,R, \beta$ and the ratio $\frac{\int_{  B_R \times (-T, T)  } u^2}{\int_{ B_{\frac{Mr_2}{N}} \times (-T_2,T_2)} u^2}$ such that
\begin{align*}
\int_{B_{s}\times (-T, T)}u^2\geq C e^{-\kappa(\log\frac{2}{s})^2}.
\end{align*}
This contradicts \eqref{po}. Thus we conclude that $u\equiv 0$.
   \end{proof}

\end{document}